\newcommand{\mcal}{\mathcal}
\newcommand{\set}[1]{\left\{ #1 \right\}}
\newcommand{\C}{\mathbb{C}}
\newcommand{\R}{\mathbb{R}}
\newcommand{\Z}{\mathbb{Z}}
\newcommand{\N}{\mathbb{N}}
\newcommand{\f}{\infty}
\newcommand{\wh}[1]{\widehat{#1}}
\newcommand{\wt}[1]{\widetilde{#1}}
\newcommand{\ep}{\varepsilon}
\newcommand{\sse}{\subseteq}
\newcommand{\sm}{\setminus}
\newcommand{\D}{\;\mathrm{d}}
\newcommand{\cv}[2]{
\left(
  \begin{array}{c}
    #1 \\
    #2 \\
  \end{array}
\right)
}
\newtheorem{theorem}{Theorem}[section]
\newtheorem{lemma}[theorem]{Lemma}
\newtheorem{definition}[theorem]{Definition}
\newtheorem{corollary}[theorem]{Corollary}
{
  \theoremstyle{definition}
  \newtheorem{example}[theorem]{Example}
}
\theoremstyle{remark}
\newtheorem*{remark}{Remark}
\numberwithin{equation}{section}
\title{The spectrality of infinite convolutions in $\mathbb{R}^d$}
\author[W. Li]{Wenxia Li}
\address[W. Li]{School of Mathematical Sciences, Key Laboratory of MEA (Ministry of Education) \& Shanghai Key Laboratory of PMMP, East China Normal University, Shanghai 200241, People's Republic of China}
\email{wxli@math.ecnu.edu.cn}
\author[Z. Wang]{Zhiqiang Wang*}
\address[Z. Wang]{School of Mathematical Sciences, Key Laboratory of MEA (Ministry of Education) \& Shanghai Key Laboratory of PMMP, East China Normal University, Shanghai 200241, People's Republic of China}
\email{zhiqiangwzy@163.com}
\subjclass[2020]{28A80, 42B05, 42C30}
\thanks{* Corresponding author}
\begin{document}

\begin{abstract}
In this paper, we study the spectrality of infinite convolutions in $\mathbb{R}^d$, where the spectrality means the corresponding square integrable function space admits a family of exponential functions as an orthonormal basis. Suppose that the infinite convolutions are generated by a sequence of admissible pairs in $\mathbb{R}^d$.
We give two sufficient conditions for their spectrality by using the equi-positivity condition and the integral periodic zero set of Fourier transform.
By applying these results, we show the spectrality of some specific infinite convolutions in $\mathbb{R}^d$.
\end{abstract}

\keywords{spectral measure, infinite convolution, admissible pair, equi-positivity}

\maketitle

\section{Introduction}

Let $\mcal{P}(\R^d)$ be the collection of Borel probability measures on $\R^d$.  We call $\mu\in \mcal{P}(\R^d)$ a \emph{spectral measure} if there exists a countable subset $\Lambda \sse \R^d$ such that the family of exponential functions
$$\set{ e_\lambda(x) = e^{2\pi i \lambda \cdot x}: \lambda \in \Lambda}$$
forms an orthonormal basis for $L^2(\mu)$, where $\cdot$ denotes the standard inner product on $\R^d$. The set $\Lambda$ is called a \emph{spectrum} of $\mu$. Such orthonormal bases are used for Fourier series expansions of functions \cite{Strichartz-2006}, therefore the existence of spectrum is a fundamental question in harmonic analysis.

In 1974, Fuglede related the existence of commuting self-adjoint partial differential operators to the spectrality and proposed the following well-known spectral set conjecture  in~\cite{Fuglede-1974}.
\begin{quote}
  \emph{A measurable set $\Gamma \sse \R^d$ with positive finite Lebesgue measure is a spectral set, that is, the normalized Lebesgue measure on $\Gamma$ is a spectral measure, if and only if $\Gamma$ tiles $\R^d$ by translations.}
\end{quote}
This conjecture has been refuted by Tao \cite{Tao-2004} and the others \cite{Kolountzakis-Matolcsi-2006b,Matolcsi-2005} in $d$-dimensional spaces with $d\ge 3$, but the study of the connection between spectrality and tiling has raised a great deal of interest, see \cite{Fan-Fan-Liao-Shi-2019,Iosevich-Katz-Tao-2003,Laba-2001,
Lev-Matolcsi-2019}.

Fractal measures usually appear as singular measures with respect to classical Lebesgue measures, we refer the readers to~\cite{Falco03} for details on fractal geometry.
In \cite{Jorgensen-Pedersen-1998}, Jorgensen and Pedersen discovered that the self-similar measure defined by
$$\mu(\;\cdot\;) = \frac{1}{2} \mu(4 \;\cdot\;) + \frac{1}{2} \mu( 4 \;\cdot\; -2)$$
is a spectral measure, but the standard middle-third Cantor measure is not. From then on, the spectrality of  fractal measures has been extensively investigated, and we refer the readers to \cite{An-Dong-He-2022,An-Fu-Lai-2019,An-He-2014,An-He-He-2019,An-Wang-2021,An-He-Tao-2015,Dai-2012,Dai-Fu-Yan-2021,
Dai-He-Lau-2014,Deng-Chen-2021,Dutkay-Han-Sun-2009,Dutkay-Han-Sun-Weber-2011,Dutkay-Haussermann-Lai-2019,Dutkay-Lai-2014,
Fu-Wen-2017,He-Lai-Lau-2013,He-Tang-Wu-2019,Jorgensen-Pedersen-1998,Laba-Wang-2002,Laba-Wang-2006,Li-2009,LMW21,LMW22,
Liu-Dong-Li-2017,Lu-Dong-Zhang-2022,Shi-2021,Strichartz-2000}.
In this paper, we study the spectrality of infinite convolutions in $\R^d$ which may be regarded as a generalization of self-affine measures.

We write $\mcal{M}(\R^d)$ for the collection of all finite nonzero Borel measures on $\R^d$.
Note that $\mcal{P}(\R^d) \subseteq \mcal{M}(\R^d)$.
For $\mu,\nu \in \mcal{M}(\R^d)$, the convolution $\mu *\nu$ is given by $$ \mu*\nu(B) = \int_{\R^d} \nu(B-x) \D \mu(x)= \int_{\R^d} \mu(B-y) \D \nu(y), $$
for every Borel subset $B\sse \R^d$. For a finite subset $A \sse \R^d$, we define the discrete measure
$$\delta_A = \frac{1}{\# A} \sum_{a\in A} \delta_a, $$
where $\#$ denotes the cardinality of a set and $\delta_a$ denotes the Dirac measure concentrated on the point $a$.

Let $R\in M_d(\Z)$ be a $d\times d$ expanding integral matrix, that is, all eigenvalues have modulus strictly greater than $1$, and let $B\sse \Z^d$ be a finite subset of integral vectors.
If there exists a finite subset $L\sse \Z^d$ such that $\#L =\#B$ and the matrix
$$
\left[ \frac{1}{\sqrt{\# B}} e^{-2\pi i (R^{-1}b) \cdot \ell} \right]_{b\in B,\ell \in L}
$$
is unitary, we call $(R, B)$ an {\it admissible pair} in $\R^d$. Sometimes, to emphasize $L$, we call $(R, B, L)$ a  {\it Hadamard triple}.
It is easy to verify that $(R, B)$ is an admissible pair if and only if the discrete measure $\delta_{R^{-1} B}$ admits a spectrum $L \sse \Z^d$.

Given a sequence of admissible pairs $\{(R_n,B_n)\}_{n=1}^\infty$ in $\R^d$, let
\begin{equation}\label{ms_mun}
\mu_n = \delta_{R_1^{-1} B_1} * \delta_{(R_2 R_1)^{-1} B_2} * \cdots * \delta_{(R_n \cdots R_2 R_1)^{-1} B_n}.
\end{equation}
We assume that the weak limit of $\{\mu_n\}$ exists, and the weak limit $\mu$ is called an \emph{infinite convolution}, written as
\begin{equation}\label{infinite-convolution}
  \mu=\delta_{R_1^{-1} B_1} * \delta_{(R_2 R_1)^{-1} B_2} * \cdots * \delta_{(R_n \cdots R_2 R_1)^{-1} B_n} * \cdots.
\end{equation}
Some sufficient and necessary conditions for the existence of infinite convolutions were given in \cite{Li-Miao-Wang-2022}.

The admissible pair assumption implies that all discrete measures $\{\mu_n\}$ are spectral measures.
Therefore, a natural question arises as the following.
\begin{quote}
  \emph{Given a sequence of admissible pairs $\{(R_n,B_n)\}_{n=1}^\f$, under what condition is the infinite convolution $\mu$  a spectral measure ?}
\end{quote}
In fact, it is easy to construct an infinite mutually orthogonal set of exponential functions, but it is very difficult to show the completeness of the orthogonal set. The infinite convolution generated by admissible pairs was first studied by Strichartz \cite{Strichartz-2000} to find more spectral measures.
The admissible pair condition is not enough to guarantee that infinite convolutions are spectral measures, see \cite{An-He-He-2019,LMW21} for counterexamples.
If $(R_n,B_n) =(R,B)$ for all $n \ge 1$, then the infinite convolution $\mu$ is reduced to the self-affine measure.
Dutkay, Haussermann and Lai \cite{Dutkay-Haussermann-Lai-2019} have proved that the self-affine measures generated by an admissible pair is a spectral measure.
We refer the readers to
\cite{An-Fu-Lai-2019,An-He-2014,An-He-He-2019,An-He-Tao-2015,Dai-2012,Dai-Fu-Yan-2021,Dai-He-Lau-2014,Fu-Wen-2017,
Laba-Wang-2002,Li-2009,LMW21,LMW22,Liu-Dong-Li-2017,Lu-Dong-Zhang-2022}
for other related results on spectrality of infinite convolutions.
Currently, most of the research on spectrality of infinite convolutions focus on either the one-dimensional case or some special examples in $\R^d$.

We denote the tail of the infinite convolution $\mu$ by
$$\mu_{>n} = \delta_{(R_{n+1} \cdots R_2 R_1)^{-1} B_{n+1}} * \delta_{(R_{n+2} \cdots R_2 R_1)^{-1} B_{n+2}} * \cdots .$$
Clearly, we have $\mu = \mu_n * \mu_{>n}$.
We also define a push-forward measure of $\mu_{>n}$ by
\begin{equation}\label{ms_nugen}
  \nu_{>n} = \mu_{>n} \circ (R_n \cdots R_2 R_1)^{-1} =\delta_{R_{n+1}^{-1} B_{n+1}} * \delta_{(R_{n+2} R_{n+1})^{-1} B_{n+2}} * \cdots.
\end{equation}
The spectrality of the infinite convolution $\mu$ relies on the properties of $\{\nu_{>n}\}$.
The equi-positive condition and the integral periodic zero set were first used in \cite{Dutkay-Haussermann-Lai-2019} to study the spectrality of self-affine measures.

First, provided a technical condition,
we show that the existence of an equi-positive family (see Definition \ref{def-equipositive}) is sufficient for the spectrality of infinite convolutions in $\R^d$ (also see Theorem \ref{theorem-spectrality-2}).
Analogous results in $\R$ can be found in \cite{An-Fu-Lai-2019,Lu-Dong-Zhang-2022,LMW21}.

\begin{theorem}\label{theorem-spectrality}
  Let $\{(R_n,B_n)\}_{n=1}^\infty$ be a sequence of admissible pairs in $\R^d$.
  Suppose that the infinite convolution $\mu$ defined in \eqref{infinite-convolution} exists, and
  \[ \lim_{n \to \f} \|R_1^{-1} R_2^{-1} \cdots R_n^{-1}\| = 0. \]
  Let $\{\nu_{>n}\}$ be defined in \eqref{ms_nugen}.
  If there exists a subsequence  $\{ \nu_{>n_j} \}$ which is an equi-positive family,  then $\mu$ is a spectral measure with a spectrum in $\Z^d$.
\end{theorem}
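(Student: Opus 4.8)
The plan is to build a candidate spectrum $\Lambda \sse \Z^d$ by a greedy/compactness argument and then prove completeness in $L^2(\mu)$ using the equi-positivity of the subsequence $\{\nu_{>n_j}\}$. First I would set up notation: write $P_n = R_n \cdots R_2 R_1$, so that $\mu = \mu_n * \mu_{>n}$ with $\mu_{>n} = \nu_{>n}\circ P_n$. Since each $(R_n,B_n)$ is an admissible pair with spectrum $L_n \sse \Z^d$ for $\delta_{R_n^{-1}B_n}$, the finite convolutions $\mu_n$ are spectral measures with spectrum $\Lambda_n = L_1 + R_1^{\top} L_2 + \cdots + (R_{n-1}\cdots R_1)^{\top} L_n \sse \Z^d$ (the standard product-of-Hadamard-triples computation). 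The orthogonality of $\{e_\lambda : \lambda \in \Lambda_n\}$ in $L^2(\mu_n)$ transfers to $L^2(\mu)$ because $\widehat{\mu} = \widehat{\mu_n}\cdot\widehat{\mu_{>n}}$ and the zero set of $\widehat{\mu_n}$ is contained in that of $\widehat{\mu}$; so the $\Lambda_n$ form a nested (or cofinal) family of orthonormal sets, and I would take $\Lambda$ to be a suitable limit of them, chosen maximal among orthonormal exponential sets extending $\bigcup_j \Lambda_{n_j}$.

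The core of the argument is completeness, and the standard device is the function
\[
Q(\xi) = Q_\Lambda(\xi) = \sum_{\lambda \in \Lambda} |\widehat{\mu}(\xi + \lambda)|^2,
\]
which by Bessel's inequality satisfies $Q(\xi) \le 1$ for all $\xi$, with $\Lambda$ being a spectrum if and only if $Q \equiv 1$. I would argue by contradiction: if $\Lambda$ is not a spectrum, then (using maximality of $\Lambda$ and a standard argument, e.g. as in the work of Dutkay–Haussermann–Lai or An–Fu–Lai) there is a point $\xi_0$ with $Q(\xi_0) < 1$, and in fact one can find such a $\xi_0$ with $Q$ bounded away from $1$ on a neighborhood. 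The factorization $\widehat{\mu}(\xi) = \widehat{\mu_{n_j}}(\xi)\,\widehat{\mu_{>n_j}}(\xi) = \widehat{\mu_{n_j}}(\xi)\,\widehat{\nu_{>n_j}}((P_{n_j}^{-1})^{\top}\xi)$ lets me split $Q$ into a sum over the finite spectrum $\Lambda_{n_j}$ (which is complete for $\mu_{n_j}$, so the corresponding partial sum of $|\widehat{\mu_{n_j}}|^2$ over translates equals $1$) times values of $|\widehat{\nu_{>n_j}}|^2$ near the origin. The hypothesis $\|R_1^{-1}\cdots R_n^{-1}\| \to 0$ forces $(P_{n_j}^{-1})^{\top}\xi \to 0$ uniformly on compacta, so these arguments concentrate near $0$; equi-positivity then gives a uniform lower bound $|\widehat{\nu_{>n_j}}(\eta)|^2 \ge \varepsilon_0 > 0$ for $\eta$ in a fixed neighborhood of $0$ and all $j$. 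Feeding this back shows $Q(\xi_0)$ cannot be bounded below $1$ — contradiction — so $Q \equiv 1$ and $\Lambda$ is a spectrum.

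The main obstacle, and the place where the technical condition $\|R_1^{-1}\cdots R_n^{-1}\|\to 0$ and the equi-positivity are both essential, is controlling the "low-frequency" contribution uniformly in $j$: one must show that the mass that $Q_\Lambda$ could be missing at $\xi_0$ is captured, after passing through the finite spectra $\Lambda_{n_j}$, by exponentials whose frequencies land in the small neighborhood of $0$ where equi-positivity applies — and crucially that these new frequencies, pulled back via $P_{n_j}^{\top}$, lie in $\Z^d$ and are genuinely orthogonal to everything already in $\Lambda$, so that they can be adjoined, contradicting maximality. Making the pullback land in $\Z^d$ uses $B_n, L_n \sse \Z^d$ and $R_n \in M_d(\Z)$; making the quantitative estimate uniform uses that equi-positivity provides a single neighborhood and a single positive lower bound working for the entire subsequence. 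I would carry this out by the now-standard scheme (compare Theorem \ref{theorem-spectrality-2} and the references \cite{An-Fu-Lai-2019,Dutkay-Haussermann-Lai-2019,LMW21}), adapted to the non-autonomous, higher-dimensional setting here.
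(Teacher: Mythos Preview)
Your proposal has a genuine gap: you have misread what equi-positivity gives you, and consequently the completeness step does not go through as sketched.

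You write that ``equi-positivity then gives a uniform lower bound $|\widehat{\nu_{>n_j}}(\eta)|^2 \ge \varepsilon_0 > 0$ for $\eta$ in a fixed neighborhood of $0$,'' and later that one must arrange for the relevant frequencies to ``land in the small neighborhood of $0$ where equi-positivity applies.'' But look again at Definition~\ref{def-equipositive}: equi-positivity is a statement about \emph{every} $x\in[0,1)^d$, and its content is the existence of integer shifts $k_{x,\mu}\in\Z^d$ with $|\widehat{\mu}(x+y+k_{x,\mu})|\ge\varepsilon$ for $|y|<\gamma$. The case $x=\mathbf{0}$ (where $k=\mathbf{0}$) is only a small part of the hypothesis. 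The points $(P_{n_j}^{-1})^{\top}(\xi+\lambda)$ with $\lambda\in\Lambda_{n_j}$ are \emph{not} concentrated near the origin; they are spread throughout $[0,1)^d$ (modulo $\Z^d$), so a lower bound valid only near $0$ is useless for controlling the full sum. Your maximality/contradiction sketch never invokes the shifts $k_{x,\mu}$, and without them there is no mechanism to produce the uniform lower bound you need.

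The paper's proof uses these shifts in an essential way, and it does so \emph{constructively} rather than via maximality. It does not take $\Lambda=\bigcup_j\Lambda_{n_j}$ with the naive $\Lambda_{n_j}$; instead it builds $\Lambda_j$ inductively by
\[
\Lambda_j = \Lambda_{j-1} + \mathbf{R}_{0,m_{j-1}}^{\mathrm{T}}\bigl\{\lambda + \mathbf{R}_{m_{j-1},m_j}^{\mathrm{T}} k_{\lambda,j} : \lambda\in\mathbf{L}_{m_{j-1},m_j}\bigr\},
\]
where the integers $k_{\lambda,j}$ are exactly the shifts supplied by equi-positivity, chosen so that $|\widehat{\nu_{>m_j}}|\ge\varepsilon$ at the relevant points. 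Lemma~\ref{lemma-HT}(ii) guarantees this modified set is still a spectrum of $\mu_{m_j}$. With the spectrum built this way, one gets $|\widehat{\mu}(\xi+\lambda)|^2 \ge \varepsilon^2 |\widehat{\mu_{m_j}}(\xi+\lambda)|^2$ for every $\lambda\in\Lambda_j$, and the proof closes by dominated convergence (the dominating function being $\varepsilon^{-2}|\widehat{\mu}(\xi+\cdot)|^2$), not by a maximality contradiction. The missing idea in your outline is precisely this: the candidate spectrum must be \emph{twisted} by the equi-positivity shifts layer by layer, and this is what converts the hypothesis into a uniform domination.
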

\begin{remark}
  In Theorem \ref{theorem-spectrality}, the support of the infinite convolution $\mu$ could be noncompact. The authors and Miao \cite{Li-Miao-Wang-2022} constructed a class of singular spectral measures without compact support by showing equi-positivity in $\R$.
  Analogous examples can be constructed in $\R^d$ by using Theorem \ref{theorem-spectrality}.
\end{remark}

For $\mu \in \mcal{P}(\R^d)$, we write
\[ \mcal{Z}(\mu) = \set{\xi \in \R^d: \wh{\mu}(\xi+k) = 0 \text{ for all } k \in \Z^d} \]
for the \emph{integral periodic zero set} of Fourier transform of $\mu$.
Next we characterize the spectrality by using the integral periodic zero set (also see Theorem \ref{theorem-spectrality-emptyset-2}).
The following Theorem \ref{theorem-spectrality-emptyset} and Theorem \ref{theorem-emptyset} in $\R$ have been proved in \cite{LMW22}.

\begin{theorem}\label{theorem-spectrality-emptyset}
  Let $\{(R_n,B_n)\}_{n=1}^\infty$ be a sequence of admissible pairs in $\R^d$.
  Suppose that the infinite convolution $\mu$ defined in \eqref{infinite-convolution} exists, and
  \[ \lim_{n \to \f} \|R_1^{-1} R_2^{-1} \cdots R_n^{-1}\| = 0. \]
  Let $\{\nu_{>n}\}$ be defined in \eqref{ms_nugen}.
  If there exists a subsequence  $\{ \nu_{>n_j} \}$ which converges weakly to $\nu$ with $\mathcal{Z}(\nu) =\emptyset$, then $\mu$ is a spectral measure with a spectrum in $\Z^d$.
\end{theorem}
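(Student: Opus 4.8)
The plan is to deduce Theorem~\ref{theorem-spectrality-emptyset} from Theorem~\ref{theorem-spectrality}. All hypotheses of the latter other than the existence of an equi-positive subsequence are already in force here — the existence of $\mu$ and the contraction $\|R_1^{-1}R_2^{-1}\cdots R_n^{-1}\|\to 0$ — so it suffices to show that, after passing to a further tail, $\{\nu_{>n_j}\}$ contains an equi-positive family, and then to apply Theorem~\ref{theorem-spectrality} to that subsequence.

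The crux is the following consequence of $\mathcal{Z}(\nu)=\emptyset$. I would consider $g(\xi)=\sup_{k\in\Z^d}|\wh{\nu}(\xi+k)|$, which is $\Z^d$-periodic and, being a supremum of continuous functions, lower semicontinuous; since $\mathcal{Z}(\nu)=\emptyset$ it is strictly positive everywhere, hence attains a strictly positive minimum $\ep_0$ on the compact cube $[0,1]^d$. For each $\xi\in[0,1]^d$ one may pick $k_\xi\in\Z^d$ with $|\wh{\nu}(\xi+k_\xi)|>\ep_0/2$; by continuity this inequality persists on an open neighbourhood of $\xi$, and by compactness finitely many such neighbourhoods cover $[0,1]^d$. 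Collecting the corresponding $k_\xi$ yields a \emph{finite} set $K\sse\Z^d$ with $\max_{k\in K}|\wh{\nu}(\xi+k)|>\ep_0/2$ for every $\xi\in[0,1]^d$.

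Next I would transfer this estimate to the $\nu_{>n_j}$. Weak convergence $\nu_{>n_j}\to\nu$ implies $\wh{\nu_{>n_j}}\to\wh{\nu}$ uniformly on compact subsets of $\R^d$ (a weakly convergent sequence in $\mathcal{P}(\R^d)$ is tight, hence its Fourier transforms are equicontinuous), in particular on the bounded set $E=\{\xi+k:\xi\in[0,1]^d,\ k\in K\}$. Choosing $J$ so that $|\wh{\nu_{>n_j}}(\eta)-\wh{\nu}(\eta)|<\ep_0/4$ for all $\eta\in E$ and all $j\ge J$, then for every $\xi\in[0,1]^d$ and every $j\ge J$, picking $k\in K$ with $|\wh{\nu}(\xi+k)|>\ep_0/2$ gives $|\wh{\nu_{>n_j}}(\xi+k)|>\ep_0/4$, hence $\sup_{k\in\Z^d}|\wh{\nu_{>n_j}}(\xi+k)|>\ep_0/4$; by $\Z^d$-periodicity of this quantity it extends to every $\xi\in\R^d$. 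Thus the tail $\{\nu_{>n_j}\}_{j\ge J}$ is an equi-positive family in the sense of Definition~\ref{def-equipositive}, and being a subsequence of $\{\nu_{>n}\}$, Theorem~\ref{theorem-spectrality} applies and yields that $\mu$ is a spectral measure with a spectrum in $\Z^d$.

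The step I expect to be the main obstacle is precisely the passage from the pointwise fact $\mathcal{Z}(\nu)=\emptyset$ to a \emph{uniform} lower bound realised within a \emph{finite} index set $K$: lower semicontinuity together with compactness of a fundamental domain are the essential tools, and the finiteness of $K$ is what makes the transfer via weak convergence — which controls Fourier transforms only on bounded sets — legitimate. The remaining bookkeeping, namely matching the estimate obtained above with the precise wording of Definition~\ref{def-equipositive}, is routine; if that definition is phrased with a fixed finite translation set, then $K$ already supplies it, and if it is stated on a fundamental domain only, the final periodicity remark can be omitted.
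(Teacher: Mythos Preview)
Your proposal is correct and follows essentially the same route as the paper: show that a tail of $\{\nu_{>n_j}\}$ is equi-positive via a compactness argument on $[0,1]^d$ together with uniform convergence of the Fourier transforms on bounded sets, then invoke Theorem~\ref{theorem-spectrality}. The two items you defer as ``routine bookkeeping''---the uniform neighbourhood radius $\gamma$ required in Definition~\ref{def-equipositive} and the special condition $k_{\mathbf{0},\mu}=\mathbf{0}$---are handled explicitly in the paper (the former via the radii of the finite cover, the latter via $\wh{\nu}_{>n_j}(\mathbf{0})=1$ and continuity), and are indeed straightforward given your finite set $K$.
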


Generally, it is a challenging problem to compute the integral periodic zero set of Fourier transform.
We provide a sufficient condition for the integral periodic zero set of Fourier transform to be empty (also see Theorem \ref{theorem-emptyset-2}).

\begin{theorem}\label{theorem-emptyset}
Let $\mu \in \mathcal{P}(\R^d)$.
Suppose there exists a Borel subset $E \sse \R^d$ such that $\mu(E)>0$, and $$ \mu( E+k ) =0 $$ for all $k \in \Z^d\setminus\{{\bf 0}\}$.
Then we have $\mcal{Z}(\mu) = \emptyset$.
\end{theorem}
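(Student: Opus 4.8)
The plan is to argue by contradiction: assume $\mcal{Z}(\mu)\neq\emptyset$ and fix $\xi\in\mcal{Z}(\mu)$, so that $\wh{\mu}(\xi+k)=0$ for every $k\in\Z^d$ (recall $\wh{\mu}(\eta)=\int_{\R^d}e^{-2\pi i\eta\cdot x}\D\mu(x)$; the sign convention is irrelevant below). The strategy is to use this vanishing along the whole coset $\xi+\Z^d$ to annihilate the complex measure $\D\nu(x):=e^{-2\pi i\xi\cdot x}\D\mu(x)$ on every $\Z^d$-periodic Borel set, and then to exploit the near-disjointness of the translates $E+k$ to force $\mu(E)=0$.

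First I would note that for every $k\in\Z^d$,
$\int_{\R^d}e^{2\pi i k\cdot x}\D\nu(x)=\int_{\R^d}e^{-2\pi i(\xi-k)\cdot x}\D\mu(x)=\wh{\mu}(\xi-k)=0$,
so $\int p\,\D\nu=0$ for every trigonometric polynomial $p$ with frequencies in $\Z^d$. Since such polynomials are uniformly dense in the continuous $\Z^d$-periodic functions (Stone--Weierstrass on $\R^d/\Z^d$) and $\mu$ is a finite measure, dominated convergence gives $\int h\,\D\nu=0$ for every continuous $\Z^d$-periodic $h$; a routine Dynkin $\pi$--$\lambda$ argument (the bounded $\Z^d$-periodic Borel functions annihilated by $\nu$ form a vector space closed under bounded pointwise limits and contains all continuous periodic functions, and indicators of open sets are bounded monotone limits of such functions) then upgrades this to all bounded $\Z^d$-periodic Borel functions. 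Equivalently, the push-forward of $\nu$ to $\R^d/\Z^d$ is a finite complex Borel measure all of whose Fourier coefficients vanish, hence is the zero measure. In particular, for any Borel set $A\sse\R^d$ its periodization $\wt{A}:=\bigcup_{k\in\Z^d}(A+k)$ is a $\Z^d$-periodic Borel set with $\int_{\wt{A}}e^{-2\pi i\xi\cdot x}\D\mu(x)=0$.

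Next I would feed in the hypothesis on $E$. For an arbitrary Borel set $A\sse E$ we have $\mu(A+k)\le\mu(E+k)=0$ for every $k\in\Z^d\sm\{\mathbf{0}\}$, hence $\mu(\wt{A}\sm A)\le\sum_{k\neq\mathbf{0}}\mu(A+k)=0$, so that
$\int_A e^{-2\pi i\xi\cdot x}\D\mu(x)=\int_{\wt{A}}e^{-2\pi i\xi\cdot x}\D\mu(x)=0$.
Thus the bounded function $x\mapsto e^{-2\pi i\xi\cdot x}$ has vanishing $\mu$-integral over every Borel subset of $E$. Since $|e^{-2\pi i\xi\cdot x}|=1$, for each $x\in E$ at least one of $\pm\,\mrm{Re}\,e^{-2\pi i\xi\cdot x}$ or $\pm\,\mrm{Im}\,e^{-2\pi i\xi\cdot x}$ is $\ge 1/2$; testing the displayed identity against each of the (at most) four corresponding Borel subsets of $E$ and taking real or imaginary parts forces each to be $\mu$-null, hence $\mu(E)=0$. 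This contradicts $\mu(E)>0$, and the contradiction proves $\mcal{Z}(\mu)=\emptyset$.

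There is no deep obstacle here; the argument is soft. The only points that need a little care are the passage from trigonometric polynomials to general bounded periodic Borel functions (equivalently, uniqueness of a finite complex measure on $\R^d/\Z^d$ from its Fourier coefficients) and the verification that the periodization $\wt{A}$ is Borel and descends to a Borel set of the torus (it equals $\pi^{-1}(S)$ for $S$ the image of $\wt{A}\cap[0,1)^d$ under the Borel isomorphism $\pi|_{[0,1)^d}\colon[0,1)^d\to\R^d/\Z^d$); both are standard.
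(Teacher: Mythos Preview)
Your proof is correct and shares its core with the paper's: both define the complex measure $\D\nu=e^{-2\pi i\xi\cdot x}\D\mu$, observe that its Fourier coefficients along $\Z^d$ all vanish, and push this down to the torus to conclude (via the uniqueness theorem the paper states as Theorem~\ref{thm_fcof}) that $\nu$ annihilates every $\Z^d$-periodic Borel set. Where you diverge is in the endgame. The paper applies this only to the periodization of $E$ itself, obtaining $\int_E e^{-2\pi i\xi\cdot x}\D\mu=0$; it then needs a second pass: after first reducing to $E\sse[0,1)^d$, it normalizes $\mu|_E$ to a probability measure on the torus, shows that \emph{its} twisted Fourier coefficients also vanish (using that $\xi+k\in\mcal{Z}(\mu)$ for every $k$), and invokes Fourier uniqueness a second time to force $\mu|_E=0$. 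Your move is sharper: by letting the test set $A$ range over all Borel subsets of $E$, you get $\int_A e^{-2\pi i\xi\cdot x}\D\mu=0$ for every such $A$ in one stroke, which says directly that the restriction of the complex measure $\nu$ to $E$ is zero; since the density has modulus one, $\mu(E)=0$ follows by your four-set partition (or equivalently by $|\nu|_{|E}=\mu|_E$). This avoids both the preliminary reduction $E\sse[0,1)^d$ and the second application of Fourier uniqueness, at the small cost of the extra sentence justifying that periodizations of Borel sets are Borel.
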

\begin{remark}
  Assume that $\mu \in \mathcal{P}(\R^d)$ has a compact support, denoted by $\mathrm{spt}(\mu)$.
  If there exists $x_0 \in \mathrm{spt}(\mu)$ such that $x_0 + k \notin \mathrm{spt}(\mu)$ for all $k \in \Z^d\setminus\{{\bf 0}\}$, then we can conclude that $\mcal{Z}(\mu) = \emptyset$.
  This is because by the compactness of $\mathrm{spt}(\mu)$, there exists a sufficiently small open neighborhood $U$ of $x_0$ such that $(U+k) \cap \mathrm{spt}(\mu) = \emptyset$ for all $k \in \Z^d\setminus\{{\bf 0}\}$.
  Then the set $E=U$ is desired in Theorem \ref{theorem-emptyset}.
\end{remark}

In the following, we apply Theorem \ref{theorem-spectrality-emptyset} and Theorem \ref{theorem-emptyset} to show the spectrality of some infinite convolutions in $\R^d$.
First, we state a general result (also see Theorem \ref{theorem-general-spectrality-2}).

\begin{theorem}\label{theorem-general-spectrality}
  Let $\{(R_n,B_n)\}_{n=1}^\infty$ be a sequence of admissible pairs in $\R^d$.
  Suppose that

  {\rm(i)} \[\lim_{n \to \f} \|R_1^{-1} R_2^{-1} \cdots R_n^{-1}\| = 0,\]

  \noindent
  and there exists a cube $C= t_0 + [0,1]^d$ for some $t_0 \in \R^d$ such that

  {\rm(ii)} for each $n \ge 1$, we have $R_n^{-1}(C + b) \subseteq C$ for every $b \in B_n$,

  {\rm(iii)} there exists an admissible pair $(R,B)$, which occurs infinitely times in the sequence $\{(R_n,B_n)\}_{n=1}^\f$, such that $R^{-1}(C + b_0) \subseteq \mathrm{int}(C)$ for some $b_0 \in B$.

  \noindent
  Then the infinite convolution $\mu$ defined in \eqref{infinite-convolution} exists and is a spectral measure with a spectrum in $\Z^d$.
\end{theorem}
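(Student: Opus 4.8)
My plan is to verify the hypotheses of Theorem~\ref{theorem-spectrality-emptyset}. Condition (i) is precisely its decay assumption; the existence of $\mu$ will follow from (i) and (ii); and the heart of the matter is to exhibit a subsequence of $\{\nu_{>n}\}$ converging weakly to some $\nu$ with $\mcal{Z}(\nu)=\emptyset$. This last point will be handled by Theorem~\ref{theorem-emptyset}, with the required Borel set furnished by the strictly contracting map appearing in (iii).

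Write $f_{n,b}(x)=R_n^{-1}(x+b)$, so that (ii) reads $f_{n,b}(C)\sse C$ for all $n\ge1$ and $b\in B_n$. Then $R_n^{-1}$ maps the difference set $C-C$ into itself, so $\|R_n^{-1}\|$, and hence the digit set $R_n^{-1}B_n$, is bounded uniformly in $n$; together with (i), this yields the existence of $\mu$ (cf.~\cite{Li-Miao-Wang-2022}). Unwinding \eqref{ms_nugen} gives the self-affine-type recursion
\[ \nu_{>n}=\frac{1}{\#B_{n+1}}\sum_{b\in B_{n+1}}\nu_{>n+1}\circ f_{n+1,b}^{-1}=\delta_{R_{n+1}^{-1}B_{n+1}}*(\nu_{>n+1}\circ R_{n+1}). \]
Moreover, the $m$-th partial convolution of the series \eqref{ms_nugen} is supported on the set of points $f_{n+1,b_{n+1}}\circ\cdots\circ f_{m,b_m}(\mathbf{0})$ with $b_i\in B_i$; comparing such a point with $f_{n+1,b_{n+1}}\circ\cdots\circ f_{m,b_m}(c)\in C$ for a fixed $c\in C$, their distance is at most $\|R_{n+1}^{-1}\cdots R_m^{-1}\|\sup_{c\in C}|c|\to0$ as $m\to\f$ by (i). Hence $\mathrm{spt}(\nu_{>n})\sse C$ for every $n$, so $\{\nu_{>n}\}_n$ is a precompact family of probability measures on the compact cube $C$.

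Let $(R,B)$ and $b_0\in B$ be as in (iii), and let $p_1<p_2<\cdots$ be the indices with $(R_{p_k},B_{p_k})=(R,B)$; put $m_k=p_k-1$. By precompactness, after passing to a subsequence we may assume $\nu_{>m_k}\to\nu$ and $\nu_{>m_k+1}\to\eta$ weakly, with $\nu,\eta$ supported in $C$. Since convolution with the fixed measure $\delta_{R^{-1}B}$ and composition with the fixed matrix $R$ preserve weak convergence, letting $k\to\f$ in $\nu_{>m_k}=\delta_{R^{-1}B}*(\nu_{>m_k+1}\circ R)$ gives
\[ \nu=\delta_{R^{-1}B}*(\eta\circ R)=\frac{1}{\#B}\sum_{b\in B}\eta\circ f_b^{-1},\qquad f_b(x)=R^{-1}(x+b). \]
Set $E=R^{-1}(C+b_0)=f_{b_0}(C)$. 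Because $f_{b_0}$ is an affine bijection of $\R^d$, the $b=b_0$ summand already gives $\nu(E)\ge\frac{1}{\#B}\,\eta\bigl(f_{b_0}^{-1}(E)\bigr)=\frac{1}{\#B}\,\eta(C)=\frac{1}{\#B}>0$. On the other hand, for $k\in\Z^d\sm\{\mathbf{0}\}$ we have $E+k\sse\mathrm{int}(C)+k=\mathrm{int}(C+k)$ by (iii); writing $C=t_0+[0,1]^d$, a point of $\mathrm{int}(C+k)=t_0+k+(0,1)^d$ lies in $C$ only if $k\in(-1,1)^d\cap\Z^d=\{\mathbf{0}\}$, so $(E+k)\cap C=\emptyset$ and thus $\nu(E+k)=0$ since $\mathrm{spt}(\nu)\sse C$. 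By Theorem~\ref{theorem-emptyset}, $\mcal{Z}(\nu)=\emptyset$; as $\{\nu_{>m_k}\}$ is a subsequence of $\{\nu_{>n}\}$ converging weakly to $\nu$ and $\lim_{n\to\f}\|R_1^{-1}\cdots R_n^{-1}\|=0$, Theorem~\ref{theorem-spectrality-emptyset} then shows $\mu$ is a spectral measure with a spectrum in $\Z^d$.

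The main obstacle is the geometric bookkeeping of the middle step: proving $\mathrm{spt}(\nu_{>n})\sse C$ for every $n$ and then using the \emph{strict} inclusion $f_{b_0}(C)\sse\mathrm{int}(C)$ from (iii) to place a set $E$ with positive $\nu$-mass far enough from $\partial C$ that all its nonzero integer translates avoid $\mathrm{spt}(\nu)$. Everything else is a routine compactness-and-weak-limit argument feeding into Theorems~\ref{theorem-emptyset} and \ref{theorem-spectrality-emptyset}.
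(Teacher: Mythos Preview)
Your proof is correct and follows essentially the same route as the paper: both establish $\mathrm{spt}(\nu_{>n})\subseteq C$, extract a weakly convergent subsequence along the indices where $(R,B)$ occurs, pass the recursion $\nu_{>n-1}=\delta_{R^{-1}B}*(\nu_{>n}\circ R)$ to the limit, and then invoke Theorems~\ref{theorem-emptyset} and~\ref{theorem-spectrality-emptyset}. The only cosmetic differences are that the paper proves existence of $\mu$ directly via the Jessen--Wintner criterion (Theorem~\ref{theorem-weak-convergence}) rather than citing~\cite{Li-Miao-Wang-2022}, and it takes $E=\mathrm{spt}(\nu)\cap\mathrm{int}(C)$ instead of your more explicit choice $E=f_{b_0}(C)$.
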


Next, we focus on admissible pairs consisting of diagonal matrices and digit sets that satisfy the assumption (ii) for $C=[0,1]^d$ in Theorem \ref{theorem-general-spectrality}.
For the one-dimensional case, An, Fu, and Lai \cite{An-Fu-Lai-2019} showed that given a sequence of admissible pairs $\{(R_n,B_n)\}_{n=1}^\f$ in $\R$, if $R_n \ge 2$ and $B_n \sse \{0,1,\ldots, R_n -1\}$ for all $n \ge 1$, and \[ \liminf_{n \to \f} \# B_n < \f, \] then the infinite convolution $\mu$ defined in \eqref{infinite-convolution} is a spectral measure with a spectrum in $\Z$.
We generalize their result in the higher dimension.

In $\R^d$, we use $\mcal{D}_d$ to denote the set of all pairs $(R,B)$ of a diagonal matrix $R=\mathrm{diag}(m_1,m_2,\cdots,m_d)$, where $m_1,m_2,\ldots, m_d \ge 2$ are integers, and a nonempty subset
\[ B \sse \{ 0,1,\ldots,m_1-1 \} \times \{ 0,1,\ldots,m_2 -1 \} \times \cdots \times \{0,1,\ldots,m_d-1\}. \]
Given a sequence $\{(R_n, B_n)\}_{n=1}^\f \subseteq \mcal{D}_d$, by Theorem 1.1 in \cite{Li-Miao-Wang-2022}, the infinite convolution $\mu$ defined in \eqref{infinite-convolution} exists.
If, moreover, $\{(R_n, B_n)\}_{n=1}^\f$ is a sequence of admissible pairs, then we obtain the spectrality of $\mu$ under some conditions (also see Theorem \ref{theorem-Rd-1-2}).

\begin{theorem}\label{theorem-Rd-1}
  In $\R^d$, suppose that $\{(R_n, B_n)\}_{n=1}^\f \subseteq \mcal{D}_d$ is a sequence of admissible pairs.
  If there exists an admissible pair $(R,B)$ that occurs infinitely times in the sequence $\{(R_n,B_n)\}_{n=1}^\f$, and moreover, all diagonal elements of $R$ are greater than or equal to $d+1$,
  then the infinite convolution $\mu$ defined in \eqref{infinite-convolution} is a spectral measure with a spectrum in $\Z^d$.
\end{theorem}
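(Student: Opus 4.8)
The plan is to derive the statement from Theorem~\ref{theorem-spectrality-emptyset}, via Theorem~\ref{theorem-emptyset} and the Remark following it, so that the whole problem reduces to exhibiting one convenient weak limit of the family $\{\nu_{>n}\}$ from \eqref{ms_nugen}. The hypotheses of Theorem~\ref{theorem-spectrality-emptyset} are automatic in the present setting: $\mu$ exists (recalled above, by Theorem~1.1 of \cite{LMW22}), and since each $R_n=\mathrm{diag}(m_1^{(n)},\dots,m_d^{(n)})$ has all diagonal entries $\ge 2$, the diagonal matrix $R_1^{-1}\cdots R_n^{-1}$ has diagonal entries $\le 2^{-n}$, so $\|R_1^{-1}\cdots R_n^{-1}\|\to 0$. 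Moreover, a point of $\mathrm{spt}(\nu_{>n})$ has $i$-th coordinate $\sum_{k>n}(m_i^{(n+1)}\cdots m_i^{(k)})^{-1}b_{k,i}$ with $0\le b_{k,i}\le m_i^{(k)}-1$, hence lies in $[0,1]$; thus $\mathrm{spt}(\nu_{>n})\sse[0,1]^d$, and so does the support of every subsequential weak limit $\nu$. Therefore, if such a $\nu$ has a point $x_0\in\mathrm{spt}(\nu)\cap(0,1)^d$, then $x_0+k\notin[0,1]^d\supseteq\mathrm{spt}(\nu)$ for all $k\in\Z^d\sm\{\mathbf 0\}$, the Remark after Theorem~\ref{theorem-emptyset} gives $\mathcal{Z}(\nu)=\emptyset$, and Theorem~\ref{theorem-spectrality-emptyset} concludes. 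So it suffices to produce a subsequential weak limit $\nu$ whose support meets the open unit cube.

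Let $p_1<p_2<\cdots$ enumerate the indices with $(R_{p_j},B_{p_j})=(R,B)$ and put $n_j=p_j-1$. Using $\nu_{>p_j-1}=\delta_{R^{-1}B}\ast(\nu_{>p_j}\circ R^{-1})$, whose second factor is supported in $R^{-1}[0,1]^d=\prod_i[0,1/m_i]\sse[0,1/(d+1)]^d$, pass to a subsequence along which $\nu_{>n_j}\to\nu=\delta_{R^{-1}B}\ast\eta$ with $\mathrm{spt}(\eta)\sse\prod_i[0,1/m_i]$; then $\mathrm{spt}(\nu)=\bigcup_{b\in B}(R^{-1}b+\mathrm{spt}(\eta))$. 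We must find $b^\ast\in B$ and $y^\ast\in\mathrm{spt}(\eta)$ with $b^\ast_i/m_i+y^\ast_i\in(0,1)$ for every $i$. As $y^\ast_i\le1/m_i$ and $0\le b^\ast_i\le m_i-1$, the only coordinatewise obstructions are the coincidences $(b^\ast_i=0,\ y^\ast_i=0)$ and $(b^\ast_i=m_i-1,\ y^\ast_i=1/m_i)$; for instance a digit $b^\ast\in B$ with $1\le b^\ast_i\le m_i-1$ for all $i$ together with $\mathbf 0\in\mathrm{spt}(\eta)$ yields $x_0=R^{-1}b^\ast\in(0,1)^d$, and symmetric variants use the other corners of $\prod_i[0,1/m_i]$. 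Since $\eta$ is itself a weak limit of tail measures of the same type, still carrying factors of the form $\delta_{S^{-1}B}$ at arbitrarily fine scales, the same reduction can be pushed one scale further whenever a given scale fails to deliver $x_0$ outright.

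The crux — and the step I expect to be the main obstacle — is a combinatorial lemma to the effect that an admissible pair $(R,B)\in\mathcal{D}_d$ with all diagonal entries of $R$ at least $d+1$ forces the above configuration, possibly only after grouping $\{(R_n,B_n)\}$ into an equivalent sequence in which the repeated block becomes the iterated admissible pair $(R^N,\ B+RB+\cdots+R^{N-1}B)$ (the intervening, arbitrary pairs being absorbed into the grouping, and one must check they do not spoil the conclusion). Here a coordinate of a digit of $B+RB+\cdots+R^{N-1}B$ fails to lie strictly between $0$ and $m_i^N-1$ exactly when its base-$m_i$ string is constantly $0$ or constantly $m_i-1$, which for large $N$ is only possible if the $i$-th coordinate projection of $B$ is $\{0\}$ or $\{m_i-1\}$; ruling these coordinate-degeneracies out simultaneously over the $d$ coordinates, and handling the intervening blocks, is where the numerology $m_i\ge d+1$ enters. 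In the genuinely degenerate case — some coordinate projection of $B$ a singleton — either the degeneracy persists along the whole sequence, so $\mu$ is carried by a coordinate hyperplane and one finishes by induction on $d$, or it is broken by some other $(R_n,B_n)$, which then contributes a point of $\mathrm{spt}(\eta)$ off the offending hyperplane. (When the grouping supplies a repeated pair with a strictly interior digit one may instead apply Theorem~\ref{theorem-general-spectrality} directly with $C=[0,1]^d$, conditions (i)--(iii) being then immediate.)
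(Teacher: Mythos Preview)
Your high-level reduction matches the paper's exactly: take the indices $p_j$ with $(R_{p_j},B_{p_j})=(R,B)$, pass to a subsequence along which $\nu_{>p_j}\to\rho$ weakly with $\mathrm{spt}(\rho)\sse[0,1]^d$, observe that $\nu_{>p_j-1}\to\nu=\delta_{R^{-1}B}\ast(\rho\circ R)=(\delta_B\ast\rho)\circ R$, show $\mcal{Z}(\nu)=\emptyset$, and conclude by Theorem~\ref{theorem-spectrality-emptyset}. The gap is precisely at your self-identified crux, and the mechanisms you sketch for it do not close.

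Your target --- a point of $\mathrm{spt}(\nu)$ in the open cube $(0,1)^d$ --- is stronger than what Theorem~\ref{theorem-emptyset} requires, and your proposed routes to it break down. The iterate $(R^N,B+RB+\cdots+R^{N-1}B)$ only arises if $(R,B)$ occurs $N$ times \emph{consecutively}, which is not assumed; absorbing intervening pairs gives blocks that vary with $j$, so no single repeated pair emerges. The claim that $\eta$ retains convolution factors $\delta_{S^{-1}B}$ at fine scales does not survive the weak limit when the gaps $p_{j+1}-p_j$ are unbounded: $\rho$ is merely some probability measure on $[0,1]^d$ with no further structure. And your reading of the obstruction is off: with $R=\mathrm{diag}(3,3)$ and $B=\{(0,0),(0,1),(1,0)\}$ neither coordinate projection is a singleton, yet $B$ contains no interior digit.

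The paper instead isolates a standalone result (Theorem~\ref{key-theorem}): for any $(R,B)\in\mcal{D}_d$ with diagonal entries $\ge d+1$ and \emph{any} $\rho\in\mcal{P}(\R^d)$ supported in $[0,1]^d$, the measure $\nu=(\delta_B\ast\rho)\circ R$ admits a Borel set $E$ with $\nu(E)>0$ and $\nu(E+n)=0$ for all nonzero $n\in\Z^d$. The proof is by induction on $d$. If $\rho$ charges some slab $\{0<x_j<1\}$, one projects out the $j$-th coordinate and reduces to $\R^{d-1}$. Otherwise $\mathrm{spt}(\rho)\sse\{0,1\}^d$; after a coordinate reflection one may assume $\mathbf{0}\in\mathrm{spt}(\rho)$, so $R^{-1}B\sse\mathrm{spt}(\nu)$. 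One then runs an iterative digit-chase inside $B$: given $b^{(k)}\in B$, either $E=\{R^{-1}b^{(k)}\}$ already has no nonzero integer translate in $\mathrm{spt}(\nu)$, or the failure forces a coordinate $b^{(k)}_{i_k}=0$ and produces $b^{(k+1)}\in B$ with $b^{(k+1)}_{i_k}=m_{i_k}-1$, while each previously secured lower bound drops by at most $1$. Since $m_i\ge d+1$, after at most $d$ steps every coordinate of $b^{(d+1)}$ is $\ge 1$, forcing termination. This is exactly where the bound $m_i\ge d+1$ enters; note that the set $E$ found at an earlier termination may well sit on the boundary of $[0,1]^d$, which is why aiming for the open cube from the outset is the wrong target.
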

\begin{remark}
  The value $d+1$ is a necessarily technical condition appearing in Theorem \ref{key-theorem}.
\end{remark}

In particular, if $\{(R_n,B_n)\}_{n=1}^\f \subseteq \mcal{D}_d$ is chosen from a finite set of admissible pairs, then the spectrality of infinite convolutions follows directly (also see Theorem \ref{theorem-Rd-2-2}).
\begin{theorem}\label{theorem-Rd-2}
  In $\R^d$, given a sequence of admissible pairs $\{(R_n, B_n)\}_{n=1}^\f \subseteq \mcal{D}_d$, if
  \[ \sup_{n \ge 1} \|R_n\| < \f, \]
  then the infinite convolution $\mu$ defined in \eqref{infinite-convolution} is a spectral measure with a spectrum in $\Z^d$.
\end{theorem}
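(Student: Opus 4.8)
The plan is to reduce Theorem~\ref{theorem-Rd-2} to Theorem~\ref{theorem-Rd-1} by grouping consecutive admissible pairs into blocks of a fixed length, thereby manufacturing an infinitely repeated pair whose diagonal entries are as large as we wish.

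First I would observe that the hypothesis $\sup_{n\ge 1}\|R_n\|<\f$, together with $\{(R_n,B_n)\}_{n=1}^\f\sse\mcal{D}_d$, forces the set $\{(R_n,B_n):n\ge 1\}$ to be finite: writing $R_n=\mathrm{diag}(m_1^{(n)},\dots,m_d^{(n)})$, each entry lies in the finite range $2\le m_i^{(n)}\le\sup_{n}\|R_n\|$, and for each such diagonal matrix there are only finitely many subsets $B_n$ of the finite set $\prod_{i=1}^d\{0,\dots,m_i^{(n)}-1\}$. Consequently, for every fixed block length $k\ge 1$, the $k$-tuples $\big((R_{(j-1)k+1},B_{(j-1)k+1}),\dots,(R_{jk},B_{jk})\big)$, $j\ge1$, take only finitely many values, so by the pigeonhole principle one value occurs for infinitely many $j$.

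Next I fix $k=\lceil\log_2(d+1)\rceil$ and pass to the grouped sequence $\{(\wt R_j,\wt B_j)\}_{j=1}^\f$ given by $\wt R_j=R_{jk}R_{jk-1}\cdots R_{(j-1)k+1}$ and $\wt B_j=\sum_{i=(j-1)k+1}^{jk}\big(R_{jk}R_{jk-1}\cdots R_{i+1}\big)B_i$ (Minkowski sums, the empty product being the identity). A direct computation with Minkowski sums shows that for every $J\ge1$
\[
  \delta_{\wt R_1^{-1}\wt B_1}*\delta_{(\wt R_2\wt R_1)^{-1}\wt B_2}*\cdots*\delta_{(\wt R_J\cdots\wt R_1)^{-1}\wt B_J}=\mu_{Jk},
\]
because $\wt R_J\cdots\wt R_1=R_{Jk}\cdots R_1$ and the grouped digit sets are precisely the mixed-radix recombinations of the original ones. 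Hence $\{\mu_{Jk}\}_J$ is a subsequence of $\{\mu_n\}$, so it converges to $\mu$, and $\mu$ is also the infinite convolution generated by $\{(\wt R_j,\wt B_j)\}$.

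It then remains to verify that $\{(\wt R_j,\wt B_j)\}$ satisfies the hypotheses of Theorem~\ref{theorem-Rd-1}. Each $(\wt R_j,\wt B_j)$ is again an admissible pair, by iterating the standard composition property of Hadamard triples: if $(P,C,L)$ and $(P',C',L')$ are Hadamard triples, then so is $(P'P,\,P'C+C',\,L+P^{\mathrm{T}}L')$. Moreover $(\wt R_j,\wt B_j)\in\mcal{D}_d$: $\wt R_j$ is diagonal with each entry equal to a product of $k$ integers $\ge2$, hence $\ge2^k\ge d+1$, and each coordinate of an element of $\wt B_j$ is the value of a mixed-radix integer with digits drawn from the boxes $\{0,\dots,m_\ell^{(\cdot)}-1\}$, hence lies in $\{0,\dots,\wt m_\ell^{(j)}-1\}$ where $\wt m_\ell^{(j)}$ is the $\ell$-th diagonal entry of $\wt R_j$. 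Finally, by the first paragraph some grouped pair $(\wt R,\wt B)$ occurs infinitely often, and its matrix $\wt R$ has all diagonal entries $\ge2^k\ge d+1$. Applying Theorem~\ref{theorem-Rd-1} to the sequence $\{(\wt R_j,\wt B_j)\}$ then yields that $\mu$ is a spectral measure with a spectrum in $\Z^d$.

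The only genuinely technical points are the bookkeeping behind the convolution identity (that grouping Dirac combs produces exactly the mixed-radix digit set $\wt B_j$) and the preservation of admissibility under composition; both are elementary and classical. I expect stating the Hadamard-triple composition identity cleanly to be the main thing to pin down, after which the reduction to Theorem~\ref{theorem-Rd-1} is routine.
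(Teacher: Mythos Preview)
Your proof is correct and takes a genuinely different route from the paper's. The paper does not reduce to Theorem~\ref{theorem-Rd-1}; instead it argues directly from Theorem~\ref{theorem-spectrality-emptyset}. It first observes (as you do) that the set $\{(R_n,B_n):n\ge1\}$ is finite, then views the sequence as a point $\eta$ in the compact shift space $\Omega=\{1,\dots,m\}^{\N}$, extracts a convergent subsequence $\sigma^{n_j}(\eta)\to\zeta$, and proves via a uniform-approximation lemma (Lemma~\ref{lemma-uniform-approximation}) that $\nu_{>n_j}\to\mu_\zeta$ weakly. Corollary~\ref{coro-infinite-convolution} then gives $\mcal{Z}(\mu_\zeta)=\emptyset$, and Theorem~\ref{theorem-spectrality-emptyset} finishes.

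Your grouping argument is shorter and more elementary: by packaging $k=\lceil\log_2(d+1)\rceil$ consecutive pairs you force every grouped matrix to have diagonal entries $\ge 2^k\ge d+1$, and pigeonhole supplies the infinitely repeated pair, so Theorem~\ref{theorem-Rd-1} applies immediately. This bypasses both the shift-space compactness step and the auxiliary Lemma~\ref{lemma-uniform-approximation}. The paper's approach, on the other hand, illustrates the general machinery of Theorem~\ref{theorem-spectrality-emptyset} in action and would adapt more readily to situations where a fixed-length regrouping does not land back in $\mcal{D}_d$. The two technical points you flag---that the grouped convolution equals $\mu_{Jk}$ (no collisions in the Minkowski sum, by the mixed-radix structure of $\mcal{D}_d$) and that admissibility is preserved under composition---are handled by Lemma~\ref{lemma-HT}(iii) in the paper, so there is nothing further to pin down.
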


Finally, we give some examples in $\R^2$ to illustrate our results.
\begin{example}
  Let
  \[
  R_1 = \left(
          \begin{array}{cc}
            4 & 0 \\
            4 & -4 \\
          \end{array}
        \right),
  B_1 = \bigg\{ \cv{2}{0}, \cv{3}{0}, \cv{2}{1}, \cv{3}{1} \bigg\}.
  \]
  Then the discrete measure $\delta_{R_1^{-1} B_1}$ admits a spectrum
  \[ L_1 = \bigg\{ \cv{0}{0}, \cv{2}{0}, \cv{2}{-2}, \cv{4}{-2} \bigg\}. \]
  Let
  \[
  R_2 = \left(
          \begin{array}{cc}
            3 & -3 \\
            3 & 3 \\
          \end{array}
        \right),
  B_2 = \bigg\{ \cv{0}{2}, \cv{1}{2}, \cv{0}{3} \bigg\}.
  \]
  Then the discrete measure $\delta_{R_2^{-1} B_2}$ admits a spectrum
  \[ L_2 = \bigg\{ \cv{0}{0}, \cv{3}{1}, \cv{3}{-1} \bigg\}. \]
  By calculation, we have $\|R_1^{-1}\| = (1+\sqrt{5})/8 < 1$ and $\|R_2^{-1}\| = \sqrt{2}/6 < 1$.
  For $\omega = (\omega_k)_{k=1}^\f \in \{1,2\}^{\N}$, the sequence of admissible pairs $\{ (R_{\omega_k}, B_{\omega_k}) \}_{k=1}^\f$ satisfies all assumptions for $C=[0,1]^2$ in Theorem \ref{theorem-general-spectrality}.
  Therefore, for $\omega = (\omega_k)_{k=1}^\f \in \{1,2\}^{\N}$, the infinite convolution \[\mu_\omega = \delta_{ R_{\omega_1}^{-1} B_{\omega_1} } * \delta_{ (R_{\omega_2}R_{\omega_1})^{-1} B_{\omega_2} } * \cdots * \delta_{ (R_{\omega_k} \cdots R_{\omega_2} R_{\omega_1})^{-1} B_{\omega_k} } * \cdots\] is a spectral measure with a spectrum in $\Z^2$.
\end{example}

\begin{example}
  For $n \ge 1$, if $n$ is odd, let $R_n = \mathrm{diag}(3,3)$ and $B_n = \{(0,0),(0,2),(2,0)\}$; if $n$ is even, let $R_n = \mathrm{diag}(n,n)$ and $B_n = \{(0,0),(n-1,n-1)\}$.
  Then $\{(R_n,B_n)\}_{n=1}^\f$ is a sequence of admissible pairs that satisfies neither the assumption in Theorem \ref{theorem-general-spectrality} nor the assumption in Theorem \ref{theorem-Rd-2}.
  It follows from Theorem \ref{theorem-Rd-1} that the infinite convolution
  \[ \mu = \delta_{R_1^{-1} B_1} * \delta_{(R_2 R_1)^{-1} B_2} * \cdots * \delta_{(R_n \cdots R_2 R_1)^{-1} B_n} * \cdots \]
  is a spectral measure with a spectrum in $\Z^2$.
\end{example}

The rest of paper is organized as follows.
In Section \ref{sec_pre}, we first review some definitions and some known results, and then we give the proof of Theorem~\ref{theorem-spectrality}.
In Section \ref{sec_IPZ}, we study the integral periodic zero set of Fourier transform and prove Theorem \ref{theorem-spectrality-emptyset} and Theorem \ref{theorem-emptyset}.
In Section \ref{sec_key-theorem}, we prove a key theorem for the later proof.
Finally, we prove Theorem \ref{theorem-general-spectrality}, Theorem \ref{theorem-Rd-1}, and Theorem \ref{theorem-Rd-2} in Section \ref{sec_Rd}.

\section{Spectrality of infinite convolutions}\label{sec_pre}

Let $|x|$ denote the Euclidean norm of a vector $x\in \R^d$, and let $|z|$ also denote the modulus of a complex number $z\in \C$. The operator norm of a $d\times d$ real matrix $M\in M_d(\R)$ is denoted by $\|M\|$, and $M^{\mathrm{T}}$ denotes the transpose of $M$.
The open ball centred at $x$ with radius $\gamma$ in $\R^d$ is denoted by $U(x,\gamma)$.

For a finite nonzero Borel measure $\mu\in \mcal{M}(\R^d)$, the \emph{support} of $\mu$ is  defined to be the smallest closed set with full measure, and we also say that $\mu$ is \emph{concentrated on} a Borel subset $E$ of $\R^d$ if \[ \mu\big( \R^d \sm E \big)=0. \]

For a Borel probability measure $\mu \in \mcal{P}(\R^d)$,
the \emph{Fourier transform} of $\mu$ is given by
$$ \wh{\mu}(\xi) = \int_{\R^d} e^{-2\pi i \xi \cdot x} \D \mu(x),\; \xi \in \R^d. $$
Let $\mu,\mu_1,\mu_2,\ldots \in \mcal{P}(\R^d)$. We say that $\{\mu_n\}$ \textit{converges weakly} to $\mu$ if $$\lim_{n \to \f} \int_{\R^d} f(x) \D \mu_n(x) = \int_{\R^d} f(x) \D \mu(x)$$
for all $f \in C_b(\R^d),$ where $C_b(\R^d)$ is the set of all bounded continuous functions on $\R^d$.
The following well-known theorem characterizes the weak convergence of probability measures.

\begin{theorem}\label{thm_wkcvg}
  Let $\mu,\mu_1,\mu_2,\ldots \in \mcal{P}(\R^d)$. Then $\{\mu_n\}$ converges weakly to $\mu$ if and only if $\displaystyle \lim_{n \to \f} \wh{\mu}_n(\xi)=\wh{\mu}(\xi)$ for every $\xi \in \R^d$.  Moreover, if $\{\mu_n\}$ converges weakly to $\mu$, then for every $h>0$, the sequence $\wh{\mu}_n(\xi)$ converges uniformly to $ \wh{\mu}(\xi)$ for $|\xi| \le h$.
\end{theorem}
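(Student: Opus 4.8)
The plan is to prove the three assertions separately: the easy implication (weak convergence $\Rightarrow$ pointwise convergence of Fourier transforms), the harder converse, and finally the uniform convergence on balls. For the easy direction, suppose $\mu_n$ converges weakly to $\mu$ and fix $\xi \in \R^d$. The real and imaginary parts of $x \mapsto e^{-2\pi i \xi\cdot x}$ are bounded and continuous, hence belong to $C_b(\R^d)$. Applying the definition of weak convergence to these two functions gives at once $\wh{\mu}_n(\xi) \to \wh{\mu}(\xi)$, which settles one implication.

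For the converse, assume $\lim_{n\to\f}\wh{\mu}_n(\xi)=\wh{\mu}(\xi)$ for every $\xi$. The crucial step is to establish the uniform tightness of $\{\mu_n\}$. I would use the elementary one-dimensional estimate that for a probability measure $\sigma$ on $\R$,
$$\sigma\set{ |x| \ge 1/(\pi u) } \le \frac{1}{u}\int_{-u}^{u}\big(1 - \mathrm{Re}\,\wh{\sigma}(t)\big)\D t,$$
which follows from the identity
$$\frac{1}{2u}\int_{-u}^{u}\big(1 - \mathrm{Re}\,\wh{\sigma}(t)\big)\D t = \int_{\R}\Big(1 - \frac{\sin(2\pi u x)}{2\pi u x}\Big)\D\sigma(x)$$
together with $1 - \frac{\sin s}{s}\ge \tfrac{1}{2}$ for $|s|\ge 2$. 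Applying this to each of the $d$ one-dimensional marginals of $\mu_n$, whose characteristic functions are the restrictions $t \mapsto \wh{\mu}_n(t e_j)$, and covering $\set{|x| > R}$ by the coordinate slabs $\set{|x_j| > R/\sqrt{d}}$, bounds the tail $\mu_n(\set{|x|>R})$ by a sum of such integrals. Since $\wh{\mu}$ is continuous at the origin with $\wh{\mu}(\mbf{0})=1$, these integrals are small for small $u$; and by the assumed pointwise convergence together with dominated convergence (the integrands are bounded by $2$), the corresponding integrals for $\mu_n$ converge to those for $\mu$, hence are uniformly small for all large $n$. Enlarging $R$ to absorb the finitely many remaining indices, each $\mu_n$ being individually tight, yields $\sup_n \mu_n(\set{|x|>R}) < \ep$.

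With tightness in hand, Prokhorov's theorem makes $\{\mu_n\}$ relatively compact in the weak topology. If $\mu_{n_k} \to \nu$ weakly along some subsequence, then by the easy direction already proved $\wh{\nu} = \lim_k \wh{\mu}_{n_k} = \wh{\mu}$, so injectivity of the Fourier transform on finite Borel measures forces $\nu = \mu$. Thus every subsequence has a further subsequence converging weakly to the same limit $\mu$, which gives $\mu_n \to \mu$ weakly. For the final assertion, weak convergence again yields uniform tightness, so fix $\ep>0$ and a radius $M$ with $\sup_n\mu_n(\set{|x|>M}) < \ep$ and $\mu(\set{|x|>M})<\ep$. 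For $|\xi|,|\xi'| \le h$, splitting the integral defining $\wh{\mu}_n(\xi)-\wh{\mu}_n(\xi')$ over $\set{|x|\le M}$ and its complement and using $|e^{-2\pi i\xi\cdot x}-e^{-2\pi i\xi'\cdot x}| \le 2\pi M|\xi-\xi'|$ on the former gives the uniform equicontinuity estimate $|\wh{\mu}_n(\xi)-\wh{\mu}_n(\xi')| \le 2\pi M|\xi-\xi'| + 2\ep$, with a constant independent of $n$. Pointwise convergence combined with this uniform equicontinuity then forces uniform convergence on the compact ball $\set{|\xi|\le h}$ by a standard Arzel\`a--Ascoli argument.

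The main obstacle is the tightness step in the converse direction: upgrading the mere pointwise convergence of the Fourier transforms to uniform control of the tails, via the continuity of $\wh{\mu}$ at the origin and the marginal estimate above, is the only part of the argument that is not purely formal; the two remaining directions reduce to the definition of weak convergence and to an equicontinuity bound, respectively.
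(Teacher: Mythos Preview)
The paper does not supply its own proof of this theorem; it is quoted as a well-known characterization of weak convergence (it is L\'evy's continuity theorem together with the standard locally uniform upgrade), so there is nothing to compare against. Your argument is correct and is the standard textbook proof: the tail estimate via $\tfrac{1}{u}\int_{-u}^{u}\bigl(1-\mathrm{Re}\,\wh{\sigma}(t)\bigr)\D t$ applied to the coordinate marginals gives tightness, Prokhorov plus injectivity of the Fourier transform identifies every subsequential limit with $\mu$, and tightness-based equicontinuity yields the uniform convergence on $\set{|\xi|\le h}$.
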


The next theorem is often used to check whether a probability measure is spectral.
The proof for compactly supported probability measures refers to Lemma 3.3 in \cite{Jorgensen-Pedersen-1998}.
Actually, it holds for all probability measures (see \cite{LMW21} for a detailed proof).

\begin{theorem}\label{criterion}
Let $\mu \in \mcal{P}(\R^d)$. A countable subset $\Lambda \sse \R^d$ is a spectrum of $\mu$ if and only if for all $\xi \in \R^d$ we have
$$
Q(\xi) = \sum_{\lambda\in \Lambda} |\wh{\mu}(\lambda + \xi)|^2=1.
$$
\end{theorem}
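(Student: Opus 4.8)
The plan is to translate everything into the Hilbert space $L^2(\mu)$ and to recognize $Q(\xi)$ as the total squared mass of the inner products of a single unit vector against the system $\{e_\lambda\}_{\lambda\in\Lambda}$. With the convention $\la f,g\ra = \int_{\R^d} f\,\ol{g}\,\D\mu$, a direct computation gives $\la e_a,e_b\ra = \wh{\mu}(b-a)$ for all $a,b\in\R^d$. In particular $\|e_\lambda\|^2 = \wh{\mu}(\mathbf{0}) = \mu(\R^d) = 1$, so every $e_\lambda$ is automatically a unit vector, and $\la e_{-\xi},e_\lambda\ra = \wh{\mu}(\lambda+\xi)$. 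Hence for each fixed $\xi\in\R^d$,
\[ Q(\xi) = \sum_{\lambda\in\Lambda} |\la e_{-\xi},e_\lambda\ra|^2, \]
the sum of squared moduli of the ``Fourier coefficients'' of the unit vector $e_{-\xi}$. First I would dispatch the forward implication: if $\Lambda$ is a spectrum then $\{e_\lambda\}$ is an orthonormal basis, so Parseval's identity applied to $f=e_{-\xi}$ yields $Q(\xi) = \|e_{-\xi}\|^2 = 1$ for every $\xi$.

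For the converse, assume $Q\equiv 1$. I would extract orthonormality first, by evaluating $Q$ at the points $\xi=-\lambda_0$ with $\lambda_0\in\Lambda$ and isolating the diagonal term:
\[ 1 = Q(-\lambda_0) = |\wh{\mu}(\mathbf{0})|^2 + \sum_{\lambda\ne\lambda_0} |\wh{\mu}(\lambda-\lambda_0)|^2 = 1 + \sum_{\lambda\ne\lambda_0} |\wh{\mu}(\lambda-\lambda_0)|^2, \]
which forces $\la e_{\lambda_0},e_\lambda\ra = \wh{\mu}(\lambda-\lambda_0) = 0$ for all $\lambda\ne\lambda_0$. Together with $\|e_\lambda\|=1$, this is exactly orthonormality of $\{e_\lambda\}$.

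It remains to prove completeness, which I expect to be the main obstacle, since $\mu$ is an arbitrary (possibly non-compactly-supported) probability measure, so no density of trigonometric polynomials is available a priori. Let $H = \ol{\mathrm{span}}\{e_\lambda:\lambda\in\Lambda\}$ with orthogonal projection $P_H$. For each $\eta\in\R^d$, the hypothesis $Q(-\eta)=1=\|e_\eta\|^2$ combined with orthonormality gives $\|P_H e_\eta\|^2 = \sum_\lambda |\la e_\eta,e_\lambda\ra|^2 = \|e_\eta\|^2$, so $e_\eta\in H$; thus $\mathrm{span}\{e_\eta:\eta\in\R^d\}\sse H$. The decisive final step is to show this span is dense in $L^2(\mu)$: if $f\in L^2(\mu)$ is orthogonal to every $e_\eta$, then $\int_{\R^d} f(x)\,e^{-2\pi i\eta\cdot x}\,\D\mu(x) = 0$ for all $\eta$, meaning the Fourier transform of the finite complex measure $f\,\D\mu$ vanishes identically; by injectivity of the Fourier transform on finite measures, $f=0$ in $L^2(\mu)$. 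Since $H$ is closed and contains a dense subspace, $H=L^2(\mu)$, so $\{e_\lambda\}$ is an orthonormal basis and $\Lambda$ is a spectrum. The only delicate point is this density argument via Fourier uniqueness, which substitutes for the Stone--Weierstrass argument that would be available only in the compactly supported case.
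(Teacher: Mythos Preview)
Your argument is correct. The forward direction, the extraction of orthonormality from $Q(-\lambda_0)=1$, and the deduction that every $e_\eta$ lies in $H=\ol{\mathrm{span}}\{e_\lambda\}$ are exactly the steps the paper's (suppressed) proof takes.

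The only genuine divergence is in the final completeness step. You argue that if $f\in L^2(\mu)$ is orthogonal to every $e_\eta$, then the finite complex measure $f\,\D\mu$ has identically vanishing Fourier transform, so $f=0$ by Fourier uniqueness on $\mcal{M}(\R^d)$. The paper instead shows $C_c(\R^d)\sse H$: for $f\in C_c(\R^d)$ it forms the periodizations $f_k(x)=\sum_{n\in\Z^d} f(x+kn)$, approximates each $f_k$ by trigonometric polynomials via Stone--Weierstrass (hence $f_k\in H$), and then checks $f_k\to f$ in $L^2(\mu)$ as $k\to\infty$ using that $\mu$ has finite total mass. Your route is shorter and invokes a single standard black box (injectivity of the Fourier transform on finite complex measures); the paper's route is more hands-on and keeps everything inside Stone--Weierstrass plus elementary estimates, at the cost of the periodization trick. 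Both handle the non-compactly-supported case without difficulty.
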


The spectrality is invariant under affine transformations.
For a $d\times d$ nonsingular real matrix $M$ and a vector $b\in \R^d$, we define $T_{M,b}: \R^d \to \R^d$ by $T_{M,b}(x) = Mx + b$.

\begin{lemma}\label{lemma-affine-invariant}
  Let $\nu \in \mcal{P}(\R^d)$ be a spectral measure with a spectrum $\Lambda$.
  Then $\mu = \nu \circ T_{M,b}^{-1}$ is a spectral measure with a spectrum $(M^{\mathrm{T}})^{-1} \Lambda$ for any $d\times d$ nonsingular real matrix $M$ and any vector $b \in \R^d$.
\end{lemma}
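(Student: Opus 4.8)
The plan is to reduce the statement to Theorem \ref{criterion} via the standard change-of-variables formula for the Fourier transform of a push-forward measure. First I would record, for $\mu = \nu \circ T_{M,b}^{-1}$ and any $\xi \in \R^d$, that since $\int_{\R^d} f \D \mu = \int_{\R^d} f\circ T_{M,b} \D \nu$ for bounded Borel $f$, taking $f(x) = e^{-2\pi i \xi \cdot x}$ and using $\xi \cdot (My) = (M^{\mathrm{T}}\xi)\cdot y$ gives
\[ \wh{\mu}(\xi) = \int_{\R^d} e^{-2\pi i \xi \cdot (My+b)} \D \nu(y) = e^{-2\pi i \xi \cdot b}\, \wh{\nu}(M^{\mathrm{T}}\xi), \]
so in particular $|\wh{\mu}(\xi)| = |\wh{\nu}(M^{\mathrm{T}}\xi)|$ for every $\xi \in \R^d$.

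Next, set $\Lambda' = (M^{\mathrm{T}})^{-1}\Lambda$. Since $M$ is nonsingular, so is $M^{\mathrm{T}}$, hence $\lambda \mapsto (M^{\mathrm{T}})^{-1}\lambda$ is a bijection of $\Lambda$ onto $\Lambda'$, and $\Lambda'$ is countable. Fix $\xi \in \R^d$. Applying the identity above to each $\lambda' + \xi$ with $\lambda' \in \Lambda'$, and writing $\lambda' = (M^{\mathrm{T}})^{-1}\lambda$ so that $M^{\mathrm{T}}\lambda' = \lambda$, I would compute
\[ \sum_{\lambda' \in \Lambda'} |\wh{\mu}(\lambda'+\xi)|^2 = \sum_{\lambda' \in \Lambda'} |\wh{\nu}(M^{\mathrm{T}}\lambda' + M^{\mathrm{T}}\xi)|^2 = \sum_{\lambda \in \Lambda} |\wh{\nu}(\lambda + M^{\mathrm{T}}\xi)|^2. \]
Since $\Lambda$ is a spectrum of $\nu$, Theorem \ref{criterion} says the right-hand side equals $1$ (it is $Q_\nu$ evaluated at $M^{\mathrm{T}}\xi$). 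As $\xi$ was arbitrary, the quantity $Q_\mu(\xi) = \sum_{\lambda' \in \Lambda'} |\wh{\mu}(\lambda'+\xi)|^2$ is identically $1$, so the converse direction of Theorem \ref{criterion} applied to $\mu$ and $\Lambda'$ shows that $\Lambda' = (M^{\mathrm{T}})^{-1}\Lambda$ is a spectrum of $\mu$, which is the claim.

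There is no genuine obstacle here: the argument is pure bookkeeping feeding into Theorem \ref{criterion}. The only two points requiring a moment of care are the appearance of the transpose $M^{\mathrm{T}}$ coming from $\xi\cdot(My) = (M^{\mathrm{T}}\xi)\cdot y$, and the legitimacy of reindexing the sum over $\Lambda'$ by $\Lambda$ — which is immediate because $M^{\mathrm{T}}$ is a linear bijection.
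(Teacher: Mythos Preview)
Your proof is correct and follows essentially the same route as the paper: compute $\wh{\mu}(\xi) = e^{-2\pi i b\cdot\xi}\,\wh{\nu}(M^{\mathrm{T}}\xi)$, set $\Lambda' = (M^{\mathrm{T}})^{-1}\Lambda$, reindex the sum $\sum_{\lambda'\in\Lambda'}|\wh{\mu}(\lambda'+\xi)|^2$ as $\sum_{\lambda\in\Lambda}|\wh{\nu}(\lambda+M^{\mathrm{T}}\xi)|^2$, and invoke Theorem~\ref{criterion} in both directions. The paper's proof is slightly terser but structurally identical.
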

\begin{proof}
  Note that $\wh{\mu}(\xi) = e^{-2\pi i b \cdot \xi} \wh{\nu}(M^{\mathrm{T}}\xi)$.
  Let $\Lambda' = (M^{\mathrm{T}})^{-1} \Lambda$, and then we have
  \begin{align*}
    Q(\xi) & = \sum_{\lambda' \in \Lambda'} |\wh{\mu}(\xi + \lambda')|^2 \\
     & = \sum_{\lambda' \in \Lambda'} |e^{-2\pi i b \cdot (\xi+\lambda')} \wh{\nu}\big( M^{\mathrm{T}}(\xi+\lambda') \big)|^2 \\
     & = \sum_{\lambda \in \Lambda} |\wh{\nu}( M^{\mathrm{T}}\xi+ \lambda ) |^2 \\
     & = 1.
  \end{align*}
  It follows from Theorem \ref{criterion} that $\Lambda' = (M^{\mathrm{T}})^{-1} \Lambda$ is a spectrum of $\mu$.
\end{proof}

Next lemma allows us to construct new spectral measures by convolution. One can also see Theorem 1.5 in \cite{He-Lai-Lau-2013}.

\begin{lemma}\label{lemma-spectrality-convolution}
  Suppose that $\delta_{A}$ admits a spectrum $L \sse \R^d$, where $A\sse \R^d$ is a finite subset.
  If $\nu \in \mcal{P}(\R^d)$ admits a spectrum $\Lambda \sse \R^d$ such that
  \[ a \cdot \lambda \in \Z \text{ for all $a \in A$ and $\lambda \in \Lambda$},\]
  then $\mu= \delta_{A} * \nu$ is a spectral measure with a spectrum $L + \Lambda$.
\end{lemma}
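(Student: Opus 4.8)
The plan is to verify directly the criterion in Theorem~\ref{criterion}: I must show that $\Lambda' = L + \Lambda$ is a spectrum of $\mu = \delta_A * \nu$, i.e. that $Q(\xi) = \sum_{\eta \in L+\Lambda} |\wh{\mu}(\xi+\eta)|^2 = 1$ for every $\xi \in \R^d$. The starting point is the factorization of the Fourier transform of a convolution, $\wh{\mu}(\xi) = \wh{\delta_A}(\xi)\,\wh{\nu}(\xi)$, where $\wh{\delta_A}(\xi) = \frac{1}{\#A}\sum_{a\in A} e^{-2\pi i a\cdot \xi}$. So each summand factors as $|\wh{\delta_A}(\xi+\ell+\lambda)|^2 \, |\wh{\nu}(\xi+\ell+\lambda)|^2$ with $\ell \in L$, $\lambda \in \Lambda$.

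The key step is to exploit the integrality hypothesis $a\cdot\lambda \in \Z$ for all $a\in A$, $\lambda\in\Lambda$. This forces $e^{-2\pi i a\cdot(\xi+\ell+\lambda)} = e^{-2\pi i a\cdot(\xi+\ell)}$ for every $a\in A$, hence $\wh{\delta_A}(\xi+\ell+\lambda) = \wh{\delta_A}(\xi+\ell)$; that is, $\wh{\delta_A}$ is constant along the coset indexed by $\lambda$. Therefore I can split the double sum as
\[
Q(\xi) = \sum_{\ell\in L} |\wh{\delta_A}(\xi+\ell)|^2 \sum_{\lambda\in\Lambda} |\wh{\nu}(\xi+\ell+\lambda)|^2.
\]
Since $\Lambda$ is a spectrum of $\nu$, the inner sum equals $1$ for every choice of $\xi+\ell$ by Theorem~\ref{criterion}. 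This leaves $Q(\xi) = \sum_{\ell\in L} |\wh{\delta_A}(\xi+\ell)|^2$, which equals $1$ because $L$ is a spectrum of $\delta_A$, again by Theorem~\ref{criterion}. This gives $Q(\xi)\equiv 1$, and Theorem~\ref{criterion} then yields that $L+\Lambda$ is a spectrum of $\mu$.

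There is one point that needs a remark rather than being a genuine obstacle: one should check that the map $(\ell,\lambda)\mapsto \ell+\lambda$ from $L\times\Lambda$ to $L+\Lambda$ is injective, so that the rearrangement of the sum over $L+\Lambda$ into a double sum over $L\times\Lambda$ is legitimate and $L+\Lambda$ is genuinely the indexing set of an orthonormal system (not merely a spanning set with repetitions). This follows a posteriori: if $\ell+\lambda = \ell'+\lambda'$ with $(\ell,\lambda)\neq(\ell',\lambda')$, then the exponential functions $e_{\ell+\lambda}$ and $e_{\ell'+\lambda'}$ in $L^2(\mu)$ coincide, yet the computation above shows $\{e_{\ell+\lambda}\}$ is an orthonormal family in $L^2(\mu)$ (the cross terms vanish), a contradiction; alternatively one observes that the $Q(\xi)\equiv 1$ identity already forces the needed disjointness via the argument in the proof of Theorem~\ref{criterion}. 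So the only real work is the factorization identity above, and the hypothesis $a\cdot\lambda\in\Z$ is exactly what makes it go through; everything else is bookkeeping and an appeal to Theorem~\ref{criterion}. Since all series have nonnegative terms, Tonelli's theorem justifies every interchange of summation.
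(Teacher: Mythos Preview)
Your proposal is correct and follows essentially the same route as the paper's proof: factor $\wh{\mu} = \wh{\delta_A}\cdot\wh{\nu}$, use the hypothesis $a\cdot\lambda\in\Z$ to get $\wh{\delta_A}(\xi+\ell+\lambda)=\wh{\delta_A}(\xi+\ell)$, collapse the inner $\Lambda$-sum to $1$ via Theorem~\ref{criterion} for $\nu$, then the outer $L$-sum to $1$ via Theorem~\ref{criterion} for $\delta_A$. Your extra remark on the injectivity of $(\ell,\lambda)\mapsto\ell+\lambda$ is a point the paper passes over silently when rewriting $\sum_{\lambda'\in L+\Lambda}$ as $\sum_{\ell\in L}\sum_{\lambda\in\Lambda}$, so including it makes your argument slightly more complete.
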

\begin{proof}
  Note that \[ \wh{\delta}_A(\xi) = \frac{1}{\# A} \sum_{a\in A} e^{-2\pi i a\cdot \xi}. \]
  Since $a \cdot \lambda \in \Z$ for all $a \in A$ and $\lambda \in \Lambda$, we have
  \[ \wh{\delta}_A(\xi+\lambda) = \wh{\delta}_A(\xi) \text{ for all $\lambda \in \Lambda$ and $\xi \in \R^d$}. \]
  Let $\Lambda' = L + \Lambda$, and then by Theorem \ref{criterion} we have
  \begin{align*}
    Q(\xi) & = \sum_{\lambda' \in \Lambda'} |\wh{\mu}(\xi + \lambda')|^2 = \sum_{\lambda' \in \Lambda'} |\wh{\delta}_A(\xi + \lambda')\wh{\nu}(\xi + \lambda')|^2 \\
     & = \sum_{\ell \in L} \sum_{\lambda \in \Lambda} |\wh{\delta}_A(\xi + \ell+ \lambda)\wh{\nu}(\xi + \ell+\lambda)|^2 \\
     & = \sum_{\ell \in L} \sum_{\lambda \in \Lambda} |\wh{\delta}_A(\xi + \ell)\wh{\nu}(\xi + \ell+\lambda)|^2 \\
     & = \sum_{\ell \in L} |\wh{\delta}_A(\xi + \ell)|^2 \\
     & = 1.
  \end{align*}
  It follows from Theorem \ref{criterion} that $\Lambda' = L + \Lambda$ is a spectrum of $\mu$.
\end{proof}

\begin{corollary}
  If $\delta_{A}$ is a spectral measure, where $A\sse \Z^d$ is a finite subset, and $\nu \in \mcal{P}(\R^d)$ be a spectral measure with a spectrum in $\Z^d$, then $\mu= \delta_{A} * \nu$ is a spectral measure.
\end{corollary}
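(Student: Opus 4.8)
The plan is to obtain this as an immediate consequence of Lemma \ref{lemma-spectrality-convolution}, so the only work is to verify its hypotheses. Since $\delta_A$ is a spectral measure, fix a spectrum $L \sse \R^d$ of $\delta_A$; we will not need any integrality property of $L$. Since $\nu$ is a spectral measure with a spectrum contained in $\Z^d$, fix such a spectrum $\Lambda \sse \Z^d$ of $\nu$.

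The compatibility condition required by Lemma \ref{lemma-spectrality-convolution} is that $a \cdot \lambda \in \Z$ for every $a \in A$ and every $\lambda \in \Lambda$. This holds here for a trivial reason: by hypothesis $A \sse \Z^d$, and we have arranged $\Lambda \sse \Z^d$, so $a \cdot \lambda$ is a finite sum of products of integers, hence an integer. Applying Lemma \ref{lemma-spectrality-convolution} with this $A$, $L$, $\nu$, and $\Lambda$, I would conclude that $\mu = \delta_A * \nu$ is a spectral measure with spectrum $L + \Lambda$.

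There is no genuine obstacle; the one point worth flagging is that the integrality hypothesis in Lemma \ref{lemma-spectrality-convolution} is demanded only between the digit set $A$ and the spectrum $\Lambda$ of the convolutional factor $\nu$, not between $A$ and the spectrum $L$ of $\delta_A$. Thus it suffices that both $A$ and $\Lambda$ lie in $\Z^d$, and nothing further about $L$ (in particular, not that $L \sse \Z^d$) is needed. If one preferred to avoid citing Lemma \ref{lemma-spectrality-convolution}, the same computation — expanding $Q(\xi) = \sum_{\ell \in L}\sum_{\lambda \in \Lambda}|\wh{\delta}_A(\xi+\ell)\wh{\nu}(\xi+\ell+\lambda)|^2$, using the $\Lambda$-periodicity of $\wh{\delta}_A$ coming from $A,\Lambda \sse \Z^d$, and then Theorem \ref{criterion} applied to $\delta_A$ with spectrum $L$ and to $\nu$ with spectrum $\Lambda$ — reproduces the result directly.
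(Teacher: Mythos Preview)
Your proof is correct and is exactly the intended approach: the paper states this corollary immediately after Lemma~\ref{lemma-spectrality-convolution} without proof, as it follows directly from that lemma once one notes that $A\sse\Z^d$ and $\Lambda\sse\Z^d$ force $a\cdot\lambda\in\Z$. Your observation that no integrality of $L$ is needed is also correct and matches the hypotheses of the lemma.
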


We list some useful properties of admissible pairs in the following lemma, see \cite{Dutkay-Haussermann-Lai-2019, Laba-Wang-2002} for details.

\begin{lemma}\label{lemma-HT}
  Suppose that $(R,B)$ is an admissible pair in $\R^d$ and the discrete measure $\delta_{R^{-1} B}$ admits a spectrum $L \sse \Z^d$. Then

  {\rm(i)} $L + \ell_0$ is also a spectrum of $\delta_{R^{-1}B}$ for all $\ell_0 \in \R^d$.

  {\rm(ii)} The elements in $L$ are distinct modulo $R^{\mathrm{T}} \Z^d$, and if $\wt{L} \equiv L \pmod{ R^\mathrm{T}\Z^d}$, then $\wt{L}$ is also a spectrum of $\delta_{R^{-1}B}$.

  {\rm(iii)} For $1 \le j \le n$, let $(R_j, B_j)$ be an admissible pairs in $\R^d$ and let $L_j \sse \Z^d$ be a spectrum of $\delta_{R_j^{-1} B_j}$.
  Write $$\mathbf{R}= R_n R_{n-1} \cdots R_1,\quad \mathbf{B} = (R_n R_{n-1} \cdots R_2) B_1 + \cdots + R_n B_{n-1} + B_n.$$
  Then $(\mathbf{R},\mathbf{B})$ is an admissible pair, and $\delta_{\mathbf{R}^{-1}\mathbf{B}}$ admits a spectrum
  $$\mathbf{L} = L_1 + R_1^{\mathrm{T}} L_2 + \cdots + (R_1^{\mathrm{T}} R_2^{\mathrm{T}} \cdots R_{n-1}^{\mathrm{T}}) L_n.$$
\end{lemma}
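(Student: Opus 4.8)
The plan is to handle the three parts in order. Parts (i) and (ii) rest entirely on one observation together with the spectrality criterion, while part (iii) is reduced by induction to a single convolution step that feeds into Lemma~\ref{lemma-affine-invariant} and Lemma~\ref{lemma-spectrality-convolution}. The observation I would record first is that $\wh{\delta}_{R^{-1}B}$ is $R^{\mathrm{T}}\Z^d$-periodic: since $B \sse \Z^d$, for every $k \in \Z^d$ we have $(R^{-1}b)\cdot(R^{\mathrm{T}}k) = b \cdot k \in \Z$, and hence $\wh{\delta}_{R^{-1}B}(\xi + R^{\mathrm{T}}k) = \wh{\delta}_{R^{-1}B}(\xi)$ for all $\xi \in \R^d$.

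For part (i), with $\Lambda = L + \ell_0$ the quantity $Q(\xi) = \sum_{\ell \in L}\big|\wh{\delta}_{R^{-1}B}(\xi + \ell + \ell_0)\big|^2$ is exactly the $Q$-function attached to $L$, evaluated at $\xi + \ell_0$; since $L$ is a spectrum this is identically $1$, so Theorem~\ref{criterion} gives the claim. For part (ii), I would first show that the elements of $L$ are distinct modulo $R^{\mathrm{T}}\Z^d$: if $\ell \equiv \ell' \pmod{R^{\mathrm{T}}\Z^d}$, then for every $b \in B$ the phases $(R^{-1}b)\cdot \ell$ and $(R^{-1}b)\cdot \ell'$ agree modulo $\Z$, so the columns of the unitary matrix indexed by $\ell$ and $\ell'$ coincide, and orthogonality of distinct columns forces $\ell = \ell'$. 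Given this distinctness, any $\wt L \equiv L \pmod{R^{\mathrm{T}}\Z^d}$ is in bijection with $L$ via congruent representatives, so the periodicity above yields
\[ \sum_{\wt\ell \in \wt L}\big|\wh{\delta}_{R^{-1}B}(\xi + \wt\ell)\big|^2 = \sum_{\ell \in L}\big|\wh{\delta}_{R^{-1}B}(\xi + \ell)\big|^2 = 1, \]
and Theorem~\ref{criterion} shows $\wt L$ is a spectrum.

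For part (iii) the core is the case $n = 2$. The plan is to establish the factorization $\delta_{\mathbf{R}^{-1}\mathbf{B}} = \delta_{R_1^{-1}B_1} * \delta_{(R_2 R_1)^{-1}B_2}$ at the level of Fourier transforms: writing a typical element of $\mathbf{B} = R_2 B_1 + B_2$ as $b = R_2 b_1 + b_2$ gives $(R_2 R_1)^{-1}b = R_1^{-1}b_1 + (R_2 R_1)^{-1}b_2$, so $\wh{\delta}_{\mathbf{R}^{-1}\mathbf{B}}$ splits as the product $\wh{\delta}_{R_1^{-1}B_1}\,\wh{\delta}_{(R_2 R_1)^{-1}B_2}$. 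Since $\delta_{(R_2 R_1)^{-1}B_2}$ is the pushforward of $\delta_{R_2^{-1}B_2}$ under $x \mapsto R_1^{-1}x$, Lemma~\ref{lemma-affine-invariant} (with $M = R_1^{-1}$, $b = \mathbf{0}$) gives it the spectrum $(M^{\mathrm{T}})^{-1}L_2 = R_1^{\mathrm{T}}L_2$. The integrality hypothesis of Lemma~\ref{lemma-spectrality-convolution} is then checked by $(R_1^{-1}b_1)\cdot(R_1^{\mathrm{T}}\ell_2) = b_1 \cdot \ell_2 \in \Z$ for $b_1 \in B_1$, $\ell_2 \in L_2$, and that lemma produces the spectrum $L_1 + R_1^{\mathrm{T}}L_2$ for the convolution. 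As this set lies in $\Z^d$, the pair $(\mathbf{R},\mathbf{B})$ is admissible by definition. The general $n$ then follows by induction: applying the $n=2$ step to the pair $(R_{n-1}\cdots R_1,\mathbf{B}')$ assembled from the first $n-1$ pairs and to $(R_n, B_n)$, and using $(R_{n-1}\cdots R_1)^{\mathrm{T}} = R_1^{\mathrm{T}}\cdots R_{n-1}^{\mathrm{T}}$ to identify the final summand $(R_1^{\mathrm{T}}\cdots R_{n-1}^{\mathrm{T}})L_n$.

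The main obstacle I anticipate lies entirely in part (iii): keeping the matrix products and their transposes consistent through the factorization and the induction, and confirming that the pushforward identification of the spectrum $R_1^{\mathrm{T}}L_2$ together with the integrality computation match the hypotheses of Lemma~\ref{lemma-spectrality-convolution} exactly. By contrast, parts (i) and (ii) are immediate once the $R^{\mathrm{T}}\Z^d$-periodicity of $\wh{\delta}_{R^{-1}B}$ is combined with Theorem~\ref{criterion}.
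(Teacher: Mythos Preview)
The paper does not actually prove this lemma; it simply states the result and refers the reader to \cite{Dutkay-Haussermann-Lai-2019, Laba-Wang-2002} for details. So there is no proof in the paper to compare against.

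Your argument is correct and self-contained, with one small point left implicit in part~(iii). When you write the factorization $\delta_{\mathbf{R}^{-1}\mathbf{B}} = \delta_{R_1^{-1}B_1} * \delta_{(R_2R_1)^{-1}B_2}$, you are tacitly assuming that the map $(b_1,b_2) \mapsto R_2 b_1 + b_2$ from $B_1 \times B_2$ to $\mathbf{B}$ is a bijection, i.e.\ that $\#\mathbf{B} = \#B_1 \cdot \#B_2$; otherwise the convolution is a measure with non-uniform weights rather than $\delta_{\mathbf{R}^{-1}\mathbf{B}}$. This is true, and follows by the dual of the argument you gave in part~(ii): if $R_2 b_1 + b_2 = R_2 b_1' + b_2'$ then $b_2 - b_2' \in R_2\Z^d$, and since the rows of the unitary matrix for $(R_2,B_2,L_2)$ indexed by congruent $b_2,b_2'$ would coincide, admissibility forces $b_2 = b_2'$ and hence $b_1 = b_1'$. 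The same reasoning (this time using part~(ii) for $L_1$ directly) shows that $L_1 + R_1^{\mathrm{T}}L_2$ has cardinality $\#L_1 \cdot \#L_2$, which is implicitly needed when you invoke Lemma~\ref{lemma-spectrality-convolution}. Once you record these two injectivity checks, your proof is complete.
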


Next, we state the definition of equi-positivity, and then prove Theorem~\ref{theorem-spectrality}.

\begin{definition}\label{def-equipositive}
  We call $\Phi \sse \mcal{P}(\R^d)$ an equi-positive family if there exists $\ep>0$ and $\gamma>0$ such that for  $x\in [0,1)^d$ and $\mu\in \Phi$, there is an integral vector $k_{x,\mu} \in \Z^d$ such that
  $$ |\wh{\mu}(x+y+k_{x,\mu})| \ge \ep$$
  for all $|y| <\gamma,$ where $k_{x,\mu} ={\bf 0}$ for $x={\bf 0}$.
\end{definition}

\begin{theorem}\label{theorem-spectrality-2}
  Let $\{(R_n,B_n)\}_{n=1}^\infty$ be a sequence of admissible pairs in $\R^d$.
  Suppose that the infinite convolution $\mu$ defined in \eqref{infinite-convolution} exists, and
  \[ \lim_{n \to \f} \|R_1^{-1} R_2^{-1} \cdots R_n^{-1}\| = 0. \]
  Let $\{\nu_{>n}\}$ be defined in \eqref{ms_nugen}.
  If there exists a subsequence $\{ \nu_{>n_j} \}$ which is an equi-positive family,  then $\mu$ is a spectral measure with a spectrum in $\Z^d$.
\end{theorem}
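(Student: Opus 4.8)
The plan is to apply the criterion of Theorem \ref{criterion}: I will exhibit a countable $\Lambda\sse\Z^d$ with $\mathbf 0\in\Lambda$ such that $\{e_\lambda:\lambda\in\Lambda\}$ is orthonormal in $L^2(\mu)$ and $Q(\xi)=\sum_{\lambda\in\Lambda}|\wh\mu(\lambda+\xi)|^2\equiv1$. Write $\mathbf R_n=R_n\cdots R_1$, and recall: $\mu=\mu_n*\mu_{>n}$ with $\wh{\mu_{>n}}(\eta)=\wh{\nu_{>n}}(\mathbf R_n^{-\mathrm T}\eta)$; $\mu_n=\delta_{\mathbf R_n^{-1}\mathbf B_n}$ is spectral with a spectrum inside $\Z^d$, obtained from spectra of the individual $\delta_{R_k^{-1}B_k}$ by Lemma \ref{lemma-HT}(iii); and any such spectrum may be replaced by a congruent set modulo $\mathbf R_n^{\mathrm T}\Z^d$ by Lemma \ref{lemma-HT}(ii). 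Fix the constants $\ep,\gamma>0$ witnessing equi-positivity of $\{\nu_{>n_j}\}$ and, after passing to a subsequence, assume $n_1<n_2<\cdots$.

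\emph{Step 1: the candidate spectrum.} I would construct inductively an increasing chain $E_1\sse E_2\sse\cdots$ of finite subsets of $\Z^d$ with $\mathbf 0\in E_j$, each $E_j$ a spectrum of $\mu_{n_j}$, such that
\begin{equation}\tag{$\star$}
|\wh{\mu_{>n_j}}(\lambda+\xi)|\ \ge\ \ep\qquad\text{whenever }\lambda\in E_j\text{ and }\|\mathbf R_{n_j}^{-1}\|\,|\xi|<\gamma/2 .
\end{equation}
Given $E_{j-1}$, first pick $n_j$ from the prescribed subsequence so large that $\|\mathbf R_{n_j}^{-1}\|\max_{\alpha\in E_{j-1}}|\alpha|<\gamma/2$ (possible since $\|\mathbf R_n^{-1}\|=\|R_1^{-1}\cdots R_n^{-1}\|\to0$). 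Put $E_j=E_{j-1}+\mathbf R_{n_{j-1}}^{\mathrm T}\wt L$, where $\wt L\sse\Z^d$ is a spectrum of the block measure $\sigma=\delta_{R_{n_{j-1}+1}^{-1}B_{n_{j-1}+1}}*\cdots*\delta_{(R_{n_j}\cdots R_{n_{j-1}+1})^{-1}B_{n_j}}=\delta_{\mathbf S^{-1}\mathbf C}$, $\mathbf S=R_{n_j}\cdots R_{n_{j-1}+1}$; note $\mathbf R_{n_j}^{-\mathrm T}\mathbf R_{n_{j-1}}^{\mathrm T}=\mathbf S^{-\mathrm T}$. Using Lemma \ref{lemma-HT}(ii) I select the representative $\ell$ of each residue class of $\wt L$ modulo $\mathbf S^{\mathrm T}\Z^d$ so that $\mathbf S^{-\mathrm T}\ell=x_\ell+k_\ell$, with $x_\ell\in[0,1)^d$ the (class-determined) fractional part and $k_\ell=k_{x_\ell,\nu_{>n_j}}\in\Z^d$ the vector from equi-positivity; the class of $\mathbf0$ keeps the representative $\mathbf0$, so $E_{j-1}\sse E_j$. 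Then for $\lambda=\alpha+\mathbf R_{n_{j-1}}^{\mathrm T}\ell\in E_j$ one has $\mathbf R_{n_j}^{-\mathrm T}(\lambda+\xi)=x_\ell+k_\ell+y$ with $y=\mathbf R_{n_j}^{-\mathrm T}(\alpha+\xi)$ of norm $<\gamma$, so equi-positivity gives $|\wh{\nu_{>n_j}}(\mathbf R_{n_j}^{-\mathrm T}(\lambda+\xi))|\ge\ep$, which is $(\star)$. Set $\Lambda=\bigcup_jE_j\sse\Z^d$.

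\emph{Step 2: orthonormality and a uniform lower bound.} Distinct $\lambda,\lambda'\in\Lambda$ lie in a common $E_j$, so $\wh{\mu_{n_j}}(\lambda-\lambda')=0$ and hence $\wh\mu(\lambda-\lambda')=\wh{\mu_{n_j}}(\lambda-\lambda')\wh{\mu_{>n_j}}(\lambda-\lambda')=0$; together with $\wh\mu(\mathbf0)=1$ this gives orthonormality, and Bessel's inequality gives $Q\le1$. For fixed $\xi$ and $j$ large enough that $\|\mathbf R_{n_j}^{-1}\|\,|\xi|<\gamma/2$, since $E_j$ is a spectrum of $\mu_{n_j}$ and $E_j\sse\Lambda$,
\[
Q(\xi)\ \ge\ \sum_{\lambda\in E_j}|\wh{\mu_{n_j}}(\lambda+\xi)|^2|\wh{\mu_{>n_j}}(\lambda+\xi)|^2\ \ge\ \ep^2\sum_{\lambda\in E_j}|\wh{\mu_{n_j}}(\lambda+\xi)|^2\ =\ \ep^2 ,
\]
so $\ep^2\le Q(\xi)\le1$ for every $\xi\in\R^d$.

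\emph{Step 3: the main obstacle.} The crux is to upgrade $Q\ge\ep^2$ to $Q\equiv1$. I expect to argue via the transfer identity obtained by peeling off one admissible pair: writing $\Lambda=L_1'+R_1^{\mathrm T}\Lambda'$ and using $R_1^{\mathrm T}\Z^d$-periodicity of $\wh{\delta_{R_1^{-1}B_1}}$ one gets $1-Q(\xi)=\sum_{\ell\in L_1'}|\wh{\delta_{R_1^{-1}B_1}}(\ell+\xi)|^2\,\big(1-Q^{\nu_{>1}}_{\Lambda'}(R_1^{-\mathrm T}(\ell+\xi))\big)$, with nonnegative weights summing to $1$, and likewise at every level. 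Thus the deficiency $D=1-Q$ — a nonnegative function $\le1-\ep^2$, vanishing at $\mathbf0$ since $Q(\mathbf0)=1$ — is fixed by the averaging operator of the contracting dual dynamics associated to $\{R_n\}$, and iterating this together with $\|\mathbf R_n^{-1}\|\to0$ should force $D\equiv0$. Carrying this out rigorously — handling the lower semicontinuity of $Q$ and the $\R^d$ matrix bookkeeping (fractional parts in $[0,1)^d$ and the identities for products of the $R_n^{\mathrm T}$) — is where the real work lies, and follows the pattern of the one-dimensional arguments in \cite{An-Fu-Lai-2019,Lu-Dong-Zhang-2022,LMW21}. Once $Q\equiv1$, Theorem \ref{criterion} gives that $\Lambda\sse\Z^d$ is a spectrum of $\mu$.
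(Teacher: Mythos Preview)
Your Steps~1 and~2 are essentially the paper's construction: the chain $E_j$ of spectra of $\mu_{n_j}$, with the representative of each residue class in the block spectrum adjusted via Lemma~\ref{lemma-HT}(ii) and the equi-positivity vectors, is exactly how the paper builds its $\Lambda_j$, and your estimate~$(\star)$ is the paper's key inequality.

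The gap is in Step~3. You do not need the transfer/averaging argument at all, and the paper does not use it. Your own estimate~$(\star)$ already does all the work: for fixed $\xi$ and $j$ large enough that $\|\mathbf R_{n_j}^{-1}\|\,|\xi|<\gamma/2$, $(\star)$ says $|\wh{\mu_{>n_j}}(\lambda+\xi)|\ge\ep$ for every $\lambda\in E_j$, hence
\[
|\wh\mu(\lambda+\xi)|^2\ =\ |\wh{\mu_{n_j}}(\lambda+\xi)|^2\,|\wh{\mu_{>n_j}}(\lambda+\xi)|^2\ \ge\ \ep^2\,|\wh{\mu_{n_j}}(\lambda+\xi)|^2.
\]
Setting $f(\lambda)=|\wh\mu(\lambda+\xi)|^2$ and $f_j(\lambda)=|\wh{\mu_{n_j}}(\lambda+\xi)|^2\cdot\mathbf 1_{E_j}(\lambda)$ on $\Lambda$, this reads $f_j\le\ep^{-2}f$ pointwise for all large $j$. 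Since $\mu_{n_j}\to\mu$ weakly, $f_j\to f$ pointwise; and $f$ is summable because $Q(\xi)\le1$. Dominated convergence with respect to counting measure on $\Lambda$ then gives
\[
Q(\xi)=\sum_{\lambda\in\Lambda}f(\lambda)=\lim_{j\to\infty}\sum_{\lambda\in E_j}|\wh{\mu_{n_j}}(\lambda+\xi)|^2=1,
\]
since each $E_j$ is a spectrum of $\mu_{n_j}$. That finishes the proof immediately. Your proposed Step~3, besides being only sketched, would also require $\Lambda$ to factor as $L_1'+R_1^{\mathrm T}\Lambda'$ at \emph{every} level, which your construction (with adjustments made only at the indices $n_j$) does not obviously guarantee.
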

\begin{proof}
  Since $\{(R_n,B_n)\}_{n=1}^\infty$ is a sequence of admissible pairs in $\R^d$, by definition, for each $n \ge 1$, the discrete measure $\delta_{R_n^{-1} B_n}$ admits a spectrum $L_n \sse \Z^d$. By Lemma \ref{lemma-HT} $(\mathrm{i})$,  we may assume that ${\bf 0}\in L_n$ for all $n\ge 1$.

  Since the family $\{\nu_{>n_j}\}$ is equi-positive, there exists $\ep>0$ and $\gamma>0$ such that for $x\in [0,1)^d$ and $j \ge 1$, there is an integral vector $k_{x,j} \in \Z^d$ such that
  $$ |\wh{\nu}_{>n_j}(x+y+k_{x,j})| \ge \ep$$
  for all $|y| <\gamma,$
  and $k_{x,j}={\bf 0}$ for $x={\bf 0}$.

  For $q > p \ge 0$, we define $\mathbf{R}_{p,q} = R_q R_{q-1}\cdots R_{p+1} $,
  $$ \mathbf{B}_{p,q}= R_q R_{q-1}\cdots R_{p+2} B_{p+1} + R_q R_{q-1}\cdots R_{p+3} B_{p+2} + \cdots + R_q B_{q-1} + B_q, $$
  and $$\mathbf{L}_{p,q} = L_{p+1} + R_{p+1}^{\mathrm{T}} L_{p+2} + \cdots + ( R_{p+1}^{\mathrm{T}} R_{p+2}^{\mathrm{T}} \cdots R_{q-1}^{\mathrm{T}}) L_q.$$
 By Lemma \ref{lemma-HT} $(\mathrm{iii})$, $\mathbf{L}_{p,q}$ is a spectrum of $\delta_{ \mathbf{R}_{p,q}^{-1} \mathbf{B}_{p,q} }$.

  We construct a sequence of finite subsets $\Lambda_j \sse \Z^d$ for $j \ge 1$ by induction.
  Let $m_1 = n_1$ and $\Lambda_1 = \mathbf{L}_{0, m_1}$. Note that ${\bf 0}\in \Lambda_1$ and $\Lambda_1$ is a spectrum of $\mu_{m_1}$.
  For $j \ge 2$, suppose that $\Lambda_{j-1}$ has been defined with ${\bf 0} \in \Lambda_{j-1}$ and $\Lambda_{j-1}$ is a spectrum of $\mu_{m_{j-1}}$.
  Since $\| (R_1^T R_2^T \cdots R_n^T)^{-1}\| = \| R_1^{-1} R_2^{-1} \cdots R_n^{-1}\| $ tends to $0$ as $n \to \f$,
  we may choose a sufficiently  large  integer $m_j$ in the sequence $\set{n_j}$ such that $m_j > m_{j-1}$ and for all $ \lambda \in \Lambda_{j-1}$,
  \begin{equation}\label{ineqla}
   \Big| \big(R_1^\mathrm{T} R_2^\mathrm{T} \cdots R_{m_j}^\mathrm{T}\big)^{-1} \lambda \Big| < \frac{\gamma}{2}.
  \end{equation}
  Now we define
  \begin{equation} \label{defLam}
    \Lambda_j = \Lambda_{j-1} + \mathbf{R}_{0,m_{j-1}}^{\mathrm{T}} \set{ \lambda + \mathbf{R}_{m_{j-1}, m_j}^{\mathrm{T}} k_{\lambda,j}  : \lambda \in \mathbf{L}_{m_{j-1}, m_j} },
  \end{equation}
  where, by the equi-positivity of $\{ \nu_{>n_j} \}$, the integral vectors $k_{\lambda,j}\in \Z^d$ are chosen to satisfy
  \begin{equation}\label{lowerbound}
    \left| \wh{\nu}_{>m_j}\left( (R_{m_{j-1}+1}^\mathrm{T} \cdots R_{m_j}^\mathrm{T})^{-1} \lambda + y + k_{\lambda,j} \right) \right|\ge \ep
  \end{equation}
  for all $|y|<\gamma$,  and $k_{\lambda,j} ={\bf 0}$ for $\lambda={\bf 0}$.
  Since $\Lambda_{j-1}$ is a spectrum of $\mu_{m_{j-1}} = \delta_{\mathbf{R}_{0,m_{j-1}}^{-1} \mathbf{B}_{0,m_{j-1}}}$, by Lemma \ref{lemma-HT} (ii) and (iii), we conclude that $\Lambda_j$ is a spectrum of
  $$\mu_{m_j} = \delta_{\mathbf{R}_{0,m_{j-1}}^{-1} \mathbf{B}_{0,m_{j-1}}} * \delta_{(\mathbf{R}_{m_{j-1}, m_j} \mathbf{R}_{0,m_{j-1}} )^{-1} \mathbf{B}_{m_{j-1},m_j}}. $$
  Since ${\bf 0}\in \mathbf{L}_{m_{j-1}, m_j}$ and ${\bf 0}\in \Lambda_{j-1}$, we have ${\bf 0}\in \Lambda_j$ and $\Lambda_{j-1} \sse \Lambda_j$.

  We write $$\Lambda = \bigcup_{j=1}^\f \Lambda_j,$$ and prove that $\Lambda $ is a spectrum of $\mu$.
  By Theorem~\ref{criterion}, it is equivalent to show that for all $ \xi \in \R^d$, $$Q(\xi) = \sum_{\lambda \in \Lambda} |\wh{\mu}(\lambda + \xi)|^2=1.$$

  For $\xi\in \R^d$, since $\Lambda_j$ is a spectrum of $\mu_{m_j}$, by Theorem~\ref{criterion}, we have
  \begin{equation}\label{mu-m-j}
    \sum_{\lambda \in \Lambda_j} \left| \wh{\mu}_{m_j}(\lambda + \xi) \right|^2 =1.
  \end{equation}
  It follows that
  \begin{align*}
    \sum_{\lambda \in \Lambda_j} \left| \wh{\mu}(\lambda + \xi) \right|^2
    &= \sum_{\lambda \in \Lambda_j} \left|\wh{\mu}_{m_j}(\lambda + \xi) \right|^2 \left|\wh{\mu}_{>m_j}(\lambda + \xi)\right|^2 \\
    &\le \sum_{\lambda \in \Lambda_j} \left| \wh{\mu}_{m_j}(\lambda + \xi) \right|^2 \\
    &\le 1.
  \end{align*}
  Letting $j$ tend to the infinity, we obtain that
  \begin{equation}\label{ineqQ}
    Q(\xi) \le 1
  \end{equation}
  for all  $\xi \in \R^d$.

  Fix $\xi\in \R^d$.  We define
  $$f(\lambda) = |\wh{\mu}(\lambda+ \xi)|^2, \;\lambda \in \Lambda,$$
  and
  $$ f_j(\lambda) =
     \begin{cases}
       |\wh{\mu}_{m_j}(\xi+\lambda)|^2, & \mbox{if } \lambda \in \Lambda_j, \\
       0, & \mbox{if } \lambda \in \Lambda \sm \Lambda_j,
     \end{cases}
  $$
  for each $j \ge 1$.
  For each $\lambda \in \Lambda$, there exists  $j_\lambda \ge 1$ such that $\lambda \in \Lambda_j$ for $j \ge j_\lambda$, and hence $$ \lim_{j \to \f} f_{j}(\lambda) =  \lim_{j \to \f} |\wh{\mu}_{m_j}(\xi+\lambda)|^2 =f(\lambda), $$
  where the last equality follows from Theorem \ref{thm_wkcvg} and the fact that $\{\mu_{m_j}\}$ converges weakly to $\mu$.
  Choose an integer $j_0 \ge 1$ sufficiently large such that for $j > j_0$
  \begin{equation}\label{ineqxi}
    \left| (R_1^\mathrm{T} R_2^\mathrm{T} \cdots R_{m_{j}}^\mathrm{T})^{-1} \xi \right| < \frac{\gamma}{2}.
  \end{equation}
  For each $\lambda \in \Lambda_j$ where $j > j_0$, by~\eqref{defLam}, we have that
  $$\lambda= \lambda_1 + (R_{m_{j-1}} \cdots R_2 R_1)^{\mathrm{T}} \lambda_2 + (R_{m_{j}} \cdots R_2 R_1)^{\mathrm{T}} k_{\lambda_2,j},$$
  where $\lambda_1 \in \Lambda_{j-1}$ and $\lambda_2\in \mathbf{L}_{m_{j-1}, m_j}$.
  By ~\eqref{ineqla} and \eqref{ineqxi}, we have that
  $$ \left| (R_1^\mathrm{T} R_2^\mathrm{T} \cdots R_{m_j}^\mathrm{T})^{-1}(\lambda_1+\xi) \right| < \gamma. $$
  It follows from \eqref{lowerbound} that
  \begin{align*}
    f(\lambda) & = |\wh{\mu}(\lambda + \xi)|^2 =  \left| \wh{\mu}_{m_j}(\lambda + \xi)\right|^2 \left| \wh{\mu}_{>m_j}(\lambda + \xi)\right|^2 \\
    &= \left| \wh{\mu}_{m_j}(\lambda + \xi)\right|^2 \left| \wh{\nu}_{>m_j}\left( (R_1^{\mathrm{T}} R_2^{\mathrm{T}} \cdots R_{m_j}^{\mathrm{T}})^{-1}(\lambda +\xi) \right) \right|^2 \\
    & = \left|\wh{\mu}_{m_j}(\lambda + \xi)\right|^2 \left| \wh{\nu}_{>m_j}\left( (R_{m_{j-1}+1}^\mathrm{T} \cdots R_{m_j}^\mathrm{T})^{-1} \lambda_2 + (R_1^\mathrm{T} \cdots R_{m_j}^\mathrm{T})^{-1}(\lambda_1+\xi) + k_{\lambda_2,j}\right) \right|^2 \\
    & \ge \ep^2 f_j(\lambda).
  \end{align*}
  Therefore, for $j > j_0$, $$f_j(\lambda) \le \ep^{-2} f(\lambda)$$
  for all $\lambda \in \Lambda$.

  Let $\rho$ be the counting measure on the set $\Lambda$.  We have that
  $$ \int_\Lambda f(\lambda) \D \rho(\lambda) = \sum_{\lambda \in \Lambda} |\wh{\mu}(\lambda + \xi)|^2 = Q(\xi) .
  $$
  By ~\eqref{ineqQ}, $f(\lambda)$ is integrable with respect to the counting measure $\rho$.
  Applying the dominated convergence theorem and \eqref{mu-m-j}, we obtain that
  \begin{align*}
    Q(\xi) &= \lim_{j \to \f} \int_\Lambda f_j(\lambda) \D \rho(\lambda)  \\
    &= \lim_{j \to \f} \sum_{\lambda \in \Lambda_j} |\wh{\mu}_{m_j}(\lambda + \xi)|^2   \\
    &=1.
  \end{align*}
  Hence, by Theorem \ref{criterion}, $\Lambda$ is a spectrum of $\mu$, and $\mu$ is a spectral measure.
\end{proof}

\section{Integral periodic zero set}\label{sec_IPZ}

First, we give the proof of Theorem~\ref{theorem-spectrality-emptyset}.
\begin{theorem}\label{theorem-spectrality-emptyset-2}
  Let $\{(R_n,B_n)\}_{n=1}^\infty$ be a sequence of admissible pairs in $\R^d$.
  Suppose that the infinite convolution $\mu$ defined in \eqref{infinite-convolution} exists, and
  \[ \lim_{n \to \f} \|R_1^{-1} R_2^{-1} \cdots R_n^{-1}\| = 0. \]
  Let $\{\nu_{>n}\}$ be defined in \eqref{ms_nugen}.
  If there exists a subsequence  $\{ \nu_{>n_j} \}$ which converges weakly to $\nu$ with $\mathcal{Z}(\nu) =\emptyset$, then $\mu$ is a spectral measure with a spectrum in $\Z^d$.
\end{theorem}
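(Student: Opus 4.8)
The plan is to reduce the statement to Theorem \ref{theorem-spectrality-2} by showing that, after discarding finitely many terms, the subsequence $\{\nu_{>n_j}\}$ becomes an equi-positive family in the sense of Definition \ref{def-equipositive}. The remaining hypotheses (existence of $\mu$ and $\|R_1^{-1}R_2^{-1}\cdots R_n^{-1}\|\to 0$) are identical to those of Theorem \ref{theorem-spectrality-2}, so once equi-positivity of some tail $\{\nu_{>n_j}: j\ge J\}$ is in hand the conclusion follows at once.

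\emph{Step 1.} First I would extract a uniform lower bound for $|\wh{\nu}|$ on a neighbourhood of $[0,1]^d$. Since $\wh{\nu}$ is continuous and $\wh{\nu}(\mathbf{0})=1$, continuity gives $\gamma_{\mathbf{0}}>0$ with $|\wh{\nu}(y)|\ge 3/4$ for $|y|<\gamma_{\mathbf{0}}$; set $k_{\mathbf{0}}=\mathbf{0}$. For each $x\in[0,1]^d\setminus\{\mathbf{0}\}$, the hypothesis $\mathcal{Z}(\nu)=\emptyset$ produces $k_x\in\Z^d$ with $\wh{\nu}(x+k_x)\ne 0$, and continuity gives $\ep_x,\gamma_x>0$ with $|\wh{\nu}(\xi+k_x)|\ge\ep_x$ whenever $|\xi-x|<\gamma_x$. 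The balls $U(x,\gamma_x/2)$, $x\in[0,1]^d$, cover the compact cube $[0,1]^d$, so by Heine--Borel I take a finite subcover with centres $x_1=\mathbf{0},x_2,\dots,x_N$ and set $\ep_0=\min_{1\le i\le N}\ep_{x_i}$ (with $\ep_{x_1}:=3/4$) and $\gamma_0=\min_{1\le i\le N}\gamma_{x_i}/2$. Then for every $x\in[0,1)^d$ there is $i$ with $|x-x_i|<\gamma_{x_i}/2$, so for all $|y|<\gamma_0$ one has $|(x+y)-x_i|<\gamma_{x_i}$, hence $|\wh{\nu}(x+y+k_{x_i})|\ge\ep_0$; and when $x=\mathbf{0}$ one may use $i=1$, so that $k_{x_i}=\mathbf{0}$.

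\emph{Step 2.} Next I would transfer this bound to the measures $\nu_{>n_j}$. All the arguments $x+y+k_{x_i}$ occurring above lie in a fixed ball $\{|\xi|\le h\}$, with $h$ depending only on $d$, $\gamma_0$ and $\max_i|k_{x_i}|$. Since $\nu_{>n_j}$ converges weakly to $\nu$, the last assertion of Theorem \ref{thm_wkcvg} gives $\wh{\nu}_{>n_j}\to\wh{\nu}$ uniformly on $\{|\xi|\le h\}$, so there is $J$ with $|\wh{\nu}_{>n_j}(\xi)-\wh{\nu}(\xi)|<\ep_0/2$ for all $j\ge J$ and all $|\xi|\le h$. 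Combining the two steps, for every $j\ge J$, every $x\in[0,1)^d$ and every $|y|<\gamma_0$, choosing $i$ as in Step 1 gives $|\wh{\nu}_{>n_j}(x+y+k_{x_i})|\ge\ep_0/2$, with $k_{x_i}=\mathbf{0}$ at $x=\mathbf{0}$. Thus $\{\nu_{>n_j}:j\ge J\}$ is an equi-positive family with constants $\ep_0/2$ and $\gamma_0$, and since it is still a subsequence of $\{\nu_{>n}\}$, Theorem \ref{theorem-spectrality-2} shows that $\mu$ is a spectral measure with a spectrum in $\Z^d$.

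\emph{Main obstacle.} The substantive work is already carried out in Theorem \ref{theorem-spectrality-2}; in this reduction the only delicate point is obtaining constants $\ep_0,\gamma_0$ that are simultaneously independent of $x\in[0,1)^d$ (handled by compactness of the cube) and of the index $j$ (handled by the uniform-on-compacta convergence of Fourier transforms in Theorem \ref{thm_wkcvg}), while respecting the normalization $k_{\mathbf{0}}=\mathbf{0}$ required at the origin — which is precisely why $\mathbf{0}$ is taken as one of the centres of the finite subcover.
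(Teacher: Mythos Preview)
Your proposal is correct and follows essentially the same approach as the paper: both reduce to Theorem~\ref{theorem-spectrality-2} by showing that a tail of $\{\nu_{>n_j}\}$ is equi-positive, using compactness of $[0,1]^d$ to get uniform constants for $\wh{\nu}$ and then the uniform-on-compacta convergence in Theorem~\ref{thm_wkcvg} to transfer them to $\wh{\nu}_{>n_j}$. The only cosmetic difference is that you force $\mathbf{0}$ to be one of the subcover centres (which is legitimate since one can always adjoin the ball at $\mathbf{0}$ to any finite subcover), whereas the paper treats the case $x=\mathbf{0}$ by a separate estimate.
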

\begin{proof}
  By Theorem \ref{theorem-spectrality}, it suffices to show that the family $\{\nu_{> n_j}\}_{j=j_0}^\f$ is equi-positive for some large $j_0 \ge 1$.

  Since $\mathcal{Z}(\nu)=\emptyset$, for each $x\in [0,1]^d$, there exists $k_x\in \Z^d$ such that $\wh{\nu}(x+k_x) \ne 0$.
  Thus, there exists $\ep_x>0$ and $\gamma_x >0$ such that
  \begin{equation}\label{positive-1}
    |\wh{\nu}(x+k_x+y)|\ge \ep_x
  \end{equation}
  for all $|y|<\gamma_x$.
  Note that
  $$[0,1]^d \sse \bigcup_{x\in[0,1]^d} U(x,\gamma_x/2).$$
  By the compactness of $[0,1]^d$, there exist finitely many $x_1, x_2, \ldots, x_q \in [0,1]^d$ such that
  \begin{equation}\label{finite-cover}
    [0,1]^d \sse \bigcup_{\ell=1}^q U(x_\ell, \gamma_{x_\ell}/2).
  \end{equation}
  Since $\wh{\nu}(0) =1$ and $\wh{\nu}(\xi)$ is continuous, there exists $\gamma_0>0$ such that
  \begin{equation}\label{positive-2}
    |\wh{\nu}(y)| \ge 1/2
  \end{equation}
  for all $|y| <\gamma_0$.

  Let $\ep = \min\set{ 1/4, \ep_{x_1}/2, \ep_{x_2}/2, \ldots, \ep_{x_q}/2 }$ and $\gamma = \min \set{ \gamma_0, \gamma_{x_1}/2, \gamma_{x_2}/2, \ldots, \gamma_{x_q}/2 }$.
  Let $h = \sqrt{d} + \gamma+\max\set{|k_{x_1}|, |k_{x_2}|, \ldots, |k_{x_q}|}$.
  Since $\{ \nu_{>n_j} \}$ converges weakly to $\nu$, by Theorem \ref{thm_wkcvg}, we have $\wh{\nu}_{>n_j}(\xi)$ converges uniformly to $\wh{\nu}(\xi)$ for $|\xi| \le h$.
  Thus, there exists $j_0 \ge 1$ such that
  \begin{equation}\label{converge-uniformly}
    |\wh{\nu}_{>n_j}(\xi) - \wh{\nu}(\xi)| < \ep
  \end{equation}
  for all $j \ge j_0$ and all $|\xi| \le h$.

  For each $x\in [0,1)^d \setminus\{{\bf 0}\}$, by \eqref{finite-cover}, we may find $1\le \ell \le q$ such that $|x-x_\ell|<\gamma_{x_\ell}/2$.
  For $j \ge j_0$ and $|y|<\gamma$, noting that $|x+k_{x_\ell} +y|<h$, it follows from \eqref{converge-uniformly} that $$|\wh{\nu}_{>n_j}(x+k_{x_\ell} + y)| \ge |\wh{\nu}(x+k_{x_\ell} +y)| -\ep. $$
  Since $|x-x_\ell +y| < \gamma_{x_\ell}/2 +\gamma \le \gamma_{x_\ell} $, by \eqref{positive-1}, we have that $$|\wh{\nu}(x+k_{x_\ell} +y)|= |\wh{\nu}(x_\ell+k_{x_\ell} +x-x_\ell+y)|\ge \ep_{x_\ell} \ge 2\ep.$$
  Thus, for $j \ge j_0$ and $|y|<\gamma$, $$|\wh{\nu}_{>n_j}(x+k_{x_\ell} + y)| \ge \ep. $$
  For $x={\bf 0}$, it follows from \eqref{positive-2} and \eqref{converge-uniformly}  that for $j \ge j_0$ and for $|y| <\gamma$, $$|\wh{\nu}_{>n_j}(y)| \ge |\wh{\nu}(y)|-\ep \ge 1/4 \ge \ep.$$
  Therefore, the family $\{ \nu_{> n_j} \}_{j=j_0}^\f$ is equi-positive.
\end{proof}

Next, we study the integral periodic zero set of Fourier transform. Recall that the integral periodic zero set is given by
\[ \mcal{Z}(\mu) = \set{\xi \in \R^d: \wh{\mu}(\xi+k) = 0 \text{ for all } k \in \Z^d}. \]

Let $\mathbb{T}^d = \R^d / \Z^d$, and we write $\mathcal{M}(\mathbb{T}^d)$ for the set of all complex Borel measures on $\mathbb{T}^d$.
The following theorem is the uniqueness theorem of Fourier coefficients which is often contained in Fourier analysis or harmonic analysis.
Here we give a proof for the readers' convenience.

\begin{theorem}\label{thm_fcof}
  Let $\nu \in \mathcal{M}(\mathbb{T}^d)$.
  If the Fourier coefficient
  $$
  \widehat{\nu}(k) = \int_{\mathbb{T}^d} e^{-2\pi i k \cdot x} \D \nu(x) =0
  $$
  for all $k \in \Z^d$, then $\nu=0$.
\end{theorem}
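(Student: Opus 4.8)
The plan is to deduce the statement from the density of trigonometric polynomials in $C(\mathbb{T}^d)$ together with the uniqueness part of the Riesz representation theorem. Write $\mathcal{T}$ for the linear span of $\{e_k : k \in \Z^d\}$, where $e_k(x) = e^{2\pi i k\cdot x}$, regarded as functions on $\mathbb{T}^d$. First I would note that $\mathcal{T}$ is a subalgebra of $C(\mathbb{T}^d)$ (since $e_k e_\ell = e_{k+\ell}$) which contains the constant functions, is closed under complex conjugation (since $\overline{e_k} = e_{-k}$), and separates the points of $\mathbb{T}^d$ (if $x \ne y$ in $\mathbb{T}^d$ then $x-y \notin \Z^d$, so $k\cdot(x-y)\notin\Z$ for some $k$, whence $e_k(x)\ne e_k(y)$). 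Hence, by the complex form of the Stone--Weierstrass theorem, $\mathcal{T}$ is dense in $C(\mathbb{T}^d)$ in the uniform norm.

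Next, by linearity of the integral, the hypothesis $\widehat{\nu}(k) = 0$ for all $k \in \Z^d$ gives $\int_{\mathbb{T}^d} P \D \nu = 0$ for every $P \in \mathcal{T}$. Since $\nu$ is a finite complex Borel measure, its total variation $|\nu|$ is a finite positive measure, so the functional $f \mapsto \int_{\mathbb{T}^d} f \D \nu$ is bounded on $C(\mathbb{T}^d)$, with $\big|\int_{\mathbb{T}^d} f \D \nu\big| \le \|f\|_\infty\, |\nu|(\mathbb{T}^d)$. Combining this boundedness with the density from the previous step, $\int_{\mathbb{T}^d} f \D \nu = 0$ for every $f \in C(\mathbb{T}^d)$.

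Finally I would invoke the uniqueness in the Riesz representation theorem on the compact metric space $\mathbb{T}^d$: the only complex Borel measure annihilating all of $C(\mathbb{T}^d)$ is the zero measure, so $\nu = 0$. If one prefers to avoid quoting Riesz uniqueness directly, the same conclusion follows by a short regularity argument: given a Borel set $E\sse\mathbb{T}^d$ and $\ep > 0$, choose by regularity of $|\nu|$ a closed set $F$ and an open set $U$ with $F \sse E \sse U$ and $|\nu|(U\sm F) < \ep$; then pick by Urysohn's lemma $f \in C(\mathbb{T}^d)$ with $\mathbf{1}_F \le f \le \mathbf{1}_U$; since $\int_{\mathbb{T}^d} f\D\nu = 0$ one gets $|\nu(E)| = \big|\int_{\mathbb{T}^d}(\mathbf{1}_E - f)\D\nu\big| \le |\nu|(U\sm F) < \ep$, hence $\nu(E) = 0$, and $E$ being arbitrary forces $\nu = 0$.

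The argument is essentially routine; the only points requiring care are the use of the \emph{conjugation-closed} version of Stone--Weierstrass (needed because we work with complex-valued functions) and the bookkeeping that $\nu$ being a complex rather than a positive measure is harmless, since one only needs $|\nu|$ finite for the boundedness estimate. I expect no serious obstacle beyond these standard checks.
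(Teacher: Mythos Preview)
Your proof is correct and follows essentially the same route as the paper: Stone--Weierstrass gives density of trigonometric polynomials in $C(\mathbb{T}^d)$, the vanishing of all Fourier coefficients then forces $\int f\D\nu=0$ for every $f\in C(\mathbb{T}^d)$, and the Riesz representation theorem yields $\nu=0$. The paper's version is terser, while you spell out the Stone--Weierstrass hypotheses and add an optional regularity/Urysohn argument in lieu of Riesz uniqueness, but the core strategy is identical.
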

\begin{proof}
   By the Stone-Weierstrass theorem, every continuous function on $\mathbb{T}^d$ can be approximated uniformly by trigonometric polynomials.
   Since all Fourier coefficients $\wh{\nu}(k) =0$ for $k \in \Z^d$, it follows that
   $$ \int_{\mathbb{T}^d} f(x) \D \nu(x) =0 $$
   for every continuous function $f$ on $\mathbb{T}^d$.
   By the Riesz representation theorem \cite[Theorem 6.19]{Rudin-1987}, we conclude that $\nu =0$.
\end{proof}

We apply the uniqueness theorem of Fourier coefficients to prove Theorem \ref{theorem-emptyset}.
\begin{theorem}\label{theorem-emptyset-2}
Let $\mu \in \mathcal{P}(\R^d)$.
Suppose there exists a Borel subset $E \sse \R^d$ such that $\mu(E)>0$, and $$ \mu( E+k ) =0 $$ for all $k \in \Z^d\setminus\{{\bf 0}\}$.
 Then we have $\mcal{Z}(\mu) = \emptyset$.
\end{theorem}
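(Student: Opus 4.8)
The plan is to reduce the claim to the uniqueness theorem of Fourier coefficients (Theorem \ref{thm_fcof}) applied to a suitable measure on $\mathbb{T}^d$. Fix $\xi \in \R^d$; I want to show that $\wh{\mu}(\xi + k) \ne 0$ for some $k \in \Z^d$. Consider the finite complex Borel measure on $\R^d$ given by $\D\sigma(x) = e^{-2\pi i \xi \cdot x}\D\mu(x)$, and let $\ol{\sigma}$ be its push-forward under the quotient map $\R^d \to \mathbb{T}^d = \R^d/\Z^d$. Its Fourier coefficients are exactly $\wh{\ol{\sigma}}(k) = \int_{\mathbb{T}^d} e^{-2\pi i k\cdot x}\D\ol{\sigma}(x) = \int_{\R^d} e^{-2\pi i (\xi+k)\cdot x}\D\mu(x) = \wh{\mu}(\xi+k)$ for $k\in\Z^d$. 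So if $\xi \in \mcal{Z}(\mu)$, then all Fourier coefficients of $\ol{\sigma}$ vanish, and Theorem \ref{thm_fcof} forces $\ol{\sigma} = 0$.

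The next step is to derive a contradiction from $\ol{\sigma} = 0$ using the separation hypothesis on $E$. The key observation is that $\R^d$ decomposes (up to a Lebesgue-null, hence potentially $\mu$-charged, boundary issue that must be handled) as a disjoint union $\bigsqcup_{k\in\Z^d}(E_0 + k)$ where $E_0$ is a fundamental-domain representative; more precisely, writing $F = \bigcup_{k\in\Z^d}(E+k)$, the hypothesis $\mu(E+k)=0$ for $k\ne \mathbf{0}$ together with $\mu(E)>0$ says that among the translates of $E$ only the $k=\mathbf{0}$ copy carries mass. I would choose a Borel set $E' \sse E$ that maps injectively into $\mathbb{T}^d$ (e.g. intersect $E$ with a fundamental domain, or simply note that the translates $E+k$ for distinct $k$ are disjoint modulo a $\mu$-null set by the hypothesis, after replacing $E$ by $E \setminus \bigcup_{k\ne\mathbf 0}(E+k)$, which still has positive $\mu$-measure) so that the quotient map restricted to $E'$ is injective and $\mu(E')>0$. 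Then for any Borel $A \sse E'$, since $A+k$ is disjoint from $E'$ for $k \ne \mathbf 0$, the push-forward $\ol\sigma$ restricted to the image of $E'$ just equals the push-forward of $\sigma|_{E'}$, i.e. $\ol\sigma(\pi(A)) = \int_A e^{-2\pi i\xi\cdot x}\D\mu(x)$. Taking $A$ to be a small ball around a Lebesgue-density point of $E'$ (with respect to $\mu$) on which $e^{-2\pi i \xi\cdot x}$ is nearly constant, this integral is nonzero for a sufficiently small such ball, contradicting $\ol\sigma = 0$.

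I expect the main obstacle to be the bookkeeping in the second step: carefully arranging that a positive-$\mu$-measure piece of $E$ injects into the torus so that the push-forward to $\mathbb{T}^d$ genuinely "sees" that mass rather than having it cancel against mass on other translates. The hypothesis $\mu(E+k)=0$ for all $k\ne\mathbf 0$ is exactly what rules out such cancellation — it guarantees that the only translate of $E$ with mass is $E$ itself, so no phase cancellation across the fibre of $\pi$ can occur on $\pi(E')$. Once that is set up cleanly, concluding $\ol\sigma \ne 0$ (hence $\xi \notin \mcal{Z}(\mu)$, hence $\mcal Z(\mu)=\emptyset$ since $\xi$ was arbitrary) is immediate. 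An alternative, perhaps slicker route avoiding density points: since $\ol\sigma = 0$ means $\int_{\R^d} g(\pi(x)) e^{-2\pi i\xi\cdot x}\D\mu(x) = 0$ for every $g \in C(\mathbb{T}^d)$, and $\Z^d$-periodic functions separate the translates, one can approximate the indicator of $\pi(E')$ and again reach $\int_{E'} e^{-2\pi i\xi\cdot x}\D\mu(x) = 0$; combining this with the same identity for various modulations $e^{-2\pi i m\cdot x}$, $m\in\Z^d$, and invoking Theorem \ref{thm_fcof} once more on $E'$ forces $\mu(E') = 0$, the desired contradiction.
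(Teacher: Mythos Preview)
Your proposal is correct and follows essentially the same strategy as the paper: push the twisted measure $e^{-2\pi i\xi\cdot x}\D\mu$ down to $\mathbb{T}^d$, use Theorem~\ref{thm_fcof} to conclude it vanishes, and then exploit the separation hypothesis on $E$ to reach a contradiction. The paper's version is precisely your ``alternative, slicker route'' at the end (applying Theorem~\ref{thm_fcof} a second time to $\mu|_{E'}$), with the minor simplification that instead of forming $E' = E \setminus \bigcup_{k\ne\mathbf{0}}(E+k)$, it first intersects $E$ with a unit cube $C_{k_0}$ and translates so that $E \sse [0,1)^d$, making the injection into $\mathbb{T}^d$ automatic.
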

\begin{proof}
  For $k=(k_1, \ldots, k_d) \in \Z^d$, we write
  $$
  C_k=[k_1,k_1+1)\times\cdots \times [k_d, k_d+1).
  $$
  Since $\mu(E) >0$, there exists $k_0 \in \Z^d$ such that $\mu( E \cap C_{k_0})>0$.
  Replacing the set $E$ by $E\cap C_{k_0}$, we may assume that $E \sse C_{k_0}$ for some $k_0 \in \Z^d$.
  Let $\widetilde{E}= E - k_0$ and $\widetilde{\mu} = \mu*\delta_{\{-k_0\}}$. Then $\widetilde{E} \sse [0,1)^d$. Note that for all Borel subset $F \sse \R^d$, we have that
  $$
  \widetilde{\mu}(F) = \mu*\delta_{\{-k_0\}}(F) = \mu(F+k_0).
  $$
  It follows that $\widetilde{\mu}\big( \widetilde{E} \big) = \mu(E)>0$, and $\widetilde{\mu}\big( \widetilde{E} +k \big) = \mu(E +k)=0$ for all $k \in \Z^d \setminus \{0\}$. Since $\widehat{\widetilde{\mu}}(\xi) = e^{2 \pi i k_0 \cdot \xi} \widehat{\mu}(\xi)$, we have $\mathcal{Z} ( \widetilde{\mu} ) = \mathcal{Z}(\mu)$.
  Therefore, we assume that $E \sse [0,1)^d$ in the following.

  For $\xi \in \R^d$, we define a complex measure $\mu_\xi$ on $\R^d$ by
\begin{equation}\label{muxi}
\D \mu_\xi = e^{-2\pi i\xi \cdot x} \D \mu.
\end{equation}
Consider the natural homomorphism $\pi: \R^d \to \mathbb{T}^d$, and let $\rho_\xi = \mu_\xi \circ \pi^{-1}$ be the image measure on $\mathbb{T}^d$ of $\mu_\xi$ by $\pi$, i.e., for each Borel subset $F \subseteq \mathbb{T}^d$,
  \begin{equation}\label{rhoxi}
  \rho_\xi(F) = \mu_\xi(F+\Z^d) = \sum_{k\in \Z^d}\mu_\xi(F+k).
  \end{equation}
Since $\mu(E)>0$, we write
  $$
  \nu(\;\cdot\;) = \frac{1}{\mu(E)} \mu(\;\cdot\; \cap E)
  $$
  for the normalized measure of $\mu$ on $E$.

Suppose that $\mathcal{Z}(\mu) \ne \emptyset$.  Arbitrarily choose $\xi_0 \in \mathcal{Z}(\mu)$, and we have that $\widehat{\mu}(\xi_0+k) =0$ for all $k \in \Z^d$. This implies that
\begin{align*}
 \widehat{\rho}_{\xi_0}(k) & = \int_{\mathbb{T}^d} e^{-2\pi i k \cdot x} \D \mu_{\xi_0}\circ \pi^{-1}(x)  \\
    & = \int_{\R^d} e^{-2\pi i k \cdot \pi(x)} \D \mu_{\xi_0}(x) \\
    & = \int_{\R^d} e^{-2\pi i k \cdot x} e^{-2\pi i \xi_0 \cdot x} \D \mu(x) \\
    & = \widehat{\mu}(\xi_0+k)\\
    & = 0
\end{align*}
for all $k \in \Z^d$. By Theorem \ref{thm_fcof}, we conclude that $\rho_{\xi_0} =0$.
By~\eqref{muxi} and \eqref{rhoxi}, it follows that
$$\int_{E + \Z^d} e^{-2\pi i \xi_0 \cdot x} \D\mu(x)=\mu_{\xi_0}(E+\Z^d) = \rho_{\xi_0}(E) =  0.$$
Since $\mu(E + k) =0$ for all $k \in \Z^d \setminus\{{\bf 0}\}$, we obtain that $$ \int_E e^{-2\pi i \xi_0 \cdot x} \D\mu(x) =0.$$
It follows that $$\widehat{\nu}(\xi_0) = \frac{1}{\mu(E)} \int_E e^{-2\pi i \xi_0 \cdot x} \D\mu(x) =0. $$
Note that $\xi_0+k \in \mathcal{Z}(\mu)$ for all $k\in \Z^d$.
Thus, we have that for all $k \in \Z^d$,
$$
\widehat{\nu}(\xi_0+k) =0.
$$

Let $\nu_{\xi_0}$ be defined by
  \begin{equation}\label{nu-xi}
    \D \nu_{\xi_0} = e^{-2\pi i \xi_0 \cdot x} \D \nu.
  \end{equation}
  Since $\nu$ is concentrated on $E \sse [0,1)^d$, $\nu_{\xi_0}$ can be viewed as a complex measure on $\mathbb{T}^d$.  Moreover, the Fourier coefficients $\widehat{\nu}_{\xi_0}(k) = \widehat{\nu}(\xi_0+k)=0$ for all $k \in \Z^d$.
  By Theorem \ref{thm_fcof}, we have that $\nu_{\xi_0}=0$.
 But, by~\eqref{nu-xi} and Theorem 6.13 in \cite{Rudin-1987}, we have the total variation $|\nu_{\xi_0}| = \nu \ne 0$,
and this leads to a contradiction.

Therefore, we conclude that $\mathcal{Z}(\mu) =\emptyset$.
\end{proof}

The following conclusion is an immediate consequence. 

\begin{corollary}
  Let $\mu \in \mcal{P}(\R^d)$ with $\mathrm{spt}(\mu) \sse [0,1]^d$. If $\mcal{Z}(\mu) \ne \emptyset$, then we have \[ \mathrm{spt}(\mu) \sse [0,1]^d \setminus (0,1)^d. \]
\end{corollary}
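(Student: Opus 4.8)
The plan is to prove the contrapositive: assuming $\mathrm{spt}(\mu) \not\sse [0,1]^d \setminus (0,1)^d$, I will show that $\mcal{Z}(\mu) = \emptyset$ by invoking Theorem \ref{theorem-emptyset}. Since $\mathrm{spt}(\mu) \sse [0,1]^d$ by hypothesis, the negation of the desired inclusion says precisely that $\mathrm{spt}(\mu)$ meets the open cube $(0,1)^d$; so I fix a point $x_0 \in \mathrm{spt}(\mu) \cap (0,1)^d$.

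The set $E = (0,1)^d$ will serve as the witness required by Theorem \ref{theorem-emptyset}. First, $\mu(E) > 0$: since $(0,1)^d$ is an open neighbourhood of the support point $x_0$, it must have positive measure, by the definition of $\mathrm{spt}(\mu)$ as the smallest closed set of full measure. Second, for each $k = (k_1,\dots,k_d) \in \Z^d \setminus \{{\bf 0}\}$ I claim $\mu(E + k) = 0$. Indeed, some coordinate $k_i$ is nonzero, and then the $i$-th coordinate of every point of $(0,1)^d + k$ lies in the open interval $(k_i, k_i+1)$, which is disjoint from $[0,1]$; hence $(E+k) \cap [0,1]^d = \emptyset$, and since $\mu$ is concentrated on $\mathrm{spt}(\mu) \sse [0,1]^d$ we conclude $\mu(E+k) = 0$.

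Applying Theorem \ref{theorem-emptyset} with this choice of $E$ then yields $\mcal{Z}(\mu) = \emptyset$, which establishes the contrapositive and completes the proof. There is essentially no obstacle here: the corollary is just a repackaging of Theorem \ref{theorem-emptyset} (compare the Remark following it) for the special support $[0,1]^d$, and the only points that need a line of verification — the disjointness of the translated open cubes $(0,1)^d + k$ from $[0,1]^d$, and the positivity of $\mu$ on $(0,1)^d$ coming from $x_0 \in \mathrm{spt}(\mu)$ — are both immediate.
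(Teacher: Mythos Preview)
Your proof is correct and is precisely the intended argument: the paper states this corollary as an immediate consequence of Theorem \ref{theorem-emptyset} without giving a separate proof, and your choice $E=(0,1)^d$ (together with the observation that $(0,1)^d+k$ misses $[0,1]^d$ for every nonzero $k\in\Z^d$) is exactly what makes it immediate.
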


\begin{remark}
  It has been showed in \cite{An-Fu-Lai-2019} that for $\mu \in \mathcal{P}(\R)$ with $\mathrm{spt}(\mu) \sse [0,1]$, $\mathcal{Z}(\mu) \ne \emptyset$ if and only if $\mu = \frac{1}{2} \delta_0 + \frac{1}{2} \delta_1$.
\end{remark}

\section{A key ingredient}\label{sec_key-theorem}

In order to prove Theorem \ref{theorem-Rd-1} and Theorem \ref{theorem-Rd-2}, we need the following Theorem \ref{key-theorem} to show that the assumption in Theorem \ref{theorem-emptyset} is satisfied by a class of measures.
Recall that $\mcal{M}(\R^d)$ denotes the collection of all finite nonzero Borel measures on $\R^d$.

\begin{theorem}\label{key-theorem}
  Let $(R,B) \in \mcal{D}_d$ where $R = \mathrm{diag}(m_1, m_2, \ldots, m_d)$ with $m_1,m_2,\ldots,m_d \ge d+1$.
Associated with the digit set $B$, a positive weight vector $(p_b)_{b \in B}$ is given.
Let $\mu \in \mcal{M}(\R^d)$ with $\mathrm{spt}(\mu) \sse [0,1]^d$.
Set \[\nu = (\lambda* \mu) \circ R\;\;\text{where}\;\; \lambda = \sum_{b \in B} p_b \delta_{b}. \]
Then there exists a Borel subset $E\sse \R^d$ such that $\nu(E)>0$ and \[ \nu(E+n) =0  \] for all $n \in \Z^d \setminus \{{\bf 0}\}$.
\end{theorem}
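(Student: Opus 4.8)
\emph{Overall plan.} The asserted statement is exactly the hypothesis needed to apply Theorem~\ref{theorem-emptyset} to $\nu$, so the whole job is to construct the set $E$. I will reduce it to a geometric statement about $\mathrm{spt}(\nu)$ and then split into an ``interior'' case and a ``boundary'' case, the latter being where $m_i\ge d+1$ is genuinely used. \emph{Reduction:} it suffices to find a Borel $E$ with $\nu(E)>0$ and $\nu(E+n)=0$ for every $n\in\Z^d\sm\{{\bf 0}\}$. Writing $\phi_b(y)=R^{-1}(y+b)$ we have $\nu=\sum_{b\in B}p_b\,\big(\mu\circ\phi_b^{-1}\big)$, where each $\mu\circ\phi_b^{-1}$ is an affine copy of $\mu$ carried by the brick $Q_b:=\phi_b([0,1]^d)=\prod_i[\tfrac{b_i}{m_i},\tfrac{b_i+1}{m_i}]\sse[0,1]^d$, and crucially $\mathrm{int}(Q_b)\sse(0,1)^d$ since $0\le b_i\le m_i-1$. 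As $\mathrm{spt}(\nu)\sse[0,1]^d$, for $n\ne{\bf 0}$ the translate $E+n$ can meet $[0,1]^d$ only along $\partial[0,1]^d$, so everything is governed by how $\nu$ sits near $\partial[0,1]^d$.

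\emph{The easy case and the setup of the hard case.} If $\nu\big((0,1)^d\big)>0$, take $E=(0,1)^d$: for $n\ne{\bf 0}$ the set $E+n$ is disjoint from $[0,1]^d\supseteq\mathrm{spt}(\nu)$, and we are done. Otherwise $\nu\big((0,1)^d\big)=0$. A direct computation shows $\phi_b^{-1}\big((0,1)^d\big)\cap[0,1]^d=[0,1]^d\sm\mathrm{Face}(b)$, where $\mathrm{Face}(b)$ is the union of the facets $\{y_i=0\}$ with $b_i=0$ and $\{y_i=1\}$ with $b_i=m_i-1$; hence $\big(\mu\circ\phi_b^{-1}\big)\big((0,1)^d\big)=\|\mu\|-\mu(\mathrm{Face}(b))$, and $\nu\big((0,1)^d\big)=0$ forces $\mu$ to be concentrated on $G:=\bigcap_{b\in B}\mathrm{Face}(b)$, a finite union of relatively open faces of $[0,1]^d$ which is closed under passing to subfaces. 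Pick a relatively open face $F^*\sse G$ of maximal dimension with $\mu(F^*)>0$; since $\mu\ne 0$ and $\mu\big((0,1)^d\big)=0$, this face is proper, so it lies in a hyperplane $x_i=\ep_i$ (with $\ep_i\in\{0,1\}$) for each $i$ in a nonempty set $I$, and is free in the remaining coordinates.

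\emph{Core construction.} I will then choose a digit $b^*\in B$ and set $E:=\phi_{b^*}(F^*)$, so that $\nu(E)\ge p_{b^*}\mu(F^*)>0$; note $E$ has the single value $\tfrac{\ep_i+b^*_i}{m_i}$ in each coordinate $i\in I$ and an interval inside $(0,1)$ in each coordinate outside $I$. One checks that for $n\ne{\bf 0}$ a set $\phi_b^{-1}(E+n)\cap[0,1]^d$ is either empty (which happens whenever $n_i\ne 0$ for some coordinate $i\notin I$, and whenever a coordinate is pushed out of $[0,1]$) or is again a relatively open face $F'$ of the same dimension as $F^*$, obtained from $F^*$ by ``flipping'' some of the boundary coordinates in $I$; and since $\mu$ lives on $G$, this face carries no mass unless $F'\sse G$. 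The role of $m_i\ge d+1$ — so that each coordinate offers at least $d+1>|I|$ digit values, hence at least $d-1$ ``interior'' ones — is to permit an extremal choice of $b^*$ (roughly: a digit whose coordinates indexed by $I$ sit as far as possible from the extreme values $0$ and $m_i-1$) for which \emph{every} such flipped face $F'$ fails to lie in $G$; then $\nu(E+n)=0$ for all $n\ne{\bf 0}$, as required. (An alternative, more inductive route is to reduce, in the boundary case, to $\mu$ carried by a single coordinate facet $\{y_{i_0}=c\}$, confining $\mathrm{spt}(\nu)$ to the values $\tfrac{c+b_{i_0}}{m_{i_0}}$ in coordinate $i_0$, so that all translations with $n_{i_0}\ne 0$ become harmless and one drops to a $(d-1)$-dimensional instance with $m_j\ge d\ge (d-1)+1$.)

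\emph{Main obstacle.} The substantive difficulty is exactly this combinatorial selection of $b^*$ (or, in the inductive version, the facet-concentration reduction): checking that $m_i\ge d+1$ supplies just enough ``room'' among the digit coordinates to defeat every boundary overlap at once. That some lower bound on the $m_i$ is genuinely needed is shown by $d=2$, $R=\mathrm{diag}(2,2)$, $B=\{(0,0),(1,1)\}$, and $\mu=a\,\delta_{(0,1)}+c\,\delta_{(1,0)}$ with $a,c>0$: here $\nu$ is supported on the four edge-midpoints of $[0,1]^2$, which pair up under the translations $\pm(1,0)$ and $\pm(0,1)$, so every set of positive $\nu$-measure meets an integer translate of $\mathrm{spt}(\nu)$ and no admissible $E$ exists.
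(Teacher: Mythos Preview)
Your reduction to finding $E$, the easy case $\nu\big((0,1)^d\big)>0$, and the deduction that in the hard case $\mu$ is concentrated on $G=\bigcap_{b\in B}\mathrm{Face}(b)$ are all correct. But the proposal stops exactly where the work begins: you never actually produce the digit $b^*$, and the hint (``coordinates far from $0$ and $m_i-1$'') does not resolve anything, because in the hard case \emph{every} $b\in B$ satisfies $\phi_b(F^*)\cap\partial[0,1]^d\ne\emptyset$ --- that is just a restatement of $F^*\sse\mathrm{Face}(b)$. So no $b^*$ pushes $E$ entirely into the interior, and deciding which flipped faces $F'$ land in $G$ is a genuine combinatorial statement about $B$ that you have not supplied. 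Your alternative inductive route is likewise only a gesture: $\mu$ need not be carried by a single facet, so the reduction to $d-1$ requires more than one sentence.

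The paper fills this gap with a different architecture. It inducts on $d$, but its Case~I is broader than your easy case: whenever $\mu$ gives positive mass to some slab $\{0<x_j<1\}$, one restricts $\mu$ to that slab and projects out coordinate $j$, landing in a bona fide $(d-1)$-dimensional instance of the theorem; this absorbs every situation with $\dim F^*\ge 1$. The residual Case~II is then just $\mathrm{spt}(\mu)\sse\{0,1\}^d$, and here the paper supplies the missing combinatorics explicitly: after normalising so that ${\bf 0}\in\mathrm{spt}(\mu)$ (whence $R^{-1}B\sse\mathrm{spt}(\nu)$), one starts from an arbitrary $b^{(1)}\in B$ and, whenever $R^{-1}b^{(k)}+n\in\mathrm{spt}(\nu)$ for some $n\ne{\bf 0}$, reads off from $\mathrm{spt}(\nu)=R^{-1}(B+\mathrm{spt}(\mu))$ a new $b^{(k+1)}\in B$ with one further coordinate equal to $m_i-1$ while each previously maximal coordinate drops by at most $1$. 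After at most $d$ such replacements every coordinate of the resulting $b$ is at least $m_i-d\ge 1$, so $R^{-1}b\in(0,1)^d$ and $E=\{R^{-1}b\}$ works. This iterative pigeonhole is precisely the ``extremal choice'' you were reaching for, but it becomes tractable only after the projection step has cut the problem down to the vertex-supported case.
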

\begin{remark}
  The condition that all diagonal elements of $R$ are $\ge d+1$ is necessary. Here we give an example in $\R^2$.
  Let \[
  R = \left(
          \begin{array}{cc}
            2 & 0 \\
            0 & 2 \\
          \end{array}
        \right),~
  B = \Bigg\{ \cv{1}{0}, \cv{0}{1} \Bigg\},~
  \mu = \frac{1}{2} \delta_{\{(0,0)\}} + \frac{1}{2} \delta_{\{(1,1)\}}.
  \]
  Set $\nu=(\delta_B *\mu)\circ R$. By simple calculation, we have
  \[ \nu = \frac{1}{4} \delta_{\{(0,1/2)\}} + \frac{1}{4} \delta_{\{(1,1/2)\}} + \frac{1}{4} \delta_{\{(1/2,0)\}} + \frac{1}{4} \delta_{\{(1/2,1)\}}.\]
  The conclusion in Theorem \ref{key-theorem} fails, and moreover, we have $(1/2,1/2) \in \mathcal{Z}(\nu)$.
\end{remark}

For later application, we note that
\begin{equation}\label{support-nu}
  \mathrm{spt}(\nu) = R^{-1} B + R^{-1} \mathrm{spt}(\mu) \sse [0,1]^d.
\end{equation}
We first show Theorem \ref{key-theorem} in $\R$.

\begin{lemma}\label{lemma-dimension-1}
  Theorem \ref{key-theorem} holds in $\R$.
\end{lemma}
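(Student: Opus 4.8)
The plan is to use that $\nu$ is concentrated on $[0,1]$, so that only the unit translates $n=\pm 1$ can cause trouble, and then to rule out the sole degenerate possibility, namely that $\nu$ is carried by the two endpoints $\{0,1\}$ with positive mass at each.

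Concretely, in $\R$ we have $R=m$ an integer with $m\ge 2$ and $B\sse\{0,1,\dots,m-1\}$. By \eqref{support-nu}, $\mathrm{spt}(\nu)=m^{-1}B+m^{-1}\mathrm{spt}(\mu)\sse[0,1]$, and the total mass is $\nu(\R)=\big(\sum_{b\in B}p_b\big)\mu(\R)>0$. Hence, for any Borel set $E\sse[0,1]$ and any integer $n$ with $|n|\ge 2$, the set $E+n$ is disjoint from $[0,1]\supseteq\mathrm{spt}(\nu)$, so $\nu(E+n)=0$ is automatic; it therefore suffices to exhibit a Borel set $E\sse[0,1]$ with $\nu(E)>0$ and $\nu(E+1)=\nu(E-1)=0$.

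I would then distinguish two cases according to whether $\nu\big((0,1)\big)>0$. If it is, set $E=(0,1)$: then $E+1=(1,2)$ and $E-1=(-1,0)$ are disjoint from $\mathrm{spt}(\nu)\sse[0,1]$, so $\nu(E\pm 1)=0$ and we are done. If instead $\nu\big((0,1)\big)=0$, then, since $\mathrm{spt}(\nu)\sse[0,1]$ and $\nu(\R)>0$, the measure $\nu$ is concentrated on $\{0,1\}$ with $\nu(\{0\})+\nu(\{1\})=\nu(\R)>0$. The key point is that at most one of $\nu(\{0\}),\nu(\{1\})$ can be positive: if both were positive we would have $\mathrm{spt}(\nu)=\{0,1\}$; but $0\in\mathrm{spt}(\nu)=m^{-1}B+m^{-1}\mathrm{spt}(\mu)$ forces (by nonnegativity of the summands) $0\in B$ and $0\in\mathrm{spt}(\mu)$, while $1\in\mathrm{spt}(\nu)$, together with $b\le m-1$ for $b\in B$ and $s\le 1$ for $s\in\mathrm{spt}(\mu)$, forces $m-1\in B$ and $1\in\mathrm{spt}(\mu)$; combining $0\in B$ with $1\in\mathrm{spt}(\mu)$ then yields $m^{-1}=m^{-1}\cdot 0+m^{-1}\cdot 1\in\mathrm{spt}(\nu)=\{0,1\}$, which is impossible for $m\ge 2$. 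Thus $\nu(\{0\})=0$ or $\nu(\{1\})=0$; in either case $\nu$ is a single atom, concentrated on $\{x_0\}$ with $x_0\in\{0,1\}$, and we take $E=\{x_0\}$, so that $\nu(E)=\nu(\R)>0$ and $\nu(E+n)=\nu(\{x_0+n\})=0$ for all $n\ne 0$. This completes the proof of the lemma.

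The only subtle step is the dichotomy in the second case, i.e.\ showing $\nu$ cannot equal $a\delta_0+b\delta_1$ with $a,b>0$; the resolution is the observation above that having the digit $0\in B$ and the endpoint $1\in\mathrm{spt}(\mu)$ simultaneously produces the intermediate atom-position $1/m\in\mathrm{spt}(\nu)$, which is incompatible with $\mathrm{spt}(\nu)\sse\{0,1\}$ as soon as $m\ge 2$ (equivalently $m\ge d+1$ with $d=1$). Everything else is elementary; in particular the admissibility of $(R,B)$ plays no role in this one-dimensional statement.
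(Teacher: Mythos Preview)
Your proof is correct and follows essentially the same approach as the paper: both split into the case where $\nu$ has mass in $(0,1)$ (then $E=(0,1)$ or $E=\mathrm{spt}(\nu)\cap(0,1)$ works), and the case where $\nu$ is carried by $\{0,1\}$, in which the point $1/m\in\mathrm{spt}(\nu)$ rules out both endpoints being in the support. The only cosmetic difference is that you phrase the dichotomy measure-theoretically ($\nu((0,1))>0$) while the paper phrases it via the support ($\mathrm{spt}(\nu)\cap(0,1)\ne\emptyset$); these are equivalent since $(0,1)$ is open.
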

\begin{proof}
  For $d = 1$, note that $\mathrm{spt}(\nu)\sse [0,1]$, and $R \ge 2$ is an integer.
  If $\mathrm{spt}(\nu) \cap (0,1) \ne \emptyset$, then the set $E = \mathrm{spt}(\nu) \cap (0,1)$ is desired.
  Otherwise, we have that $\mathrm{spt}(\nu) \sse \{0,1\}$.

  If both $0$ and $1$ are in $\mathrm{spt}(\nu)$, then by \eqref{support-nu} there exist $b_1,b_2 \in B$ such that
  \[ 0 \in R^{-1} b_1 + R^{-1} \mathrm{spt}(\mu) \text{ and } 1 \in R^{-1} b_2 + R^{-1} \mathrm{spt}(\mu). \]
  Since $\mathrm{spt}(\mu)\sse [0,1]$, it follows that \[ b_1 =0, \; b_2 = R-1,\; \{0,1\} \sse \mathrm{spt}(\mu). \]
  By \eqref{support-nu}, we obtain $1/R \in \mathrm{spt}(\nu). $
  Since $R \ge 2$, we have $0< 1/R < 1$. This yields a contradiction.
  Thus, we conclude that $\mathrm{spt}(\nu) = \{0\}$ or $\mathrm{spt}(\nu)= \{1\}$.
  Then the set $E =\mathrm{spt}(\nu)$ is desired.
\end{proof}

Next, we prove Theorem \ref{key-theorem} by induction on the dimension $d$.
\begin{proof}[Proof of Theorem \ref{key-theorem}]
  We assume that Theorem \ref{key-theorem} holds in $\R^{d-1}$ for some $d\ge 2$.
  In the following, we prove that Theorem \ref{key-theorem} also holds in $\R^{d}$.
  The proof is split into two cases.

  \textbf{Case I}: there exists $1\le j \le d$ such that $\mu\big( \{ (x_1,\ldots,x_d) \in [0,1]^d: 0< x_j < 1 \} \big) >0$.
  Without loss of generality, we assume that $\mu\big([0,1]^{d-1} \times (0,1)\big)>0$.

  Let $\mu_1$ and $\mu_2$ be the restrictions of $\mu$ on $[0,1]^{d-1} \times (0,1)$ and $[0,1]^{d-1} \times \{0,1\}$, respectively.
  Then $\mu = \mu_1 + \mu_2$.
  Set $\nu_1 = (\lambda*\mu_1) \circ R$ and $\nu_2 = (\lambda*\mu_2) \circ R$.
  Then $\nu = \nu_1 + \nu_2$.
  Note that $\nu_1$ is concentrated on $[0,1]^{d-1} \times \big( [0,1] \sm \{0, 1/m_d, 2/m_d, \ldots, 1 \} \big)$, $\nu_2$ is concentrated on $[0,1]^{d-1} \times \{0, 1/m_d, 2/m_d, \ldots, 1 \}$, and these two sets are disjoint.
  Thus, $\nu_1$ and $\nu_2$ are the restrictions of $\nu$ on $[0,1]^{d-1} \times \big( [0,1] \sm \{0, 1/m_d, 2/m_d, \ldots, 1 \} \big)$ and $[0,1]^{d-1} \times \{0, 1/m_d, 2/m_d, \ldots, 1 \}$, respectively.

  Define $\pi_{d,d-1}: \R^d \to \R^{d-1}$ by $(x_1,x_2,\ldots, x_d) \mapsto (x_1,x_2,\ldots, x_{d-1})$.
  Let $\mu' = \mu_1 \circ \pi_{d,d-1}^{-1}$, $R'=\mathrm{diag}(m_1, m_2, \ldots, m_{d-1})$ and $B' = \pi_{d,d-1}(B)$.
  For $b' \in B'$, let \[ p_{b'} = \sum_{b \in \pi_{d,d-1}^{-1}(b')} p_b. \]
  Then $(p_{b'})_{b' \in B'}$ is a positive weight vector associated with $B'$.
  Let \[ \lambda' = \sum_{b' \in B'} p_{b'} \delta_{b'},\;\nu' = (\lambda'*\mu') \circ R'. \]
  Since Theorem \ref{key-theorem} holds in $\R^{d-1}$, there exists a Borel subset $E'\sse\R^{d-1}$ such that $\nu'(E')>0$ and \[ \nu'(E'+n') =0  \] for all $n' \in \Z^{d-1} \setminus \{{\bf 0}\}$.

  It's straightforward to verify that \[ \nu' = \nu_1 \circ \pi_{d,d-1}^{-1} \]
  Construct a Borel subset of $\R^d$ by \[ E=E' \times \big( [0,1] \sm \{0, 1/m_d, 2/m_d, \ldots, 1 \} \big) .\]
  Note that $\nu_1$ is concentrated on $[0,1]^{d-1} \times \big( [0,1] \sm \{0, 1/m_d, 2/m_d, \ldots, 1 \} \big)$.
  Then
  \begin{align*}
    \nu'(E') & = \nu_1\big( \pi_{d,d-1}^{-1} (E') \big) = \nu_1(E' \times \R) \\
    & = \nu_1\Big( E' \times  \big( [0,1] \sm \{0, 1/m_d, 2/m_d, \ldots, 1 \} \big) \Big) \\
    & = \nu_1(E).
  \end{align*}
  Thus, $\nu_1(E) = \nu'(E') >0$, and it follows that $\nu(E) >0$.

  Take $n=(n_1,\ldots,n_d)\in \Z^{d} \setminus \{{\bf 0}\}$.
  If $n_d\ne 0$, then $(E+n) \cap [0,1]^{d} =\emptyset$, and hence $\nu(E+n) =0$.
  If $n_d=0$, then $n'=(n_1,\ldots,n_{d-1})\in \Z^{d-1} \setminus \{{\bf 0}\}$, and we have
  \begin{align*}
    \nu_1(E+n) & = \nu_1\Big( (E'+n') \times \big( [0,1] \sm \{0, 1/m_d, 2/m_d, \ldots, 1 \} \big) \Big) \\
    & = \nu_1\big( (E'+n') \times \R \big) \\
    & = \nu_1\big( \pi^{-1}_{d,d-1}(E'+n') \big) \\
    & = \nu'(E'+n') =0.
  \end{align*}
  Since $\nu_2$ is concentrated on $[0,1]^{d-1} \times \{0, 1/m_d, 2/m_d, \ldots, 1 \}$, we have $\nu_2(E+n)=0$.
  Thus, we conclude that $\nu(E+n)=0$.
  Therefore, the set $E$ is desired.

  \textbf{Case II}: for each $1\le j \le d$, we have $\mu\big( \{ (x_1,\ldots,x_d) \in [0,1]^d: 0< x_j < 1 \} \big) =0$.
  Note that $\mathrm{spt}(\mu) \sse [0,1]^d$.
  Thus, we conclude that
  \begin{equation}\label{support-mu}
    \mathrm{spt}(\mu) \sse \{(x_1,x_2,\ldots,x_d) \in [0,1]^d: x_j \in \{0,1\}\}.
  \end{equation}

  If $\bf{0} \not\in \mathrm{spt}(\mu)$, then arbitrarily take $\tau=(\tau_1,\tau_2,\ldots, \tau_d) \in \mathrm{spt}(\mu)$.
  We have $\tau_j =0$ or $\tau_j =1$ for $1\le j \le d$.
  We define $\phi: \R^d \to \R^d$ by $(x_1, x_2,\ldots, x_d) \mapsto (y_1,y_2,\ldots,y_d)$ where $y_j = 1- x_j$ if $\tau_j =1$ and $y_j = x_j$ if $\tau_j =0$.
  Note that $\phi^{-1} = \phi$ and $\phi(\tau) = \bf{0}$.
  Set $\nu' = \nu \circ \phi$.
  Then we have \[ \nu' = (\lambda* \mu) \circ R \circ \phi= \big( (\lambda \circ \psi) * (\mu \circ \phi) \big) \circ R, \]
  where $\psi: \R^d \to \R^d$ is defined by $(x_1, x_2,\ldots, x_d) \mapsto (y_1,y_2,\ldots,y_d)$ where $y_j = m_j -1 - x_j$ if $\tau_j =1$ and $y_j = x_j$ if $\tau_j =0$.
  Write $\lambda' = \lambda \circ \psi$ and $\mu' = \mu \circ \phi$.
  It follows that \[ \nu' = (\lambda' * \mu') \circ R. \]
  Set $B' = \psi(B)$ and $p_{b'} = p_{\psi(b')}$ for $b' \in B'$.
  Then we have $\lambda' = \sum_{b' \in B'} p_{b'} \delta_{b'}$.
  Note that $(R,B') \in \mcal{D}_d$ and $\mathrm{spt}(\mu') = \phi\big(\mathrm{spt}(\mu)\big) \sse [0,1]^d$.

  Assume that there exists a Borel subset $E' \sse \R^d$ such that $\nu'(E')>0$ and $\nu'(E'+n') =0$ for all $n' \in \Z^{d} \setminus \{{\bf 0}\}$.
  Let $E=\phi(E')$. Then $\nu(E) = \nu\big( \phi(E')\big) = \nu'(E') >0$.
  For $n =(n_1, n_2, \ldots, n_d)\in \Z^{d} \setminus \{{\bf 0}\}$, let $n'=(n'_1, n'_2, \ldots , n'_d)$ where $n'_j = - n_j$ if $\tau_j =1$ and $n'_j = n_j$ if $\tau_j =0$. Then we have
  \[ \nu(E + n) = \nu\big( \phi(E') + n\big) =\nu\big( \phi(E' +n')\big) = \nu'(E'+n') =0. \]
  The set $E$ is desired.
  Thus, it suffices to show the conclusion holds for $\nu'$.
  Note that $\bf{0} =\phi(\tau) \in \phi\big( \mathrm{spt}(\mu) \big) = \mathrm{spt}(\mu')$.
  Therefore, without loss of generality, we assume that ${\bf 0}\in \mathrm{spt}(\mu)$.
  Thus, by (\ref{support-nu}) we have \[ R^{-1} B \sse \mathrm{spt}(\nu). \]

  In the following, we show that there exists $b\in  B$ such that
  \begin{equation}\label{li1}
    R^{-1}b+n\notin \mathrm{spt}(\nu)\;\; \text{for any}\;\; n \in \Z^{d} \setminus \{{\bf 0}\}.
  \end{equation}
  Then the set $E=\{R^{-1}b\}$ is desired.

  Now arbitrarily take $b^{(1)}=(b^{(1)}_1, b^{(1)}_2, \ldots , b^{(1)}_d)\in   B$. If (\ref{li1}) holds, then we are done. Otherwise, there exists $n^{(1)}=(n^{(1)}_1, n^{(1)}_2, \ldots , n^{(1)}_d) \in \Z^{d} \setminus \{ {\bf 0} \}$ such that
  \[ R^{-1}b^{(1)} + n^{(1)}\in \mathrm{spt}(\nu). \]
  Note that $\mathrm{spt}(\nu) \sse [0,1]^d$. Thus, there exists $1\leq i_1\leq d$ such that
  \begin{equation}\label{li2}
  n^{(1)}_{i_1}=1\;\;\text{and}\;\; b^{(1)}_{i_i}=0.
  \end{equation}
  On the other hand, by \eqref{support-nu}, we can find $b^{(2)}=(b^{(2)}_1, b^{(2)}_2, \ldots , b^{(2)}_d)\in B$ and $y^{(1)} \in \mathrm{spt}(\mu)$ such that
  \begin{equation}\label{li3}
    R^{-1}b^{(1)} + n^{(1)} = R^{-1} b^{(2)} + R^{-1} y^{(1)}.
  \end{equation}
  Combining (\ref{li2}) and (\ref{li3}), one has $b^{(2)}_{i_1}=m_{i_1}-1$.

  If $b^{(2)}$ satisfies (\ref{li1}), then we are done. Otherwise, by \eqref{support-nu}, one can find $n^{(2)}=(n^{(2)}_1, n^{(2)}_2, \ldots , n^{(2)}_d) \in \Z^{d} \setminus \{ {\bf 0} \}$, $b^{(3)}=(b^{(3)}_1, b^{(3)}_2, \ldots , b^{(3)}_d)\in B$ and $y^{(2)} \in \mathrm{spt}(\mu)$ such that
  \begin{equation}\label{li4}
    R^{-1}b^{(2)} + n^{(2)} = R^{-1} b^{(3)} + R^{-1} y^{(2)}.
  \end{equation}
  By the above argument, there exists $i_2\in \{1,2,\ldots, d\}\setminus \{i_1\}$
  such that $n^{(2)}_{i_2}=1$ and $b^{(2)}_{i_2}=0$.
  Note that $b^{(2)}_{i_1}=m_{i_1}-1$, $b^{(2)}_{i_2}=0$ and $n^{(2)}_{i_2}=1$ . Thus (\ref{li4}) implies the following facts
  $$
  b^{(3)}_{i_1}\geq m_{i_1}-2,\; b^{(3)}_{i_2}=m_{i_2}-1.
  $$

  If $b^{(3)}$ satisfies (\ref{li1}), then we are done. Otherwise, one can find $b^{(4)}=(b^{(4)}_1, b^{(4)}_2, \ldots , b^{(4)}_d)\in B$ satisfying
  $$
  b^{(4)}_{i_1}\geq m_{i_1}-3,\; b^{(4)}_{i_2}\geq m_{i_2}-2,\;
  b^{(4)}_{i_3}=m_{i_3}-1\;\;\text{where}\;\; i_3\in \{1,2,\ldots, d\}\setminus \{i_1,i_2\}.
  $$
  We can continue the above process at most $d$ steps to obtain $b \in B$ which satisfies (\ref{li1}).
  This is because all $m_i\geq d+1$ and so at the $d$-th step, one can find $b^{(d+1)}=(b^{(d+1)}_1, b^{(d+1)}_2, \ldots , b^{(d+1)}_d)\in B$ satisfying $b^{(d+1)}_i\geq 1$ for all $1\leq i\leq d$. This $b^{(d+1)}$ clearly satisfies (\ref{li1}).

  Now, by assuming that Theorem \ref{key-theorem} holds in $\R^{d-1}$, we have showed that Theorem \ref{key-theorem} holds in $\R^d$.
  By Lemma \ref{lemma-dimension-1}, Theorem \ref{key-theorem} holds in $\R$.
  By induction on the dimension $d$, we complete the proof.
\end{proof}

We end this section by giving a corollary of Theorem \ref{key-theorem}.

\begin{corollary}\label{coro-infinite-convolution}
  In $\R^d$, given a sequence $\{(R_n, B_n)\}_{n=1}^\f \subseteq \mcal{D}_d$, for the infinite convolution
  \[ \nu = \delta_{R_1^{-1} B_1} * \delta_{(R_2 R_1)^{-1} B_2} * \cdots * \delta_{(R_n \cdots R_2 R_1)^{-1} B_n} * \cdots, \]
  we have $\mcal{Z}(\nu) = \emptyset$.
\end{corollary}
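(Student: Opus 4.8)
The plan is to reduce Corollary \ref{coro-infinite-convolution} to Theorem \ref{key-theorem} followed by Theorem \ref{theorem-emptyset}, after grouping enough of the initial factors of $\nu$ together so that the leading expansion matrix has all diagonal entries at least $d+1$, which is precisely the size hypothesis demanded by Theorem \ref{key-theorem}.

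First I would write $R_n = \mathrm{diag}(m_{n,1}, \ldots, m_{n,d})$ and fix an integer $k \ge 1$ with $2^k \ge d+1$. Splitting off the first $k$ factors,
\[
\nu = \Big( \delta_{R_1^{-1} B_1} * \cdots * \delta_{(R_k \cdots R_1)^{-1} B_k} \Big) * \Big( \delta_{(R_{k+1} \cdots R_1)^{-1} B_{k+1}} * \delta_{(R_{k+2} \cdots R_1)^{-1} B_{k+2}} * \cdots \Big),
\]
and setting $\mathbf{R} = R_k R_{k-1} \cdots R_1$, I would use that the diagonal matrices $R_n$ commute to rewrite each factor: for $1 \le j \le k$, $\delta_{(R_j \cdots R_1)^{-1} B_j}$ is the push-forward of $\delta_{(R_k \cdots R_{j+1}) B_j}$ under $x \mapsto \mathbf{R}^{-1} x$, and for $j \ge 1$, $\delta_{(R_{k+j} \cdots R_1)^{-1} B_{k+j}}$ is the push-forward of $\delta_{(R_{k+j} \cdots R_{k+1})^{-1} B_{k+j}}$ under the same map. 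Since push-forward by a linear map commutes with convolution, this gives $\nu = (\lambda * \mu) \circ \mathbf{R}$, where $\mu = \delta_{R_{k+1}^{-1} B_{k+1}} * \delta_{(R_{k+2} R_{k+1})^{-1} B_{k+2}} * \cdots$ is again an infinite convolution generated by a sequence in $\mcal{D}_d$, and $\lambda = \delta_{(R_k \cdots R_2) B_1} * \cdots * \delta_{R_k B_{k-1}} * \delta_{B_k}$ is a finite discrete measure whose set of atoms is $\mathbf{B} := (R_k \cdots R_2) B_1 + \cdots + R_k B_{k-1} + B_k$, each carrying positive mass.

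Next I would verify the hypotheses of Theorem \ref{key-theorem} for $(\mathbf{R}, \mathbf{B})$, $\lambda$, and $\mu$. Reading each coordinate as a mixed-radix expansion in the bases $m_{1,i}, \ldots, m_{k,i}$ shows $\mathbf{B} \subseteq \{0, \ldots, M_1 - 1\} \times \cdots \times \{0, \ldots, M_d - 1\}$, where $\mathbf{R} = \mathrm{diag}(M_1, \ldots, M_d)$ with $M_i = \prod_{n=1}^k m_{n,i}$; hence $(\mathbf{R}, \mathbf{B}) \in \mcal{D}_d$, and moreover $M_i \ge 2^k \ge d+1$ for every $i$. The measure $\mu$ is a Borel probability measure with $\mathrm{spt}(\mu) \subseteq [0,1]^d$, since each of its partial convolutions is supported in $[0,1]^d$ by the same elementary computation as in \eqref{support-nu} and $[0,1]^d$ is closed. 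Thus Theorem \ref{key-theorem} applies and yields a Borel set $E$ with $\nu(E) > 0$ and $\nu(E + n) = 0$ for all $n \in \Z^d \setminus \{\mathbf{0}\}$, and then Theorem \ref{theorem-emptyset} gives $\mcal{Z}(\nu) = \emptyset$, as desired. I do not expect any genuine obstacle here: the substantive work is entirely inside Theorem \ref{key-theorem}, and the only places calling for care are checking that the grouped pair $(\mathbf{R}, \mathbf{B})$ still belongs to $\mcal{D}_d$ (the mixed-radix containment) and keeping the push-forward and convolution identities straight when passing to $\nu = (\lambda * \mu) \circ \mathbf{R}$, which hinges on the $R_n$ being diagonal.
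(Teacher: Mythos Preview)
Your proposal is correct and follows essentially the same route as the paper's proof: group the first several factors so that the leading diagonal matrix has all entries $\ge d+1$, write $\nu = (\lambda * \mu) \circ \mathbf{R}$ with $\mathrm{spt}(\mu) \subseteq [0,1]^d$, then invoke Theorem~\ref{key-theorem} followed by Theorem~\ref{theorem-emptyset}. The paper simply takes $k = d$ (since $2^d \ge d+1$ for all $d \ge 1$) rather than the minimal $k$ with $2^k \ge d+1$, and one small remark: the identity $\nu = (\lambda * \mu) \circ \mathbf{R}$ does not actually require the $R_n$ to commute---it follows from $(R_{k+j}\cdots R_1)^{-1} = \mathbf{R}^{-1}(R_{k+j}\cdots R_{k+1})^{-1}$ and linearity of the push-forward---so the diagonal hypothesis is used only to ensure $(\mathbf{R},\mathbf{B}) \in \mcal{D}_d$.
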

\begin{proof}
  Let $R=R_d \cdots R_2 R_1$, and \[ B = R_d R_{d-1} \cdots R_2 B_1 + R_d R_{d-1}\cdots R_3 B_2 + \cdots + R_d B_{d-1} + B_d.\]
  Then $(R,B) \in \mcal{D}_d$, and all diagonal elements of $R$ are $\ge 2^d \ge d+1$.
  The measure $\nu$ can be written as \[ \nu= (\delta_B * \mu)\circ R, \]
  where $\mu = \delta_{R_{d+1}^{-1} B_{d+1}} * \delta_{(R_{d+2} R_{d+1})^{-1} B_{d+2}} * \cdots$.
  Note that $\mathrm{spt}(\mu) \sse [0,1]^d$.
  It follows from Theorem \ref{key-theorem} and Theorem \ref{theorem-emptyset} that $\mcal{Z}(\nu) = \emptyset$.
\end{proof}

\section{Proofs of Theorem \ref{theorem-general-spectrality}, \ref{theorem-Rd-1} and \ref{theorem-Rd-2}} \label{sec_Rd}

First, we recall a criterion for the convergence of infinite convolutions, and then we prove Theorem \ref{theorem-general-spectrality}.

\begin{theorem}\label{theorem-weak-convergence}{\rm\cite[Theorem 1]{Jessen-Wintner-1935}}
  Let $\mu_1,\mu_2, \cdots \in \mcal{P}(\R^d)$.
  A necessary and sufficient condition for the convergence of the infinite convolution $\mu_1* \mu_2* \cdots$ is that for any arbitrarily chosen sequence $\{k_n\}$ of positive integers, $\{\rho_n\}$ converges weakly to $\delta_{{\bf 0}}$ as $n \to \f$, where $\rho_n = \mu_{n+1} * \cdots * \mu_{n+k_n}$.
\end{theorem}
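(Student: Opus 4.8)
Write $\nu_n=\mu_1*\mu_2*\cdots*\mu_n$ for the partial convolutions and, for $m>n\ge 0$, set $\rho_{n,m}=\mu_{n+1}*\cdots*\mu_m$, so that $\nu_m=\nu_n*\rho_{n,m}$ and, on the Fourier side, $\widehat{\nu_m}(\xi)=\widehat{\nu_n}(\xi)\widehat{\rho_{n,m}}(\xi)$. I would realize these measures probabilistically: pick independent random vectors $X_1,X_2,\dots$ with $X_j$ distributed according to $\mu_j$, put $S_n=X_1+\cdots+X_n$, so that $S_n$ has law $\nu_n$ and $S_m-S_n$ has law $\rho_{n,m}$ for $m>n$. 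Convergence of the infinite convolution means exactly that $S_n$ converges in distribution.

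The approach is to recognize the stated condition as the assertion that $(S_n)_{n\ge 1}$ is a Cauchy sequence in probability. First I would observe that weak convergence of $\rho_n$ to $\delta_{{\bf 0}}$ is the same as convergence of $S_{n+k_n}-S_n$ to $0$ in probability; hence, by a routine subsequence (diagonal) argument, requiring $\rho_n\to\delta_{{\bf 0}}$ for \emph{every} sequence $\{k_n\}$ of positive integers is equivalent to
\[
\text{for every } \ep>0,\qquad \sup_{k\ge 1}\PP\big(|S_{n+k}-S_n|>\ep\big)\longrightarrow 0 \quad\text{as } n\to\infty,
\]
and, using $|S_m-S_{m'}|=|S_{m'}-S_m|$, this is precisely the Cauchy-in-probability condition for $(S_n)$.

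For sufficiency I would then use that a Cauchy-in-probability sequence of $\R^d$-valued random variables converges in probability: choose $n_1<n_2<\cdots$ with $\PP(|S_{n_{i+1}}-S_{n_i}|>2^{-i})<2^{-i}$, apply the Borel--Cantelli lemma to get that $(S_{n_i})$ converges almost surely to some $S$, and conclude from the Cauchy property that $S_n\to S$ in probability; then $\nu_n$ converges weakly to the law of $S$, which is the desired convergence of the infinite convolution. For necessity, assuming $S_n$ converges in distribution, I would invoke L\'evy's theorem on series of independent random variables: for partial sums of independent summands, convergence in distribution already forces almost sure convergence. Thus $(S_n)$ is a.s.\ Cauchy, \emph{a fortiori} Cauchy in probability, and the reformulation of the previous paragraph gives $\rho_n\to\delta_{{\bf 0}}$ for every choice of $\{k_n\}$.

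The main obstacle is the necessity direction: the implication ``$S_n$ converges in distribution $\Rightarrow$ $(S_n)$ is Cauchy in probability'' is genuinely non-formal (it fails for general dependent sequences) and rests on a maximal inequality of L\'evy--Ottaviani type, i.e.\ on the independence of the increments. If one wishes to stay inside the Fourier-analytic framework of this paper (Theorem~\ref{thm_wkcvg}), the same difficulty resurfaces as the task of passing from pointwise convergence of the partial products $\widehat{\nu_n}(\xi)=\prod_{j\le n}\widehat{\mu_j}(\xi)$ --- which is immediate from the Cauchy estimate $|\widehat{\nu_m}(\xi)-\widehat{\nu_n}(\xi)|\le\sup_{k\ge 1}|\widehat{\rho_{n,n+k}}(\xi)-1|$ --- to \emph{tightness} of $\{\nu_n\}$; this last step one would handle with the standard truncation inequality $\rho(\{|x|\ge 1/u\})\le C_d\,u^{-d}\int_{[-u,u]^d}\big(1-\mathrm{Re}\,\widehat\rho(\xi)\big)\,\mathrm{d}\xi$ and the elementary fact that a probability measure whose Fourier transform equals $1$ on a neighbourhood of the origin must be $\delta_{{\bf 0}}$.
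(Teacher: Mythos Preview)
The paper does not prove this theorem at all: it is simply quoted from Jessen and Wintner \cite{Jessen-Wintner-1935} with a citation and then used as a black box in the proof of Theorem~\ref{theorem-general-spectrality-2}. So there is no ``paper's own proof'' to compare your sketch against.

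Your probabilistic outline is a reasonable route to the result and is essentially the classical one. The equivalence you set up between the $\rho_n\to\delta_{\bf 0}$ condition and the Cauchy-in-probability condition for $(S_n)$ is correct, and the sufficiency direction via Borel--Cantelli is standard. For necessity you correctly flag the only genuinely nontrivial input, namely a L\'evy--Ottaviani type maximal inequality (or equivalently L\'evy's equivalence theorem for sums of independent summands) to pass from convergence in distribution to the Cauchy property; this is the right place to put the weight, and your alternative Fourier-analytic remark about tightness via the truncation inequality is also a viable way to close that gap. None of this appears in the present paper, which simply imports the theorem from the literature.
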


\begin{theorem}\label{theorem-general-spectrality-2}
  Let $\{(R_n,B_n)\}_{n=1}^\infty$ be a sequence of admissible pairs in $\R^d$.
  Suppose that

  {\rm(i)} \[\lim_{n \to \f} \|R_1^{-1} R_2^{-1} \cdots R_n^{-1}\| = 0,\]

  \noindent
  and there exists a cube $C= t_0 + [0,1]^d$ for some $t_0 \in \R^d$ such that

  {\rm(ii)} for each $n \ge 1$, we have $R_n^{-1}(C + b) \subseteq C$ for every $b \in B_n$,

  {\rm(iii)} there exists an admissible pair $(R,B)$, which occurs infinitely times in the sequence $\{(R_n,B_n)\}_{n=1}^\f$, such that $R^{-1}(C + b_0) \subseteq \mathrm{int}(C)$ for some $b_0 \in B$.

  \noindent
  Then the infinite convolution $\mu$ defined in \eqref{infinite-convolution} exists and is a spectral measure with a spectrum in $\Z^d$.
\end{theorem}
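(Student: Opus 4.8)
The plan is to first establish the existence of $\mu$, then extract from the tails $\{\nu_{>n}\}$ a weakly convergent subsequence whose limit has empty integral periodic zero set, and finally invoke Theorem~\ref{theorem-spectrality-emptyset}.

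\emph{Step 1 (a priori bounds and existence).} I would first exploit the fact that (ii) is a strong geometric constraint: since $R_n^{-1}$ maps the unit cube $C+b$ into the unit cube $C$, the image $R_n^{-1}[0,1]^d$ lies in a translate of $[0,1]^d$, so projecting onto each coordinate axis shows every row of $R_n^{-1}$ has $\ell^1$-norm at most $1$; hence $\|R_n^{-1}\|_\infty\le 1$ for all $n$, and the same bound holds for every product $R_{n+1}^{-1}\cdots R_{n+k}^{-1}$. Writing $f_{n,b}(x)=R_n^{-1}(x+b)$, (ii) says $f_{n,b}(C)\subseteq C$, so every finite composition $g=f_{n+1,b_1}\circ\cdots\circ f_{n+k,b_k}$ maps $C$ into $C$; since its linear part $R_{n+1}^{-1}\cdots R_{n+k}^{-1}$ has sup-norm $\le 1$, the image $g(\mathbf 0)$ lies within a fixed bounded distance of $C$ (compare $g(\mathbf 0)$ with $g(t_0)\in C$). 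As $\mathrm{spt}\big(\delta_{R_{n+1}^{-1}B_{n+1}}*\cdots*\delta_{(R_{n+k}\cdots R_{n+1})^{-1}B_{n+k}}\big)$ is exactly the set of such points, all these supports lie in one fixed compact set $C'$; the family is therefore tight, and letting $k\to\f$ gives $\mathrm{spt}(\nu_{>n})\subseteq C$ for every $n$. For existence I would apply the Jessen--Wintner criterion (Theorem~\ref{theorem-weak-convergence}): for any sequence $\{k_N\}$, the support of $\rho_N=\delta_{(R_{N+1}\cdots R_1)^{-1}B_{N+1}}*\cdots*\delta_{(R_{N+k_N}\cdots R_1)^{-1}B_{N+k_N}}$ equals $(R_N\cdots R_1)^{-1}$ applied to one of the bounded sets above, and since $\|(R_N\cdots R_1)^{-1}\|=\|R_1^{-1}\cdots R_N^{-1}\|\to 0$ by (i), $\rho_N\to\delta_{\mathbf 0}$ weakly; hence $\mu$ exists.

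\emph{Step 2 (a weak limit of the tails).} Let $(R,B)$ be the pair from (iii) and let $n_1<n_2<\cdots$ be indices with $(R_{n_j+1},B_{n_j+1})=(R,B)$. From \eqref{ms_nugen} one checks directly that $\nu_{>n_j}=\delta_{R^{-1}B}*\big(\nu_{>n_j+1}\circ R^{-1}\big)$. By the tightness obtained in Step~1 I would pass to a subsequence along which both $\nu_{>n_j}\to\nu$ and $\nu_{>n_j+1}\to\nu^*$ weakly; both limits are probability measures supported in $C$. Since convolution with the fixed measure $\delta_{R^{-1}B}$ and pushforward by the linear homeomorphism $x\mapsto R^{-1}x$ are weakly continuous, passing to the limit yields $\nu=\delta_{R^{-1}B}*\big(\nu^*\circ R^{-1}\big)$.

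\emph{Step 3 (empty periodic zero set and conclusion).} Because $\nu$ dominates the summand coming from $b_0$, we have $\mathrm{spt}(\nu)\supseteq R^{-1}\big(b_0+\mathrm{spt}(\nu^*)\big)$, which is nonempty and, by (iii), contained in $R^{-1}(C+b_0)\subseteq\mathrm{int}(C)$. Pick $x_0$ in this set and a small open ball $E\ni x_0$ with $E\subseteq\mathrm{int}(C)$; then $\nu(E)>0$, while for every $k\in\Z^d\setminus\{\mathbf 0\}$ the set $E+k\subseteq\mathrm{int}(C)+k$ is disjoint from $C\supseteq\mathrm{spt}(\nu)$, so $\nu(E+k)=0$. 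Theorem~\ref{theorem-emptyset} then gives $\mcal{Z}(\nu)=\emptyset$. Since (i) holds and $\nu_{>n_j}$ converges weakly to $\nu$ along our subsequence, Theorem~\ref{theorem-spectrality-emptyset} shows that $\mu$ is a spectral measure with a spectrum in $\Z^d$. The fiddly but routine part is Step~1 — squeezing from (ii) uniform support bounds strong enough to run the Jessen--Wintner criterion and to force $\mathrm{spt}(\nu_{>n})\subseteq C$; the conceptual core is Step~2--3, namely identifying the limit as $\nu=\delta_{R^{-1}B}*(\nu^*\circ R^{-1})$ and using (iii) to deposit a chunk of $\mathrm{spt}(\nu)$ inside $\mathrm{int}(C)$.
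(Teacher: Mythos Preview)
Your approach is correct and essentially identical to the paper's: both use (ii) together with the Jessen--Wintner criterion to get existence and the support bound $\mathrm{spt}(\nu_{>n})\subseteq C$, extract a weakly convergent subsequence of tails along the indices where $(R,B)$ appears, identify the limit via the recursion with $\delta_{R^{-1}B}$, and then invoke Theorems~\ref{theorem-emptyset} and~\ref{theorem-spectrality-emptyset}. One cosmetic slip: in the paper's convention $(\mu\circ f)(A)=\mu(f(A))$ (so that $\nu_{>n}=\mu_{>n}\circ(R_n\cdots R_1)^{-1}$ is the pushforward under $R_n\cdots R_1$), your recursion should read $\nu_{>n_j}=\delta_{R^{-1}B}*(\nu_{>n_j+1}\circ R)$ rather than $\circ R^{-1}$, though your subsequent support computation in Step~3 is consistent with the correct formula.
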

\begin{proof}
  Let $\{\mu_n\}$ be defined in \eqref{ms_mun}. We have
  \[ \mathrm{spt}(\mu_n) = R_1^{-1} B_1 + (R_2 R_1)^{-1} B_2 + \cdots + (R_n \cdots R_2 R_1)^{-1} B_n. \]
  By the assumption (ii), we have $R_n^{-1}(B_n+C) \sse C$ for all $n \ge 1$.
  Thus,
  \begin{align*}
    &~R_1^{-1} B_1 + (R_2 R_1)^{-1} B_2 + \cdots + (R_n \cdots R_2 R_1)^{-1} B_n + (R_n \cdots R_2 R_1)^{-1}C \\
    = & ~ R_1^{-1} B_1 + (R_2 R_1)^{-1} B_2 + \cdots + (R_{n-1} \cdots R_2 R_1)^{-1} \Big( R_n^{-1}(B_n +C) \Big) \\
    \subseteq & ~ R_1^{-1} B_1 + (R_2 R_1)^{-1} B_2 + \cdots + (R_{n-1} \cdots R_2 R_1)^{-1} B_{n-1} +  (R_{n-1} \cdots R_2 R_1)^{-1}  C \\
    \subseteq & ~ \cdots\cdots \\
    \subseteq & ~ R_1^{-1} (B_1 + C) \\
    \subseteq & ~ C.
  \end{align*}
  It follows that
  \begin{equation}\label{support-mun}
    \mathrm{spt}(\mu_n) \subseteq C - (R_n \cdots R_2 R_1)^{-1} C.
  \end{equation}

  Choose a sequence $\{k_n\}_{n=1}^\f$ of positive integers in an arbitrary way.
  For each $n \ge 1$, define \[ \rho_n = \delta_{(R_{n+1} \cdots R_2 R_1)^{-1} B_{n+1}} * \cdots * \delta_{(R_{n+k_n} \cdots R_2 R_1)^{-1} B_{n+k_n}}. \]
  Then $\rho_n = \lambda_n \circ (R_n \cdots R_2 R_1)$ where $\lambda_n = \delta_{R_{n+1}^{-1} B_{n+1}} * \cdots * \delta_{(R_{n+k_n} \cdots R_{n+2} R_{n+1})^{-1} B_{n+k_n}}$.
  By the same argument for $\mu_n$, we have $\mathrm{spt}(\lambda_n) \subseteq C - (R_{n+k_n} \cdots R_{n+2} R_{n+1})^{-1}C$.
  Thus, \[ \mathrm{spt}(\rho_n) = (R_n \cdots R_2 R_1)^{-1} \mathrm{spt}(\lambda_n) \subseteq (R_n \cdots R_2 R_1)^{-1} C - (R_{n+k_n} \cdots R_{2} R_{1})^{-1}C. \]
  By the assumption (i), the sequence of sets $\{\mathrm{spt}(\rho_n)\}$ shrinks to the origin point ${\bf 0}$.
  Therefore, we have $\{\rho_n\}$ converges weakly to $\delta_{{\bf 0}}$.
  Since the sequence $\{k_n\}_{n=1}^\f$ is arbitrarily chosen, by Theorem \ref{theorem-weak-convergence}, we conclude that the infinite convolution $\mu$ exists.

  For $\gamma >0$, define $C_\gamma= C + [-\gamma, \gamma]^d$. It follows from \eqref{support-mun} and the assumption (i) that for any given $\gamma >0$, $\mathrm{spt}(\mu_n) \subseteq C_\gamma$ for sufficiently large $n$. Note that $\{\mu_n\}$ converges weakly to $\mu$. Thus, we have $\mathrm{spt}(\mu) \subseteq C_\gamma$ for all $\gamma > 0$. It follows that $\mathrm{spt}(\mu) \subseteq C$.
  Let $\{\nu_{>n}\}$ be defined in \eqref{ms_nugen}.
  Similarly, we also have $\mathrm{spt}(\nu_{>n}) \subseteq C$ for all $n \ge 1$.
  Since $(R,B)$ occurs infinitely times in the sequence $\{(R_n,B_n)\}_{n=1}^\f$, let
  \[ \{n_1 , n_2 , n_3, \ldots \} = \{ n \ge 1: (R_n, B_n) = (R,B) \}, \] where $n_1 < n_2 < n_3 <\cdots$.
  By the weak compactness of $\{\nu_{>n}\}$, the sequence $\{\nu_{>n_k}\}$ has a weak convergent subsequence.
  By taking a subsequence, we may assume that $\{\nu_{>n_k}\}$ converges weakly to a Borel probability measure $\rho$. Moreover, we have $\mathrm{spt}(\rho) \sse C$.
  Note that $\nu_{> n_k -1} = \delta_{R_{n_k}^{-1} B_{n_k}} * (\nu_{>n_k} \circ R_{n_k}) = \delta_{R^{-1} B} * (\nu_{>n_k} \circ R)$.
  Thus, $\{\nu_{>n_k-1}\}$ converges weakly to $\nu=\delta_{R^{-1} B} * (\rho \circ R)$.
  It follows that \[\mathrm{spt}(\nu) = R^{-1}\big( \mathrm{spt}(\rho) + B \big). \]
  By the assumption (ii), we have $\mathrm{spt}(\nu) \sse C$.
  By the assumption (iii), we have $E=\mathrm{spt}(\nu) \cap \mathrm{int}(C) \ne \emptyset$, and hence $\nu(E)>0$.
  For any $n \in \Z^d\sm \{{\bf 0}\}$, noting that $E+n \subset \mathrm{int}(C) + n$, we have $(E+n) \cap C = \emptyset$. It follows that $\nu(E+n) = 0$ for any $n \in \Z^d\sm \{{\bf 0}\}$.
  By Theorem \ref{theorem-emptyset}, we have $\mcal{Z}(\nu)=\emptyset$.
  It follows from Theorem \ref{theorem-spectrality-emptyset} that $\mu$ is a spectral measure with a spectrum in $\Z^d$.
\end{proof}
\begin{remark}
  (a) By Lemma \ref{lemma-HT} (i) and Lemma \ref{lemma-affine-invariant}, the translation of the digit sets $B_n$ does not change the admissible pair assumption and the spectrality of the resulting infinite convolution.
  Thus, in practical applications we can translate the digit sets $B_n$ to check whether the assumptions (ii) and (iii) are satisfied or not.

  (b) If there are only finitely many terms in the sequence $\{(R_n,B_n)\}_{n=1}^\infty$ not satisfying the assumption (ii), then we can choose a sufficiently large integer $n_0$ such that the admissible pair $(R_n,B_n)$ satisfies the condition (ii) for all $n \ge n_0+1$.
  Applying this theorem to the sequence $\{ (R_n,B_n) \}_{n=n_0+1}^\f$, we obtain that $\nu_{>n_0}$ defined in \eqref{ms_nugen} is a spectral measure with a spectrum in $\Z^d$.
  Note that $\mu_{>n_0} = \nu_{>n_0} \circ (R_{n_0} \cdots R_2 R_1)$.
  By Lemma \ref{lemma-affine-invariant}, $\mu_{>n_0}$ admits a spectrum in $(R_{n_0} \cdots R_2 R_1)^{\mathrm{T}}\Z^d$.
  Finally, by Lemma \ref{lemma-spectrality-convolution}, we also conclude the infinite convolution $\mu = \mu_{n_0} * \mu_{>n_0}$ is a spectral measure.

  (c) If we replace the set $C$ by a smaller set $C'$ which is contained in some cube $t_0 + [0,1]^d$ in the statement, then the result remains valid.
\end{remark}

By using Theorem \ref{key-theorem}, the proof of Theorem \ref{theorem-Rd-1} is similar to that of Theorem \ref{theorem-general-spectrality}.

\begin{theorem}\label{theorem-Rd-1-2}
  In $\R^d$, suppose that $\{(R_n, B_n)\}_{n=1}^\f \subseteq \mcal{D}_d$ is a sequence of admissible pairs.
  If there exists an admissible pair $(R,B)$ that occurs infinitely times in the sequence $\{(R_n,B_n)\}_{n=1}^\f$, and moreover, all diagonal elements of $R$ are greater than or equal to $d+1$,
  then the infinite convolution $\mu$ defined in \eqref{infinite-convolution} is a spectral measure with a spectrum in $\Z^d$.
\end{theorem}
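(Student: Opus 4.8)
The plan is to run the same argument as in the proof of Theorem~\ref{theorem-general-spectrality-2}, replacing the cube-based verification that the integral periodic zero set is empty by an appeal to Theorem~\ref{key-theorem}. Since $\{(R_n,B_n)\}_{n=1}^\f \sse \mcal{D}_d$, the infinite convolution $\mu$ exists by Theorem~1.1 in~\cite{LMW22}, and as every $R_n$ is diagonal with all diagonal entries at least $2$ we have $\|R_1^{-1}R_2^{-1}\cdots R_n^{-1}\| \le 2^{-n} \to 0$. Hence the standing hypotheses of Theorem~\ref{theorem-spectrality-emptyset} are met, and it suffices to exhibit a subsequence of $\{\nu_{>n}\}$ that converges weakly to some measure with empty integral periodic zero set.

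First I would set $\{n_1 < n_2 < \cdots\} = \{n \ge 1 : (R_n,B_n) = (R,B)\}$, which is infinite by hypothesis. For each $k$ the measure $\nu_{>n_k}$ is an infinite convolution generated by a tail sequence lying in $\mcal{D}_d$, so, as in \eqref{support-nu} applied to its partial convolutions, $\mathrm{spt}(\nu_{>n_k}) \sse [0,1]^d$. By weak compactness of the probability measures carried by the compact set $[0,1]^d$, pass to a subsequence along which $\nu_{>n_{k_j}}$ converges weakly to some $\rho \in \mcal{P}(\R^d)$ with $\mathrm{spt}(\rho) \sse [0,1]^d$. Exactly as observed in the proof of Theorem~\ref{theorem-general-spectrality-2}, since $(R_{n_{k_j}},B_{n_{k_j}}) = (R,B)$ we have $\nu_{>n_{k_j}-1} = \delta_{R^{-1}B} * (\nu_{>n_{k_j}} \circ R)$, and since push-forward by the fixed matrix $R$ and convolution with the fixed discrete measure $\delta_{R^{-1}B}$ are continuous for weak convergence (via Theorem~\ref{thm_wkcvg} at the level of Fourier transforms), the sequence $\{\nu_{>n_{k_j}-1}\}$ converges weakly to
\[ \nu \;:=\; \delta_{R^{-1}B} * (\rho \circ R) \;=\; (\delta_B * \rho)\circ R. \]

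Next I would apply Theorem~\ref{key-theorem} to the pair $(R,B) \in \mcal{D}_d$, whose diagonal entries are $\ge d+1$, with the uniform weight vector $p_b = 1/\#B$ (so that $\lambda = \delta_B$) and with the measure $\rho \in \mcal{M}(\R^d)$ supported on $[0,1]^d$; this produces a Borel set $E \sse \R^d$ with $\nu(E) > 0$ and $\nu(E+n) = 0$ for every $n \in \Z^d \sm \{{\bf 0}\}$. Theorem~\ref{theorem-emptyset} then gives $\mcal{Z}(\nu) = \emptyset$, and since $\{\nu_{>n_{k_j}-1}\}$ converges weakly to $\nu$ (note $\nu_{>0} = \mu$, so the index shift $n_k \mapsto n_k-1$ causes no boundary trouble), Theorem~\ref{theorem-spectrality-emptyset} yields that $\mu$ is a spectral measure with a spectrum in $\Z^d$.

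The genuinely hard input --- the combinatorial statement that a measure of the form $(\delta_B * \mu)\circ R$ with $R$ diagonal and all diagonal entries $\ge d+1$ carries a Borel set of positive measure all of whose nonzero integer translates are null --- has already been extracted as Theorem~\ref{key-theorem}, so what remains is essentially bookkeeping: the push-forward identities, the weak-continuity remarks, and the index shift. The one conceptual point worth flagging is why the threshold cannot be relaxed to ``$R$ diagonal with entries $\ge 2$'' (which by Corollary~\ref{coro-infinite-convolution} would still make every $\mcal{Z}(\nu_{>n})$ empty): the weak limit $\nu$ must be presented with a \emph{fixed}, finite front block of admissible pairs, and along a subsequence the only way to guarantee such a fixed front is to use a pair that literally repeats infinitely often, so that single pair is forced to carry the $d+1$ condition by itself.
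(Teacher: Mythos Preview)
Your proposal is correct and follows essentially the same approach as the paper's own proof: pass to a weakly convergent subsequence of $\{\nu_{>n_k}\}$ along the indices where $(R_n,B_n)=(R,B)$, shift by one to write the limit of $\{\nu_{>n_k-1}\}$ as $(\delta_B*\rho)\circ R$, and then invoke Theorem~\ref{key-theorem}, Theorem~\ref{theorem-emptyset}, and Theorem~\ref{theorem-spectrality-emptyset} in turn. Your write-up in fact supplies a few justifications (existence of $\mu$, the norm decay, weak-continuity of the push-forward and convolution) that the paper leaves implicit.
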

\begin{proof}
  Let \[ \{n_1, n_2, n_3, \ldots \} = \{n \ge 1: (R_{n},B_{n})=(R,B) \}, \] where $n_1 < n_2 < n_3 < \cdots$.
  Let $\{\nu_{>n}\}$ be defined in \eqref{ms_nugen}. Since $\{(R_n, B_n)\}_{n=1}^\f \subseteq \mcal{D}_d$, we have $\mathrm{spt}(\nu_{>n}) \sse [0,1]^d$ for all $n \ge 1$.
  By taking the subsequence of $\{n_k\}$, we may assume that $\{ \nu_{>n_k} \}$ converges weakly to a Borel probability measure $\rho$ with $\mathrm{spt}(\rho) \sse [0,1]^d$.
  Similarly, note that $\nu_{> n_k -1} = \delta_{R_{n_k}^{-1} B_{n_k}} * (\nu_{>n_k} \circ R_{n_k}) = \delta_{R^{-1} B} * (\nu_{>n_k} \circ R)$, and thus we have $\{\nu_{>n_k -1}\}$ converges weakly to $\nu = \delta_{R^{-1} B} * (\rho \circ R) = (\delta_B * \rho) \circ R$.
  By Theorem \ref{key-theorem} and Theorem \ref{theorem-emptyset}, we have $\mcal{Z}(\nu) =\emptyset$.
  By Theorem \ref{theorem-spectrality-emptyset}, $\mu$ is a spectral measure with a spectrum in $\Z^d$.
\end{proof}

The following lemma is needed to prove Theorem \ref{theorem-Rd-2}.
\begin{lemma}\label{lemma-uniform-approximation}
  For $f \in C_b(\R^d)$ and $\ep>0$, there exists $n_0 \ge 1$ such that for any sequence $\{(R_n,B_n)\}_{n=1}^\f \subseteq \mcal{D}_d$, we have
  \begin{equation*}
    \bigg| \int_{\R^d} f(x) \D\mu_{n_0}(x) - \int_{\R^d} f(x) \D\mu(x) \bigg| < \ep,
  \end{equation*}
  where $\mu$ is the infinite convolution defined in \eqref{infinite-convolution} and $\{\mu_n\}$ is defined in \eqref{ms_mun}.
\end{lemma}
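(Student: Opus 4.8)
The plan is to exploit two uniform features of $\mcal{D}_d$: every $R_n = \mathrm{diag}(m_1^{(n)},\dots,m_d^{(n)})$ has all diagonal entries at least $2$, so $\|R_n^{-1}\|\le 1/2$, and every digit set satisfies $R_n^{-1}B_n\sse[0,1)^d$. First I would record the resulting support estimates. Writing $P_k^{(i)} = \prod_{j=1}^k m_i^{(j)}$ (with $P_0^{(i)}=1$), the $i$-th coordinate of any point of $(R_k\cdots R_1)^{-1}B_k$ lies in the interval $[0,\,1/P_{k-1}^{(i)} - 1/P_k^{(i)}]$; since $\mathrm{spt}(\mu_n)$ is the sumset of these, summing over $k$ telescopes and gives, for every sequence in $\mcal{D}_d$ and every $n$,
\[ \mathrm{spt}(\mu_n) \sse \prod_{i=1}^d \big[0,\, 1 - 1/P_n^{(i)}\big] \sse [0,1)^d, \]
and hence $\mathrm{spt}(\mu)\sse[0,1]^d$ as well. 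Running the same telescoping from index $n_0+1$ onward gives, for the tail $\mu_{>n_0}\in\mcal{P}(\R^d)$,
\[ \mathrm{spt}(\mu_{>n_0}) \sse \prod_{i=1}^d \big[0,\, 1/P_{n_0}^{(i)}\big] \sse [0,2^{-n_0}]^d \sse \overline{U(\mathbf{0},\, \sqrt{d}\,2^{-n_0})}, \]
which is the crucial point: this bound is independent of the chosen sequence.

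Next I would use $\mu = \mu_{n_0}*\mu_{>n_0}$ together with the fact that $\mu_{>n_0}$ is a probability measure to write, via Fubini's theorem,
\[ \int_{\R^d} f \D\mu - \int_{\R^d} f \D\mu_{n_0} = \int_{\R^d}\!\int_{\R^d} \big(f(x+y)-f(x)\big)\D\mu_{n_0}(x)\,\D\mu_{>n_0}(y). \]
The integrand is nonzero only when $x\in\mathrm{spt}(\mu_{n_0})\sse[0,1]^d$ and $|y|\le\sqrt{d}\,2^{-n_0}$, so the left-hand side is bounded in absolute value by $\sup\{\,|f(x+y)-f(x)| : x\in[0,1]^d,\ |y|\le\sqrt{d}\,2^{-n_0}\,\}$. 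Since $f$ is uniformly continuous on the compact cube $[-1,2]^d$, given $\ep>0$ there is $\delta>0$ so that $|f(x+y)-f(x)|<\ep$ whenever $x\in[0,1]^d$ and $|y|<\delta$; choosing $n_0$ with $\sqrt{d}\,2^{-n_0}<\delta$ then finishes the proof, and this $n_0$ depends only on $f$ and $\ep$, not on the sequence $\{(R_n,B_n)\}$.

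The main thing to be careful about is precisely the uniformity over all sequences in $\mcal{D}_d$, but this is exactly what the telescoping bounds deliver, since the contraction factor $1/2$ and the inclusion $R_n^{-1}B_n\sse[0,1)^d$ hold simultaneously for every pair in $\mcal{D}_d$. That $f$ is only assumed bounded continuous on $\R^d$, rather than uniformly continuous, is harmless because every relevant argument of $f$ stays in the fixed compact set $[-1,2]^d$. I do not anticipate any serious obstacle beyond this bookkeeping.
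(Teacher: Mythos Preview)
Your proposal is correct and follows essentially the same approach as the paper: both use the support bounds $\mathrm{spt}(\mu_{n_0})\sse[0,1]^d$ and $\mathrm{spt}(\mu_{>n_0})\sse[0,2^{-n_0}]^d$ (which you justify in more detail via the telescoping argument), write $\mu=\mu_{n_0}*\mu_{>n_0}$, and then invoke uniform continuity of $f$ on a fixed compact cube to bound the double integral of $f(x+y)-f(x)$. The only cosmetic differences are that the paper uses $[0,2]^d$ rather than $[-1,2]^d$ for the uniform continuity and reverses the order of integration.
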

\begin{proof}
  Let $\{\nu_{>n}\}$ be defined in \eqref{ms_nugen}. Since $\{(R_n,B_n)\}_{n=1}^\f \subseteq \mcal{D}_d$, for each $n \ge 1$ we have \[ \mathrm{spt}(\mu_{n}) \sse[0,1]^d \text{ and } \mathrm{spt}( \nu_{>n} ) \sse [0 , 1]^d. \] Note that $\nu_{>n} = \mu_{>n} \circ (R_n \cdots R_2 R_1)^{-1}$. This implies that \[ \mathrm{spt}(\mu_{>n}) = (R_n \cdots R_2 R_1)^{-1} \mathrm{spt}(\nu_{>n}) \sse [0,2^{-n}]^d. \]

  For $f \in C_b(\R^d)$ and $\ep>0$, since $f$ is uniformly continuous on $[0,2]^d$, there exists $0< \gamma<1$ such that for all $x,y\in [0,2]^d$ with $|x-y| < \gamma$ we have
  \begin{equation}\label{uniform-continuous}
    |f(x)-f(y)|< \ep.
  \end{equation}
  Since
  \begin{align*}
    \int_{\R^d} f(x) \D \mu(x)
    & = \int_{\R^d} f(x) \D \mu_{n} * \mu_{>n}(x) \\
    & = \int_{\R^d} \int_{\R^d} f(x+y) \D \mu_{>n}(y) \D \mu_{n}(x),
  \end{align*}
  by choosing a sufficiently large integer $n_0$ such that $2^{-n_0} < \gamma/\sqrt{d}$ and  \eqref{uniform-continuous}, we have that
  \begin{align*}
    &\ \bigg| \int_{\R^d} f(x) \D\mu_{n_0}(x) - \int_{\R^d} f(x) \D\mu(x) \bigg| \\
    = &\ \bigg| \int_{\R^d} \int_{\R^d} \big( f(x) - f(x+y) \big) \D \mu_{>n_0}(y) \D \mu_{n_0}(x) \bigg| \\
   \le &\ \int_{[0,1]^d} \int_{ [0 , 2^{-n_0} ]^d} \big|  f(x) - f(x+y) \big| \D \mu_{>n_0}(y) \D \mu_{n_0}(x)   \\
    < &\ \ep,
  \end{align*}
  as desired.
\end{proof}

\begin{theorem}\label{theorem-Rd-2-2}
  In $\R^d$, given a sequence of admissible pairs $\{(R_n, B_n)\}_{n=1}^\f \subseteq \mcal{D}_d$, if
  \begin{equation}\label{finiteness}
    \sup_{n \ge 1} \|R_n\| < \f,
  \end{equation}
  then the infinite convolution $\mu$ defined in \eqref{infinite-convolution} is a spectral measure with a spectrum in $\Z^d$.
\end{theorem}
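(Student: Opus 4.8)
The plan is to reduce to Theorem~\ref{theorem-spectrality-emptyset}: I will exhibit a subsequence of $\{\nu_{>n}\}$ that converges weakly to an infinite convolution generated by a sequence in $\mcal{D}_d$, and then apply Corollary~\ref{coro-infinite-convolution}. The role of \eqref{finiteness} is to confine the data to a finite pool. Each $R_n=\mathrm{diag}(m^{(n)}_1,\dots,m^{(n)}_d)$ satisfies $\|R_n\|=\max_i m^{(n)}_i$, so $2\le m^{(n)}_i\le \sup_{n\ge1}\|R_n\|<\f$; hence there are only finitely many possible matrices $R_n$, and for each of them only finitely many admissible digit sets. Thus $\{(R_n,B_n)\}_{n\ge1}$ takes values in a finite set $\mcal{F}$ of admissible pairs, all lying in $\mcal{D}_d$. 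Since in $\mcal{D}_d$ one always has $\|R_1^{-1}\cdots R_n^{-1}\|\le 2^{-n}\to 0$, the contraction hypothesis used below is automatic, and $\mu$ exists (by \cite{LMW22}, or via Theorem~\ref{theorem-weak-convergence}).

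Next I would diagonalize. Each $\nu_{>n}$ is supported in $[0,1]^d$, and since $\mcal{F}$ is finite, a standard diagonal argument yields a subsequence $\{n_k\}$ along which, for every $j\ge1$, the pair $(R_{n_k+j},B_{n_k+j})$ is eventually equal to a fixed pair $(P^{(j)},C^{(j)})\in\mcal{F}\sse\mcal{D}_d$. Let $\tilde\nu$ be the infinite convolution generated by the limiting sequence $\{(P^{(j)},C^{(j)})\}_{j\ge1}$ (which exists, being a sequence in $\mcal{D}_d$). The heart of the argument is to prove $\nu_{>n_k}\to\tilde\nu$ weakly, and this is where Lemma~\ref{lemma-uniform-approximation} is indispensable: it provides, for each $f\in C_b(\R^d)$ and $\ep>0$, a truncation index $n_0$ valid uniformly over all sequences in $\mcal{D}_d$, such that the $n_0$-th partial convolution is within $\ep$ of the corresponding infinite convolution when integrated against $f$. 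Applying this to the shifted sequences $\{(R_{n_k+j},B_{n_k+j})\}_{j\ge1}$ and to $\{(P^{(j)},C^{(j)})\}_{j\ge1}$, and observing that for all large $k$ their $n_0$-th partial convolutions both coincide with the same explicit measure $\delta_{(P^{(1)})^{-1}C^{(1)}}*\cdots*\delta_{(P^{(n_0)}\cdots P^{(1)})^{-1}C^{(n_0)}}$ while their full infinite convolutions are $\nu_{>n_k}$ and $\tilde\nu$ respectively, one obtains $|\int f\,\mathrm{d}\nu_{>n_k}-\int f\,\mathrm{d}\tilde\nu|<2\ep$ for all large $k$; since $f$ and $\ep$ are arbitrary, this gives the claimed weak convergence.

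Finally, since $\{(P^{(j)},C^{(j)})\}_{j\ge1}\sse\mcal{D}_d$, Corollary~\ref{coro-infinite-convolution} gives $\mcal{Z}(\tilde\nu)=\emptyset$. Thus $\{\nu_{>n_k}\}$ is a subsequence of $\{\nu_{>n}\}$ converging weakly to a measure with empty integral periodic zero set, the contraction condition is met, and Theorem~\ref{theorem-spectrality-emptyset} yields that $\mu$ is a spectral measure with a spectrum in $\Z^d$. I expect the main obstacle to be exactly the weak-convergence identification $\nu_{>n_k}\to\tilde\nu$: without the uniformity over all $\mcal{D}_d$-sequences furnished by Lemma~\ref{lemma-uniform-approximation}, the tails of the $\nu_{>n_k}$ cannot be controlled simultaneously in $k$, and an arbitrary weak limit of the $\nu_{>n_k}$ might fail to be an infinite convolution of the precise form needed to invoke Corollary~\ref{coro-infinite-convolution}.
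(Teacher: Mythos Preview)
Your proposal is correct and follows essentially the same route as the paper: the paper phrases the extraction step as compactness of the symbolic space $\Omega=\{1,\dots,m\}^{\N}$ together with the left shift (so that $\nu_{>n}=\mu_{\sigma^n(\eta)}$ and a limit point $\zeta\in\Omega$ plays the role of your $\{(P^{(j)},C^{(j)})\}$), but this is exactly your diagonal argument in different clothing. The remaining steps---the uniform-truncation Lemma~\ref{lemma-uniform-approximation} to identify the weak limit, Corollary~\ref{coro-infinite-convolution} for $\mcal{Z}(\tilde\nu)=\emptyset$, and Theorem~\ref{theorem-spectrality-emptyset} to conclude---match the paper's proof line for line.
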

\begin{proof}
  Let $\mathcal{H} = \{ (R_n, B_n): n \ge 1 \}$.
  The assumption \eqref{finiteness} implies that the set $\mathcal{H}$ is finite, denoted by
  $$ \mathcal{H} = \{ (R'_1, B'_1), (R'_2, B'_2), \ldots, (R'_m, B'_m) \}. $$
  Let $\Omega = \{1,2,\ldots, m\}^\N$.
  For $\omega = (\omega_k)_{k=1}^\f \in \Omega$, let
  \[ \mu_\omega = \delta_{ (R'_{\omega_1})^{-1} B'_{\omega_1} } * \delta_{ (R'_{\omega_2}R'_{\omega_1})^{-1} B'_{\omega_2} } * \cdots * \delta_{ (R'_{\omega_k} \cdots R'_{\omega_2} R'_{\omega_1})^{-1} B'_{\omega_k} } * \cdots. \]
  For $q \ge 1$, we write
  \[ \mu_{\omega,q} = \delta_{ R_{\omega_1}^{-1} B_{\omega_1} } * \delta_{ (R_{\omega_2} R_{\omega_1})^{-1} B_{\omega_2} } * \cdots * \delta_{ (R_{\omega_q} \cdots R_{\omega_2} R_{\omega_1})^{-1} B_{\omega_q} }. \]

  For the infinite convolution $\mu$ defined in \eqref{infinite-convolution}, there exists $\eta \in \Omega$ such that $\mu = \mu_\eta$.
  Let $\{\nu_{>n}\}$ be defined in \eqref{ms_nugen}, and let $\sigma$ denote the left shift on $\Omega$.
  Then we have \[ \nu_{>n} = \mu_{\sigma^n(\eta)}. \]
  By the compactness of $\Omega$, there exists a subsequence $\{n_j\}$ of positive integers such that $\{\sigma^{n_j}(\eta)\}_{j=1}^\f$ converges to $\zeta$ in $\Omega$ for some $\zeta \in \Omega$.

  For $f \in C_b(\R^d)$ and $\ep >0$, by Lemma \ref{lemma-uniform-approximation}, there exists $q_0 \ge 1$ such that for all $j \ge 1$,
  $$ \bigg| \int_{\R^d} f(x) \D \mu_{ \sigma^{n_j}(\eta), q_0 }(x) -  \int_{\R^d} f(x) \D \mu_{ \sigma^{n_j}(\eta)} (x) \bigg| < \frac{\ep}{2}, $$
  and
  $$ \bigg| \int_{\R^d} f(x) \D \mu_{\zeta, q_0}(x) -  \int_{\R^d} f(x) \D \mu_{\zeta}(x) \bigg| < \frac{\ep}{2}. $$
  Since $\set{  \sigma^{n_j}(\eta) }_{j=1}^\f$ converges to $\zeta$,
  there exists $j_0 \ge 1$ such that for $j \ge j_0$, we have
  $$ \mu_{ \sigma^{n_j}(\eta), q_0 } = \mu_{\zeta, q_0}. $$
  Thus, it follows that for $j \ge j_0$,
  $$ \bigg| \int_{\R^d} f(x) \D \mu_{ \sigma^{n_j}(\eta)}(x) - \int_{\R^d} f(x) \D \mu_{\zeta}(x) \bigg| <\ep. $$
  Since $f$ and $\ep$ are arbitrary, this implies that the sequence $\{\nu_{>n_j} = \mu_{ \sigma^{n_j}(\eta)} \}$ converges weakly to $\mu_{ \zeta}$.

  By Corollary \ref{coro-infinite-convolution}, we have $\mathcal{Z}(\mu_\zeta) = \emptyset$.
  By Theorem \ref{theorem-spectrality-emptyset}, it follows that $\mu$ is a spectral measure with a spectrum in $\Z^d$.
\end{proof}

\section*{Acknowledgements}

Wenxia Li is supported by NSFC No. 12071148, 11971079 and Science and Technology Commission of Shanghai Municipality (STCSM)  No.~22DZ2229014..
Zhiqiang Wang is supported by Fundamental Research Funds for the Central Universities No.~YBNLTS2023-016.

\end{document}